\def\?[#1]{\textbf{[#1]}\marginpar{\Large{\textbf{??}}}}
\let\epsilon=\varepsilon 
\let\phi=\varphi
\newcommand{\OO}{{\mathcal O}}
\newcommand{\GG}{{\mathcal G}}
\newcommand{\RR}{{\mathbb R}}
\newcommand{\NN}{{\mathbb N}}
\newcommand{\CC}{{\mathbb C}}
\newcommand{\CI}{{{\mathcal C}^\infty}}
\newcommand{\CIc}{{{\mathcal C}^\infty_{\rm{c}}}}
\newcommand{\wbar}{\overline{w}}
\newcommand{\xbar}{\overline{x}}
\newcommand{\ybar}{\overline{y}}
\newcommand{\zbar}{\overline{z}}
\newcommand{\oo}[1]{\overline{#1}}
\newtheorem{prop}{Proposition}[section]
\newtheorem{thm}[prop]{Theorem}
\newtheorem{defi}[prop]{Definition}
\newtheorem{lem}[prop]{Lemma}
\newtheorem{corr}[prop]{Corollary}
\newtheorem{rem}[prop]{Remark}
\numberwithin{equation}{section}
\numberwithin{prop}{section}
\DeclareMathOperator{\dist}{dist}
\DeclareMathOperator{\Hess}{Hess}
\DeclareMathOperator{\Hol}{Hol}
\DeclareMathOperator{\neigh}{neigh}
\let\Im=\Imag
\DeclareMathOperator{\Real}{Re}
\let\Re=\Real
\DeclareMathOperator{\supp}{supp}
\title[Bergman projections with Gevrey weights]{Semiclassical asymptotics for Bergman projections with Gevrey weights}
\author[Xiong]{Haoren Xiong}
\email{haorenxiong@math.ucla.edu}
\address{Department of Mathematics, University of California,
	Los Angeles, CA 90095, USA}
\author[Xu]{Hang Xu}
\email{{xuhang9@mail.sysu.edu.cn}}
\address{School of Mathematics (Zhuhai), Sun Yat-sen University, Zhuhai, Guangdong 519082, China}
\thanks{The second author was supported in part by the NSFC grant No. 12201040.}
\begin{document}
	
	\begin{abstract}
		We extend the direct approach to the semiclassical asymptotics for Bergman projections, developed by Deleporte--Hitrik--Sj\"ostrand \cite{deleporte2022analytic} for real analytic exponential weights and Hitrik--Stone \cite{hitrik2022smooth} for smooth exponential weights, to the case of Gevrey weights. We prove that the amplitude of the asymptotic Bergman projection forms a Gevrey symbol whose asymptotic coefficients obey certain Gevrey-type growth rate, and it is constructed by an asymptotic inversion of an explicit Fourier integral operator up to a Gevrey-type small remainder. 
	\end{abstract}
	
	\maketitle

	\section{Introduction}
	Let $\Omega \subset \mathbb{C}^n$ be an open pseudoconvex domain and $\Phi\in\CI(\Omega)$ a strictly plurisubharmonic function defined on $\Omega$ (see \eqref{Phi strict plurisubharmonic}). Given a small parameter $h>0$, we consider the weighted $L^2$ space with respect to the Lebesgue measure $L(dx)$ on $\CC^n$:
	\begin{equation*}
		L^2(\Omega, e^{-2\Phi/h}L(dx))=\biggl\{f: \Omega \to \mathbb{C}\ \mbox{measurable} ; \int_{\Omega} |f(x)|^2 e^{-2\Phi(x)/h} L(dx)<\infty\biggr\},
	\end{equation*}
	and its closed subspace consisting of holomorphic functions (the Bergman space)
	\begin{equation*}
		H_{\Phi}(\Omega)=\Hol(\Omega)\cap L^2(\Omega, e^{-2\Phi/h}L(dx)).
	\end{equation*}
	
	The orthogonal projection $\Pi: L^2(\Omega, e^{-2\Phi/h}L(dx)) \rightarrow H_{\Phi}(\Omega)$ is known as the (weighted) Bergman projection. The \emph{Bergman kernel} $K(x, z)$ is a holomorphic function on $\Omega \times \rho(\Omega)$ (here we have written $\rho(\Omega)=\{x:\xbar\in\Omega\}$) such that $K(x,\ybar)e^{-2\Phi(y)/h}$ is the distribution (Schwartz) kernel of the orthogonal projection $\Pi$. 
	
	The asymptotic behavior of $K(x, \ybar)$ as $h\rightarrow 0^+$ plays a crucial role in complex geometry as $K(x, \ybar)$ serves as a local model of the Bergman kernel of a high power of a holomorphic line bundle over a compact complex manifold. In \cite{T90}, via studying the limit of $h\log \left(K(x, \xbar) e^{-2\Phi(x)/h}\right)$ as $h\rightarrow 0^+$, Tian proved the convergence of the Bergman metrics, which settled a conjecture by Yau \cite{Yau87}. A far-reaching generalization of Tian's result was obtained independently by Catlin \cite{Ca99} and Zelditch \cite{Ze98}, providing a complete asymptotic expansion of the Bergman kernel on the anti-diagonal as $h\to 0^+$,
	\begin{equation}\label{CaZe expansion eq}
		K(x, \xbar) e^{-2\Phi(x)/h} \sim \frac{1}{h^n} \left(a_0(x, \xbar)+ a_1(x, \xbar) h+ a_2(x, \xbar) h^2 +\cdots \right).
	\end{equation} 
	In \cite{Lu00}, Lu computed $a_j$ explicitly for $j\leq 3$ and showed that all the coefficients are polynomials of curvatures and their covariant derivatives with respect to the K\"ahler metric $\sqrt{-1}\partial\oo{\partial}\Phi$. 
	
	The original proof of \eqref{CaZe expansion eq} in \cite{Ca99,Ze98} relied on the Boutet de Monvel--Sj\"ostrand parametrix, a characterization of the singularity of the Bergman/Szeg\"o kernel (with no weight) near the boundary of a smoothly bounded strictly pseudoconvex domain in the seminal works \cite{BoSj76,Fe74}, and a reduction idea of Boutet de Monvel, Guillemin \cite{BoGu81}. Using again the reduction method, scaling asymptotics away from the diagonal were obtained later by Bleher, Shiffman, Zelditch \cite{BSZ00} and the full asymptotics by Charles \cite{charles2003berezin}, extending \eqref{CaZe expansion eq} to an off-diagonal asymptotic expansion, for $x, y$ in a small but fixed neighborhood of a point $x_0\in \Omega$, of the form 
	\begin{equation}\label{off-diagonal expansion eq}
		K(x, \ybar)=\frac{1}{h^n}e^{2\Psi(x, \ybar)/h} \sum_{j=0}^{N-1} a_j(x, \ybar)h^j + \OO(h^{N-n})e^{\left(\Phi(x)+\Phi(y)\right)/h},
	\end{equation}
	where $\Psi(x, z)$ and $a_j(x, z)$, respectively, are almost holomorphic extensions of $\Phi(x)$ and $a_j(x, \xbar)$. A direct approach to obtaining the asymptotics as in \eqref{off-diagonal expansion eq}, through constructing the local Bergman kernel based on the local reproducing property, was developed by Berman, Berndtsson and Sj\"ostrand \cite{BBSj08}. 
	
	In the case of real analytic weights $\Phi$, recent works by \cite{RoSjNg20, De21} (cf. \cite{HLX20, hezari2021property, Ch21}) showed that the infinite sum $\sum_{j=0}^{\infty} a_j(x, \ybar)h^j$ forms a classical analytic symbol, leading to an asymptotic expansion with an exponentially small error:	
	\begin{equation}\label{off-diagonal expansion analytic eq}
		K(x, \ybar)=\frac{1}{h^n}e^{2\Psi(x, \ybar)/h} \sum_{j=0}^{[(Ch)^{-1}]} a_j(x, \ybar)h^j + \OO(1)e^{-\delta/h} e^{\left(\Phi(x)+\Phi(y)\right)/h} ,
	\end{equation}
	where $C$ and $\delta$ are some positive constants, and $[(Ch)^{-1}]$ represents the greatest integer less than or equal to $(Ch)^{-1}$. 
	
	If we compare the asymptotic expansion for smooth weights in \eqref{off-diagonal expansion eq} to that for real analytic weights in \eqref{off-diagonal expansion analytic eq}, it is worth noting that the error term improves from a polynomial decaying rate to an exponentially decaying one. Motivated by these results, we aim to explore the expansion for Gevrey weights (see the definition below), which can be thought of as an interpolating case between the real analytic and smooth weights. 
	
	Let $s\geq 1$ and let $U\subset \RR^d$ be open. The Gevrey-$s$ class, $\GG^s(U)$, consists of all functions $u\in\CI(U)$ such that for every compact $K\subset U$,
	\begin{equation*}
		\label{Gevrey definition}
		\exists A_K,\,C_K>0,\quad\text{s.t. }\quad\forall\alpha\in\NN^d,\ x\in K,\quad |\partial^\alpha u(x) | \leq A_K C_K^{|\alpha|} \alpha!^s.
	\end{equation*}
	When $s=1$, $\GG^1(U)$ coincides with the class of real analytic functions on $U$; while for $s>1$, the Gevrey-$s$ class is non-quasianalytic, in the sense that $\GG_{\rm c}^s(U) := \GG^s(U)\cap\CIc(U) \neq \{0\}$.
	
	In this paper, we shall address the following two natural questions:
	\begin{itemize}
		\item[(a)] If $\Phi\in\GG^s(\Omega)$ with $s>1$, then does the infinite sum $\sum_{j=0}^{\infty} a_j(x, \ybar)h^j$ form a Gevrey symbol? If so, which Gevrey symbol class does it belong to?
		\item[(b)] If $\Phi\in\GG^s(\Omega)$ with $s>1$, what kind of estimate can be obtained for the error term in the asymptotic expansion?
	\end{itemize}
	Our main result is summarized as follows.
	\begin{thm}\label{main thm}
		Let $\Omega\subset \mathbb{C}^n$ be open and let $\Phi\in \mathcal{G}^s(\Omega; \mathbb{R})$ with $s>1$ be strictly plurisubharmonic in $\Omega$. For any $x_0\in \Omega$, there exist an elliptic symbol $a(x,\widetilde{y};h)\in\GG^s(\neigh((x_0,\overline{x_0}),\CC^{2n}))$ realizing the following formal $\GG^{s,2s-1}$ symbol (see Definition \ref{defi:formal Gevrey symbol})
		\begin{equation}
			\label{amplitude asymp aj_s}
			a(x,\widetilde{y};h) \sim \sum_{j=0}^{\infty} a_j(x,\widetilde{y}) h^j,
		\end{equation}
		in the sense of \eqref{amplitude a(x,y;h) Gevrey asymp}, with $a_j\in \GG^s(\neigh((x_0,\overline{x_0}),\CC^{2n}))$ being holomorphic to $\infty$--order along the anti-diagonal $\widetilde{y}=\xbar$, such that for some constant $C>0$,
		\begin{equation}
			\label{Aa=1 in main theorem}
			(Aa)(x, \xbar; h)=1 + \OO(1) \exp\bigl(-C^{-1}h^{-1/(2s-1)}\bigr), \quad x\in \neigh(x_0, \mathbb{C}^n),
		\end{equation}
		where $A$ is an elliptic Fourier integral operator, and small open neighborhoods $U\Subset V\Subset \Omega$ of $x_0$, with $C^{\infty}$ boundaries, such that the operator 
		\begin{equation}\label{Pi_V eq}
			\widetilde{\Pi}_Vu(x)=\frac{1}{h^n}\int_V e^{\frac{2}{h}\Psi(x, \ybar)} a(x, \ybar; h) u(y) e^{-\frac{2}{h}\Phi(y)} L(dy)
		\end{equation}
		satisfies 
		\begin{equation}\label{reproducing property eq}
			\widetilde{\Pi}_V-1=\OO(1)\exp\bigl(-C^{-1}h^{-1/(2s-1)}\bigr): H_{\Phi}(V)\rightarrow L^2(U, e^{-2\Phi/h}L(dx)).
		\end{equation}
		Here in \eqref{Pi_V eq}, the function $\Psi\in \mathcal{G}^s(V\times \oo{V})$ is holomorphic to $\infty$--order along the anti-diagonal $\widetilde{y}=\xbar$, $\Psi(x,\xbar)=\Phi(x)$, and $L(dy)$ is the Lebesgue measure on $\mathbb{C}^n$.
	\end{thm}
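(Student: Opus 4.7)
The plan is to follow the direct approach of Deleporte--Hitrik--Sjöstrand and Hitrik--Stone, adapted to the Gevrey setting, where the natural exponentially small scale becomes $\exp(-C^{-1}h^{-1/(2s-1)})$ in place of the analytic scale $\exp(-\delta/h)$ or the smooth scale $h^N$. The overall architecture is: (i) construct a Gevrey almost holomorphic extension of the phase, (ii) formally invert a stationary-phase-type Fourier integral operator $A$ to produce $a_j$ in a formal Gevrey symbol class, (iii) realize that formal symbol with an exponentially small remainder by optimal truncation, and (iv) combine these with a contour deformation to obtain the reproducing property \eqref{reproducing property eq}.

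First I would produce the phase $\Psi(x,\widetilde{y})$, an almost holomorphic extension of $\Phi$ off the anti-diagonal $\widetilde{y}=\overline{x}$, with $\Psi(x,\overline{x})=\Phi(x)$. Since $\Phi\in\GG^s$, a Dyn'kin-type construction yields $\Psi\in\GG^s(V\times\overline{V})$ holomorphic to infinite order along the anti-diagonal, with a Gevrey-small $\overline{\partial}$-defect of the form $|\overline{\partial}\Psi|\lesssim\exp(-c\,|\widetilde{y}-\overline{x}|^{-1/(s-1)})$. This Gevrey-small $\overline{\partial}$-defect is precisely the source of the improved error compared to the purely smooth case; the exponent $1/(s-1)$ in the distance-scale competes with the semiclassical scale $h$ and, after optimization, produces the $h^{-1/(2s-1)}$ rate.

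Next, plugging a holomorphic test $u\in H_\Phi(V)$ into \eqref{Pi_V eq} and applying complex stationary phase at the critical point $y=\overline{\widetilde{y}}$ formally reduces the reproducing property to an equation $Aa\sim 1$, where $A$ is an elliptic Fourier integral operator whose leading term is a non-vanishing multiple of the identity, and whose lower order pieces are explicit differential operators built from derivatives of $\Psi$. One solves this equation recursively: $a_0$ is determined by the principal symbol of $A$, and $a_{j+1}$ by applying $A_0^{-1}$ to a weighted sum of $A_k a_{j+1-k}$ for $k\geq 1$. The main technical step is to show that this recursion lives in the formal Gevrey class $\GG^{s,2s-1}$ of Definition \ref{defi:formal Gevrey symbol}, that is, one must establish estimates of the form
\begin{equation*}
|\partial^\alpha a_j(x,\widetilde{y})|\leq A\,C^{j+|\alpha|}\,(\alpha!)^s\,(j!)^{2s-1}
\end{equation*}
on a fixed polydisk about $(x_0,\overline{x_0})$. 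I expect this combinatorial growth control to be the hard part: the Gevrey-$s$ regularity of $\Phi$ contributes one factor of $(j!)^{s}$ through differentiation, while stationary phase and the inductive inversion contribute an additional $(j!)^{s-1}$ from the convolution/Fa\`a di Bruno combinatorics, and the two losses must be tracked sharply and shown not to exceed $(j!)^{2s-1}$ under iteration. A Cauchy-estimate/induction scheme on nested polydisks, in the spirit of the analytic calculus of Sjöstrand but with Gevrey weights $j!^{2s-1}$ inserted in the inductive hypothesis, should yield the bound.

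Once the formal $\GG^{s,2s-1}$ symbol is in hand, a Borel--Ritt type resummation with optimal truncation at $N\asymp (Ch)^{-1/(2s-1)}$, smoothed by a Gevrey cutoff in the summation index, produces a genuine Gevrey function $a(x,\widetilde{y};h)$ realizing \eqref{amplitude asymp aj_s} in the sense of \eqref{amplitude a(x,y;h) Gevrey asymp} and satisfying \eqref{Aa=1 in main theorem}. Finally, to upgrade \eqref{Aa=1 in main theorem} to the operator statement \eqref{reproducing property eq}, I would use a Sjöstrand-style contour deformation in the $y$-integral of \eqref{Pi_V eq}: on the anti-diagonal $\widetilde{y}=\overline{x}$ the complex stationary phase expansion recovers $Aa$, while the non-holomorphic contamination coming from $\overline{\partial}\Psi$ and the boundary contributions from restricting the integral to $V$ are, thanks to the strict plurisubharmonicity of $\Phi$ and the Gevrey-small $\overline{\partial}$-defect, bounded by $\exp(-C^{-1}h^{-1/(2s-1)})$ after choosing the contour shift and the cutoff scale in tandem. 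Combining these pieces gives \eqref{reproducing property eq}.
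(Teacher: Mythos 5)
Your proposal has the right overall architecture (Gevrey almost holomorphic extension of the phase, a recursion for the Bergman coefficients, Borel resummation with optimal truncation, a contour/Stokes argument plus an $L^2$ step to get \eqref{reproducing property eq}), but it goes wrong at the single step the paper identifies as the real difficulty: the $\GG^{s,2s-1}$ bound on the $a_j$. You propose to read off the $a_j$ from the FIO inversion $Aa\sim 1$ — so from the recursion \eqref{recursive formula HS} of \cite{hitrik2022smooth} — and then run a ``Cauchy-estimate/induction scheme on nested polydisks, in the spirit of the analytic calculus of Sj\"ostrand.'' The introduction of the paper explicitly declines this route: the Sj\"ostrand formal-norm theory that packages the requisite cancellations in the real-analytic case has no Gevrey counterpart in the literature, and the recursion \eqref{recursive formula HS} (and its relatives in \cite{En00,BBSj08}) ``seem to involve subtle cancellation, hindering the derivation of desired estimates.'' If you push Cauchy estimates naively through \eqref{recursive formula HS}, the $2\nu$ derivatives hitting $g^\mu a_{m-j}$ and the $\mu$-fold power of the cubic remainder $g$ produce factorial growth that does not obviously close at $j!^{2s-1}$ without exploiting cancellation, and no mechanism for that is supplied.

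The paper circumvents this with a \emph{different} recursion, not the one you wrote down. It applies $\widetilde{\Pi}_V$ to coherent states $k_x(y)=\exp(2\Psi(y,\xbar)/h-\Phi(x)/h)$, uses H\"ormander's $L^2$-estimate to show $k_x$ is $\OO(h^\infty)$-close to $H_\Phi(V)$, and runs complex stationary phase on the resulting \emph{single} $y$-integral at $y=x$, producing the Charles-type recursion \eqref{recursive formula eq}. The decisive structural feature, used in Lemma \ref{factor 2 lem} and Step 3–4 of Proposition \ref{BK upper bounds prop}, is that the support constraint $|\alpha_k|,|\beta_k|\geq 1$ in the derivative partitions of $\widetilde{g}_x^\mu$ yields an extra \emph{favorable} factor $\mu!^{-(2s-2)}$ in the estimate of $\partial^\gamma\partial^\delta(\partial^\alpha\partial^\beta \widetilde{g}_x^\mu|_{y=x})$, which exactly offsets the $(\nu!/\mu!)^{2s-1}$ blow-up and lets an elementary induction close at exponent $2s-1$ with no cancellation tracking. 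Nothing in your plan supplies this; you would need either to develop a Gevrey analogue of Sj\"ostrand's nested-neighborhood symbol theory (which the authors regard as open) or to switch recursions. Two subsidiary points: the paper also has to \emph{prove} a quantitative Gevrey complex stationary phase lemma (Theorem \ref{stationary phase lemma Gevrey lem}) with remainder $C^{k+1}k!^{2s-1}h^{k+d/2}$ — the existing Gevrey literature treats only real-valued phases via a Morse change of variables unavailable here — and the passage to \eqref{reproducing property eq} is not a direct contour shift of the $y$-integral in \eqref{Pi_V eq} but a weak reproducing property tested against coherent states (Theorem \ref{thm:weak reproducing}), a Stokes argument in $\CC^{4n}$ on doubled contours (Proposition \ref{prop:G_z}), and a weak-to-strong upgrade via auxiliary weights $\Phi_1,\Phi_2$ and H\"ormander's $L^2$-estimate.
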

	
	\begin{rem}
		The existence of an almost holomorphic extension $\Psi$ in $\mathcal{G}^s$, whose Gevrey order is the same as that of $\Phi$, is due to Carleson \cite{carleson1961universal} (see also \cite{GuedesFBIGevrey}). We shall review the relevant results in Section \ref{subsection:almost holomorphic extension}.  
	\end{rem}	
	
	Once the local approximate reproducing property in Theorem \ref{main thm} has been established, a
	global version, namely an approximation for the Bergman projection $\Pi$ or the Bergman kernel $K(x,y)$ (uniformly in any compact subset of $\Omega$) follows from the classical cut-and-paste arguments and, in particular, the H\"ormander's $L^2$--estimates for the $\overline{\partial}$--equations. Such arguments have already been developed carefully in \cite{RoSjNg20}, see also \cite{BBSj08}, \cite{deleporte2022analytic}, \cite{hitrik2022smooth}. For convenience of the reader, we present the following corollary to Theorem \ref{main thm}, showing that the distribution (Schwartz) kernel of the authentic orthogonal (Bergman) projection $\Pi$ is locally approximated by the kernel of the operator $\widetilde{\Pi}_V$ defined in \eqref{Pi_V eq}, up to a $\GG^s$--type small error.
	\begin{corr}\label{corr: asymp to exact Bergman}
		Let $\Omega\subset \mathbb{C}^n$ be a pseudoconvex open set and let $\Phi\in \mathcal{G}^s(\Omega; \mathbb{R})$ with $s>1$ be strictly plurisubharmonic in $\Omega$. Let $K(x,\ybar)e^{-2\Phi(y)/h}$ be the distribution kernel of the orthogonal (Bergman) projection $\Pi: L^2(\Omega, e^{-2\Phi/h}L(dx)) \to H_\Phi(\Omega)$. For any $x_0\in \Omega$, there exists an open neighborhood $\widetilde{U} \Subset U\Subset V \Subset\Omega$ of $x_0$, with $U, V$ as in Theorem \ref{main thm}, such that
		\begin{equation*}
			e^{-\frac{\Phi(x)}{h}} \Bigl(K(x,\ybar) - \frac{1}{h^n}e^{\frac{2}{h}\Psi(x,\ybar)} a(x,\ybar;h)\Bigr) e^{-\frac{\Phi(y)}{h}} = \OO(1) \exp\bigl(-C^{-1}h^{-1/(2s-1)}\bigr),\quad C>0,
		\end{equation*}
		uniformly for $x, y\in\widetilde{U}$. Here the phase $\Psi$ and the amplitude $a$ have been introduced in Theorem \ref{main thm}.
	\end{corr}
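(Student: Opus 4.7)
The plan is to reduce the corollary to Theorem \ref{main thm} by the standard cut-and-paste argument based on H\"ormander's weighted $L^2$--estimates for $\overline\partial$, as in the analytic setting of \cite{RoSjNg20}; see also \cite{BBSj08, deleporte2022analytic, hitrik2022smooth}. The only substantive modification is that the analytic errors of size $\OO(e^{-1/(Ch)})$ are everywhere replaced by the Gevrey-type errors of size $\OO(e^{-h^{-1/(2s-1)}/C})$ already produced by Theorem \ref{main thm}.

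Fix $x_0\in\Omega$ and pick nested open neighborhoods $\widetilde U \Subset U \Subset V \Subset \Omega$, with $U,V$ the neighborhoods from Theorem \ref{main thm}. Choose $\chi\in \CIc(V;[0,1])$ with $\chi\equiv 1$ near $\overline U$, and let $\widetilde\Pi$ denote the integral operator with Schwartz kernel
\[
\chi(x)\chi(y)\, h^{-n} e^{\frac{2}{h}\Psi(x,\overline y)} a(x,\overline y;h)\, e^{-\frac{2}{h}\Phi(y)}.
\]
The first step is to verify, for every $u\in H_\Phi(\Omega)$, the Gevrey-type decay
\[
e^{-\Phi(x)/h}\,\overline\partial_x(\widetilde\Pi u)(x) = \OO\bigl(e^{-h^{-1/(2s-1)}/C}\bigr)\|u\|_{H_\Phi},
\]
uniformly in $x$ in a neighborhood of $\overline{\widetilde U}$. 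In the region $\chi(x)=1$, the $\overline\partial_x$ hits only $\Psi$ and $a$, both of which are almost holomorphic of Gevrey-$s$ type along $\widetilde y=\overline x$; combining the resulting Gevrey decay with the quadratic suppression $2\Re\Psi(x,\overline y)-\Phi(x)-\Phi(y)\leq -c|x-y|^2$ and optimizing in the distance to the anti-diagonal produces exponential decay at the rate $h^{-1/(2s-1)}$. Contributions from $\supp\overline\partial\chi$ are strictly smaller, since $|x-y|$ is bounded below there.

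Second, H\"ormander's $L^2$--estimate on the pseudoconvex set $V$ with weight $2\Phi/h$ yields $v$ solving $\overline\partial v=\overline\partial_x(\widetilde\Pi u)$ with $\|v\|_{L^2_\Phi(V)}=\OO(e^{-h^{-1/(2s-1)}/C})\|u\|_{H_\Phi}$; hence $w:=\widetilde\Pi u-v\in H_\Phi(V)$, and \eqref{reproducing property eq} implies that on $\widetilde U$ the holomorphic function $w$ agrees with $u$ up to a Gevrey-type small remainder. Third, feeding $u=K(\cdot,\overline{y_0})$ for $y_0\in \widetilde U$ into this scheme, and converting $L^2_\Phi$-bounds into pointwise bounds by the submean-value property for weighted holomorphic functions on small balls, extracts the pointwise kernel comparison claimed in the corollary.

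The main obstacle is the first step: quantifying precisely how the Gevrey-type bounds on $\overline\partial_x\Psi$ and $\overline\partial_x a$ interact with the Gaussian weight $e^{-c|x-y|^2/h}$ so as to produce exactly the exponent $1/(2s-1)$, and not some worse exponent. This is the same optimization already carried out in the proof of Theorem \ref{main thm}, so once that bookkeeping is in place the rest is a direct transcription of the analytic argument of \cite{RoSjNg20}, with the Gevrey-type remainder throughout. A subtler routine point is the nested cut-and-paste: one must arrange the shrinking of neighborhoods so that the $\overline\partial\chi$ contribution strictly dominates the Gevrey-small main error and the H\"ormander correction $v$ does not reintroduce non-negligible contributions at the kernel level.
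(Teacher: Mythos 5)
Your plan is in the right spirit — the standard cut-and-paste argument with replacement of $\OO(e^{-1/Ch})$ errors by $\OO(e^{-C^{-1}h^{-1/(2s-1)}})$ — but as written it stops short of the decisive step and would not yield the pointwise kernel comparison.

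The reproducing property you obtain in steps 1–2 (that $\widetilde\Pi u\approx u$ pointwise on $\widetilde U$ for $u\in H_\Phi(\Omega)$, up to a Gevrey-type error) gives, after inserting $u = K(\cdot,\overline{y_0})$, only the integral identity
\[
\int \widetilde K_\chi(x,\zbar)\, K(z,\overline{y_0})\, e^{-\frac{2}{h}\Phi(z)}\, L(dz) \;=\; K(x,\overline{y_0}) \;+\; \OO\bigl(e^{-C^{-1}h^{-1/(2s-1)}}\bigr)\,e^{\frac{\Phi(x)+\Phi(y_0)}{h}},
\]
which still contains both $K$ and $\widetilde K_\chi$ under the integral sign; it is \emph{not} yet the claimed comparison $K \approx \widetilde K$. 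To extract that, you must recognize — via a complex conjugation and the Hermitian symmetry $K(x,\ybar)=\overline{K(y,\xbar)}$ — that the left-hand side equals $\overline{\Pi\bigl(\widehat K_\chi(\cdot,\xbar)\bigr)(y_0)}$, where $\widehat K_\chi(z,\xbar):=\overline{\widetilde K_\chi(x,\zbar)}$ is the conjugated, cutoff, almost-holomorphic kernel. One then needs a \emph{second}, independent $\overline\partial$-correction: solve $\overline\partial u_{y_0}=\overline\partial\bigl(\widehat K_\chi(\cdot,\xbar)\bigr)$ using H\"ormander's $L^2$-estimate on the \emph{global} pseudoconvex set $\Omega$ with the plurisubharmonic weight $\Phi$, bound $u_{y_0}$ in $L^2_\Phi$ by the Gevrey-size $\overline\partial$-term (coming from the flatness of $\overline\partial\Psi$, $\overline\partial a$ together with $\overline\partial\chi$), and then upgrade to a pointwise estimate via Proposition \ref{prop:L2 to pointwise}, concluding that $\Pi\bigl(\widehat K_\chi(\cdot,\xbar)\bigr)(y_0)\approx\widehat K_\chi(y_0,\xbar)$ for $x,y_0\in\widetilde U$. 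Your step 3 invokes only a sub-mean-value $L^2\to L^\infty$ conversion; this cannot supply the missing $\overline\partial$-argument, nor does it use the Hermitian symmetry needed to recast the integral as $\Pi$ of an almost-holomorphic section. Without this second step the proof is incomplete.

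A smaller remark: the holomorphization of $\widetilde\Pi u$ in your step 2 is a detour. Proposition \ref{prop:L2 to pointwise} is designed precisely to convert $L^2_\Phi$-bounds to pointwise bounds for functions $f$ with Gevrey-small $h\overline\partial f$, so you may apply it directly to $f = \widetilde\Pi u - u$ without first correcting $\widetilde\Pi u$ into a genuinely holomorphic function via a $\overline\partial$-solve. The only place in the argument where a H\"ormander $L^2$-estimate is genuinely needed is the second $\overline\partial$-correction described above, applied to $\widehat K_\chi(\cdot,\xbar)$.
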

	To align with \eqref{off-diagonal expansion eq} and \eqref{off-diagonal expansion analytic eq}, we have the following asymptotic expansion which follows from Theorem \ref{main thm} and Corollary \ref{corr: asymp to exact Bergman},
	\begin{rem}
		For any $x, y\in \widetilde{U}\Subset\Omega$, the Bergman kernel $K(x, \ybar)$ associated to the orthogonal projection $\Pi: L^2(\Omega, e^{-2\Phi/h}L(dx)) \to H_\Phi(\Omega)$ satisfies for some constants $C, \delta>0$,
		\begin{equation}\label{off-diagonal expansion Gevrey eq}
			K(x, \ybar) = \frac{1}{h^n} e^{\frac{2}{h}\Psi(x, \ybar)} \sum_{j=0}^{[(Ch)^{-\frac{1}{2s-1}}]} a_j(x, \ybar)h^j + \OO(1)e^{-\delta h^{-\frac{1}{2s-1}}} e^{\frac{\Phi(x)+\Phi(y)}{h}}.
		\end{equation}
		Here $\Psi$ and $a_j$'s are as in Theorem \ref{main thm}.
	\end{rem}
	
	By taking $s\rightarrow 1^{+}$, the asymptotic expansion \eqref{off-diagonal expansion Gevrey eq} formally recovers that for the real analytic weights. Moreover, the above results allow us to give satisfactory answers to the aforementioned questions. For question (a), the infinite sum $\sum_{j=0}^{\infty} a_j(x, \widetilde{y}) h^j$ is in fact a formal $\GG^{s,2s-1}$ Gevrey symbol. Furthermore, this result is optimal in the sense that there exists $\Phi\in\GG^s$ such that its corresponding Bergman kernel $\sum_{j=0}^{\infty} a_j h^j$ does not lie in the symbol class $\GG^{s,\sigma}$ for any $\sigma<2s-1$, as discussed in Remark \ref{optimality rmk}. For question (b), we have achieved an $\OO(e^{-\delta h^{-\frac{1}{2s-1}}})$ error in the asymptotic expansion \eqref{off-diagonal expansion Gevrey eq}, which is an interpolating result between the $\OO(e^{-\delta h^{-1}})$ error for real analytic weights and the $\OO(h^\infty)$ error for smooth weights.
	
	\begin{rem}
		We mention that our analysis still works if we replace the Lebesgue measure by a general one in the form of $\mu(x)L(dx)$ for some $\mu\in \mathcal{G}^s(\Omega)$. In this case, we can prove similar results for the Bergman kernel associated to the orthogonal projection $\Pi: L^2(\Omega, e^{-2\Phi/h}\mu\, L(dx)) \rightarrow \Hol(\Omega)\cap L^2(\Omega, e^{-2\Phi/h}\mu\, L(dx))$. 
		
		With this generalization, we can further adapt our analysis to the setting of Bergman kernels for tensor powers of a positive line bundle. Let $(L, h)\rightarrow M$ be a positive Hermitian line bundle over a compact complex manifold $M$ of dimension $n$ and let $\omega=\mathrm{Ric}(h)$ be the induced K\"ahler form on $M$. For any $k\in \mathbb{N}$, consider the Bergman kernel $K_k(x, \ybar)$ for the orthogonal projection $\Pi_k: L^2(M, L^k) \rightarrow H^0(M, L^k)$ with respect to the nature inner product induced by the metric $h^k$ and the volume form $\omega^n/n!$. By repeating the argument in the proof of Corollary \ref{corr: asymp to exact Bergman} (cf. \cite[Section 3]{BBSj08}), we can prove
		\begin{corr}
			Let $s>1$. Suppose $V\subset M$ is open and $h=e^{-2\Phi}$ for some $\Phi\in \mathcal{G}^s(V)$. For any $x_0\in V$, there exist a small open neighborhood $U\Subset V$ of $x_0$ and positive constants $C$ and $\delta$, such that for any $x, y\in U$, the Bergman kernel $K_k(x, \ybar)$ satisfies when $k\rightarrow \infty$,
			\begin{equation*}
				K_k(x, \ybar)=k^ne^{2k\Psi(x, \ybar)} \sum_{j=0}^{[(k/C)^{\frac{1}{2s-1}}]} \frac{a_j(x, \ybar)}{k^j} + \OO(1) e^{-\delta k^{1/(2s-1)}} e^{k\left(\Phi(x)+\Phi(y)\right)}.
			\end{equation*}
		\end{corr}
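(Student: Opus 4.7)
The plan is to mimic the proof of Corollary \ref{corr: asymp to exact Bergman}, transported to the line-bundle setting by a standard local trivialization, and to invoke Theorem \ref{main thm} (in its extension to measures of the form $\mu L(dx)$ with $\mu\in\GG^s$, recorded in the preceding remark) as the source of the Gevrey-scale local reproducing kernel.

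First, I would trivialize locally. Around $x_0$, pick a small open neighborhood $V'\Subset V$ with holomorphic coordinates $z$ and a holomorphic frame $e_L$ of $L$ on $V'$ satisfying $|e_L|_h^2=e^{-2\Phi(z)}$. A section $u=f\,e_L^{\otimes k}$ of $L^k$ then has local squared norm $|f|^2 e^{-2k\Phi}\mu\,L(dz)$, where $\mu\in\GG^s(V')$ is the positive density representing $\omega^n/n!$ in the chart. With $h=1/k$, this reduces the tensor-power problem on $V'$ to precisely the weighted Bergman framework of Theorem \ref{main thm} with the Gevrey density $\mu$.

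Second, I would apply that generalization to obtain a phase $\Psi\in\GG^s(V'\times\oo{V'})$, holomorphic to infinite order along the anti-diagonal with $\Psi(x,\xbar)=\Phi(x)$, and an amplitude $a(x,\widetilde y;1/k)\sim\sum_j a_j(x,\widetilde y)k^{-j}$ realizing a formal $\GG^{s,2s-1}$ symbol, such that the model operator $\widetilde\Pi_{V'}$ from \eqref{Pi_V eq} reproduces holomorphic sections on $V'$ up to a remainder of size $\OO(\exp(-\delta k^{1/(2s-1)}))$ in weighted $L^2$-norm on a smaller open set $U'\Subset V'$.

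Finally, I would globalize. Fix nested neighborhoods $U\Subset U'\Subset V'$ and a cutoff $\chi\in\CIc(V')$ with $\chi\equiv 1$ near $\oo{U'}$; here $U$ will be the neighborhood of the corollary statement. For a global holomorphic section $u\in H^0(M,L^k)$, form the local model $\widetilde u:=\widetilde\Pi_{V'}(\chi u)$ and correct it to a global holomorphic section $\widetilde u - v$ by solving $\oo\partial v=\oo\partial\widetilde u$ on $M$ via H\"ormander's $L^2$--estimates with the positive line-bundle weight. On $U$, the difference $u-(\widetilde u-v)$ contributes two errors: the local Gevrey error from step two, of size $\OO(\exp(-\delta k^{1/(2s-1)}))$, and a cut-off error supported on $\supp\oo\partial\chi$, controlled by the Gaussian decay of the model kernel dictated by the strict plurisubharmonicity of $(x,y)\mapsto\Phi(x)+\Phi(y)-2\Re\Psi(x,\ybar)$ away from the anti-diagonal, and therefore of size $\OO(e^{-\delta' k})$, subdominant. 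Converting these weighted $L^2$-bounds into pointwise kernel bounds on $U\times U$ by elliptic regularity for holomorphic functions and truncating the formal symbol at its optimal Gevrey index $[(k/C)^{1/(2s-1)}]$ then yields the stated expansion. The main obstacle lies in this last step: I must choose the nested neighborhoods, the cutoff, and the $\oo\partial$-weights so that the polynomial-in-$k$ losses of the solution operator and of the $L^2$--to--pointwise passage are fully absorbed by the exponentially small support factor, without degrading the dominant Gevrey-scale error from step two; the argument is a careful bookkeeping of constants along the lines of \cite[Section 3]{BBSj08} and Corollary \ref{corr: asymp to exact Bergman}.
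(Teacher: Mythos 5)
Your proposal is correct and follows essentially the same route as the paper's: a local trivialization of the positive line bundle reduces the problem to the weighted Bergman framework, where the extension of Theorem~\ref{main thm} with a $\GG^s$ density supplies the local reproducing kernel, and the exact Bergman kernel is then recovered by the cut-and-paste argument via H\"ormander's $L^2$--estimates, exactly as in Appendix~\ref{Appendix: prove corollary} and \cite[Section~3]{BBSj08}. The paper offers no further proof beyond a pointer to those references, and your sketch fills in the same steps, correctly identifying that the polynomial-in-$k$ losses from the $\overline{\partial}$--solution operator and the $L^2$--to--pointwise passage are absorbed into the dominant $\GG^s$--scale error $\exp(-\delta k^{1/(2s-1)})$.
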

		
		In particular, if we denote by $|K_k(x, \ybar)|_{h^k}$ the pointwise norm of $K_k(x, \ybar)$ with respect to the Hermitian metric $h^k$, then by repeating the computation in \cite[Corollary 1.2]{HLX20} one obtains $|K_k(x, \ybar)|_{h^k}=\frac{k^n}{\pi^n}e^{-kD(x,y)}\bigl(1+\OO(\frac{1}{k})\bigr)$ when $d(x,y)\leq \sqrt{\delta}k^{-\frac{s-1}{2s-1}}$, where $D(x,y)=\Phi(x)+\Phi(y)-2\Real\Psi(x, \ybar)$. This in fact settles \cite[Conjecture 1.6]{HX20} for Gevrey weights.
	\end{rem}
	When proving Theorem \ref{main thm}, we closely follow the framework established in \cite{deleporte2022analytic}. The method in \cite{deleporte2022analytic} gives a more direct route for proving \eqref{off-diagonal expansion analytic eq} in the context of real analytic weights. Notably, it avoids an implicit change of coordinates, involving the Kuranishi trick which transforms $\Psi$DOs expressed with non-standard phase to the standard one. We also mention that \cite{hitrik2022smooth}  implemented this approach and proved \eqref{off-diagonal expansion eq} for smooth weights. While this method is well-established, dealing with the Gevrey weights presents significant challenges. One primary challenge is to quantitatively estimate the growth rate of the Bergman coefficients $a_j$ in \eqref{amplitude asymp aj_s}. The work \cite{deleporte2022analytic} used the deep theory about analytic microlocal analysis in \cite{Sj82} to show that $\sum a_j h^j$ forms a classical analytic symbol. However, the counterpart theory in the setting of Gevrey weights is currently lacking in the literature, to the best of the authors' knowledge. In our work, we adopt a more elementary way to inductively estimate $a_j$ based on a recursive formula observed by L. Charles \cite{charles2003berezin} (see also \cite{hezari2021property}). While several other recursive formulas exist for the coefficients $a_j$ (see e.g., \cite{En00, BBSj08, hitrik2022smooth}), they seem to involve subtle cancellation, hindering the derivation of desired estimates. Throughout the proof, we have also extended some microlocal analysis tools to the Gevrey setting. Notably, we introduced the Gevrey Borel lemma in Proposition \ref{prop:Borel lemma Gevrey} and the Gevrey stationary phase method (Theorem \ref{stationary phase lemma Gevrey lem}). These extensions are believed to be of some independent interest.
	
	Our paper is organized as follows: In Section \ref{Sec Gevrey microlocal analysis}, we adapt some microlocal analysis tools to the Gevrey setting, including almost holomorphic extensions, Borel's theorem, and the complex stationary phase lemma, which serve as key ingredients in our analysis. Section \ref{Sec construction of the amplitude} provides a review of the framework established in \cite{deleporte2022analytic, hitrik2022smooth}. We complete the construction of the symbol $a(x, \widetilde{y}; h)$ by quantitatively estimating the growth rate of the Bergman coefficients. In Section \ref{Sec proof of the main theorem}, we establish the weak local reproducing property and utilize it to prove Theorem \ref{main thm} with the help of H\"ormander's $L^2$ estimates. Appendix \ref{Appendix: combinatorial facts} reviews some elementary combinatorial facts, which are used throughout, while Appendix \ref{Appendix: prove corollary} proves Corollary \ref{corr: asymp to exact Bergman}.
	
	
	The literature on Bergman kernels is vast. Before concluding the introduction, we mention that some more related work can be found in \cite{Ch03, En02, HX20, MM07, SZ02} and the references therein. The setting of Gevrey regularity in the study of (pseudo)differential operators and PDEs has attracted considerable attention in recent years. See for instance the works concerning pseudodifferential operators with Gevrey symbols in \cite{HLSjZe23}, FBI transform in Gevrey classes in \cite{GuedesFBIGevrey}, and Gevrey WKB method in \cite{Lascar2023gevrey}. 
	
	\noindent
	\textbf{Acknowledgment.} We are deeply grateful to Michael Hitrik for the encouragement and valuable suggestions throughout this work. Additionally, we would like to thank Laurent Charles and Alix Deleporte for their helpful comments and suggestions. The second author expresses sincere thanks to Hamid Hezari for the consistent support and many insightful discussions on related topics.

	\section{Microlocal analysis tools in Gevrey classes}\label{Sec Gevrey microlocal analysis}
	
	In this section, we extend several fundamental tools in microlocal analysis to the Gevrey setting. These include almost holomorphic extensions in the Gevrey class, Gevrey symbols and Borel's theorem, and the complex stationary phase method for Gevrey functions. For a comprehensive monograph on Gevrey microlocal analysis, we refer readers to \cite{Ro93}.
	
	\subsection{Almost holomorphic extensions of Gevrey functions}\label{subsection:almost holomorphic extension}
	
	Let $U\subset\RR^d$ be open and let $\widetilde{U}$ be a neighborhood of $U$ in $\CC^d$ such that $U = \widetilde{U}\cap\RR^d$. We recall that $\widetilde{f}\in\CI(\widetilde{U})$ is called an \emph{almost holomorphic extension} of $f\in\CI(U)$ if $\widetilde{f}$ coincides with $f$ on $U$ and $\overline{\partial}\widetilde{f}$ vanishes on $U$ to all orders, i.e., $\overline{\partial}\widetilde{f}$ is flat on $U$. It is a well-known fact, originally due to Carleson \cite[Theorem 2 and Example 2]{carleson1961universal}, that Gevrey functions admit almost holomorphic extensions within the same Gevrey class. This result will play a crucial role in our analysis. Before stating the precise version of the result on almost holomorphic extensions of Gevrey functions that we will use, let us first introduce some notions about Gevrey functions. 
	
	For every $R>0$, $V\subset U$, and $u\in\CI(U)$, we define the semi-norm 
	\begin{equation*}
		\label{Gevrey semi-norm}
		\|u\|_{s,R,V} := \sup_{\substack{x\in V ,\, \alpha\in\NN^d}} \frac{|\partial^\alpha u(x)|}{R^{|\alpha|} \alpha!^s}.
	\end{equation*}
	We also introduce the (global) Gevrey-$s$ class, $\GG_{\rm b}^s(U)\subset\GG^s(U)$, the space of functions $u\in\CI(U)$ for which we have
	\begin{equation}
		\label{Gevrey b estimate}
		\exists A,\,C >0,\quad\text{s.t.}\quad\forall\alpha\in\NN^d,\, x\in U,\quad | \partial^\alpha u(x)| \leq A C^{|\alpha|} \alpha!^s .
	\end{equation}
	
	Let us now recall the following version from \cite[Lemma 1.2]{GuedesFBIGevrey}:
	\begin{prop}
		\label{prop:Gevrey almost holo extension}
		Let $s>1$. Let $U$ be an open subset of $\RR^d$ and let $\widetilde{U}$ be a complex neighborhood of $U$ in $\CC^d$ such that $U = \widetilde{U}\cap\RR^d$. If $u\in\GG_{\rm b}^s(U)$, then $u$ has an almost holomorphic extension $\widetilde{u}\in\GG_{\rm b}^s(\widetilde{U})$. Moreover, there exist constants $C, M>0$ depending only on $U$, $\widetilde{U}$, $d$ and $s$, such that for each $R>0$,
		\begin{equation}
			\label{estimate almost holo ext Gevrey semi-norm}
			\text{if}\quad\|u\|_{s,R,U}<+\infty,\quad \text{then}\quad\|\widetilde{u}\|_{s,MR,\widetilde{U}} \leq C \|u\|_{s,R,U} .
		\end{equation}
	\end{prop}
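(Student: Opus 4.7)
The plan is to construct $\widetilde u$ as a Borel-type truncated Taylor expansion in the imaginary direction, with plateau cutoffs that themselves live in the Gevrey-$s$ class. Concretely, I would fix a non-quasianalytic plateau $\chi \in \GG_{\rm c}^s(\RR^d)$ with $\chi \equiv 1$ on a neighborhood of $0$ (available precisely because $s > 1$) and, for $x + iy$ ranging over $\widetilde U$, define
\begin{equation*}
\widetilde u(x+iy) = \sum_{\alpha \in \NN^d} \frac{(iy)^\alpha}{\alpha!}\, \partial^\alpha u(x)\, \chi\bigl(\lambda_{|\alpha|}\, y\bigr),
\end{equation*}
with $\lambda_N := c_0 (1+N)^{s-1}$ and $c_0 > 0$ to be fixed in terms only of $s$, $d$, and the width of $\widetilde U$ around $U$. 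The heuristic is that, since the formal Taylor series of a Gevrey-$s$ function diverges on any imaginary slab when $s > 1$, the cutoff must kill the $\alpha$-th term at scale $|y| \sim 1/\lambda_{|\alpha|}$, which is the largest scale on which the term remains summably small.

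Once the construction is in place, I would verify its three qualitative properties in turn. For \emph{smoothness} and convergence, note that for each fixed $y \neq 0$ only finitely many $\alpha$ yield $|y| \lesssim 1/\lambda_{|\alpha|}$, so the series is locally a finite sum; at $y = 0$ it reduces to $u(x)$. For the \emph{almost holomorphicity}, I would apply $\bar\partial_j = \tfrac12(\partial_{x_j} + i\partial_{y_j})$ to the series and invoke the standard Borel-telescoping identity: the pieces in which the differentiation hits either $\partial^\alpha u(x)$ or $(iy)^\alpha/\alpha!$ cancel across consecutive values of $\alpha$, leaving only the contributions in which a derivative falls on the cutoff $\chi(\lambda_{|\alpha|} y)$; these are supported in $\{|y| \gtrsim 1/\lambda_{|\alpha|}\}$, so that near $U$ they require $|\alpha| \to \infty$ as $|y| \to 0$, producing Gevrey-$s$-flat decay of the form $|\bar\partial \widetilde u(z)| \lesssim \exp(-c|\Im z|^{-1/(s-1)})$ on a neighborhood of $U$.

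For the \emph{Gevrey bound} \eqref{estimate almost holo ext Gevrey semi-norm}, I would apply a generic derivative $\partial^\gamma$, $\gamma \in \NN^{2d}$, to the series, distribute it via Leibniz across the three factors $(iy)^\alpha/\alpha!$, $\partial^\alpha u(x)$, and $\chi(\lambda_{|\alpha|} y)$, and collect absolute values using: the Gevrey bound $|\partial^{\alpha+\gamma_2} u(x)| \leq \|u\|_{s,R,U}\, R^{|\alpha|+|\gamma_2|}(\alpha+\gamma_2)!^s$; the Gevrey bound $|\chi^{(\gamma_3)}| \leq A\, B^{|\gamma_3|}\gamma_3!^s$ with a chain-rule factor $\lambda_{|\alpha|}^{|\gamma_3|}$; the support constraint $|y| \lesssim 1/\lambda_{|\alpha|}$; the multinomial inequality $(\alpha+\gamma)!^s \leq 2^{s(|\alpha|+|\gamma|)} \alpha!^s\, \gamma!^s$; and Stirling in the form $N^{(s-1)N} \leq C^N N!^{s-1}$. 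Matching the powers of $\lambda_{|\alpha|}$ against those of $|y|$ via the support constraint collapses the naive $\alpha$-sum into a convergent geometric series, provided $c_0$ is chosen small enough (uniformly in $R$), and the remaining factors rearrange to give a bound of the form $C\, \|u\|_{s,R,U}\, (MR)^{|\gamma|}\, \gamma!^s$.

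The main obstacle will be this last bookkeeping step: arranging the cutoff factor $\lambda_{|\alpha|}^{|\gamma_3|} \sim |\alpha|^{(s-1)|\gamma_3|}$, the Gevrey growth $(\alpha+\gamma)!^s$, and the support factor $|y|^{|\alpha|} \lesssim |\alpha|^{-(s-1)|\alpha|}$ to combine into a \emph{linear} estimate in $\|u\|_{s,R,U}$ with a dilation constant $M$ of the scale parameter independent of both $R$ and $u$. This requires a tight coupling between $c_0$, the Gevrey semi-norm of $\chi$, and the geometry of $\widetilde U$, driven by precise Stirling-type inequalities; the non-quasianalyticity hypothesis $s > 1$ enters crucially, since no analytic plateau $\chi$ would be available. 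Everything else — smoothness, agreement on $U$, and almost holomorphicity — is essentially formal, so the weight of the proof lies in this final Stirling-balanced estimate.
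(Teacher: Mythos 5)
The paper does not give a proof of Proposition \ref{prop:Gevrey almost holo extension}; it is quoted from \cite[Lemma 1.2]{GuedesFBIGevrey} (ultimately Carleson), so there is no in-paper argument to compare against. Your Borel-type truncated Taylor sum with Gevrey-$s$ plateau cutoffs $\chi(\lambda_{|\alpha|}y)$ is the standard construction used in that reference, and your description of the telescoping cancellation for $\overline{\partial}\widetilde u$ and of the Leibniz/Stirling bookkeeping are qualitatively correct.

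However, the truncation scale is mis-specified, and as stated the construction breaks down. You set $\lambda_N = c_0(1+N)^{s-1}$ with $c_0$ ``fixed in terms only of $s$, $d$, and the width of $\widetilde U$,'' and later say ``$c_0$ chosen small enough, uniformly in $R$.'' Two separate problems. First, the direction is reversed: since $\supp\chi(\lambda\,\cdot)\subset\{|y|\lesssim 1/\lambda\}$, making the series summable requires the cutoff to shrink the support of the $\alpha$-th term fast enough, which requires $\lambda_{|\alpha|}$, hence $c_0$, \emph{large}, not small. Second, and decisively, $c_0$ must scale linearly with $R$: on $\supp\chi(\lambda_{|\alpha|}\,\cdot)$ with $|\alpha|=N$ one has $|y|\leq 2/\lambda_N$, so
\[
\frac{|y|^{N}}{\alpha!}\,|\partial^\alpha u(x)|\ \leq\ \|u\|_{s,R,U}\,R^{N}\alpha!^{\,s-1}\Bigl(\frac{2}{c_0(1+N)^{s-1}}\Bigr)^{N}\ \lesssim\ \|u\|_{s,R,U}\Bigl(\frac{2Re^{1-s}}{c_0}\Bigr)^{N},
\]
which is geometrically summable only if $c_0\gtrsim R$. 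This is not cosmetic: choosing $\lambda_N = c_0R(1+N)^{s-1}$ with a \emph{universal} $c_0$ is precisely what makes $C$ and $M$ in \eqref{estimate almost holo ext Gevrey semi-norm} independent of $R$, because the factor $R^{|\alpha|}$ coming from $|\partial^{\alpha+\gamma_2}u|\leq\|u\|_{s,R,U}R^{|\alpha|+|\gamma_2|}(\alpha+\gamma_2)!^{s}$ is cancelled against $|y|^{|\alpha|}\lesssim\lambda_{|\alpha|}^{-|\alpha|}\sim R^{-|\alpha|}$ from the support constraint, leaving only the $R^{|\gamma_2|}$ needed for a slope $MR$ with $M$ depending only on $s$, $d$ and $\chi$. (The price is that $\widetilde u$ itself then depends on the chosen $R$; this is intrinsic to the Borel construction and is also how the proposition is applied in the paper, with one $R$ fixed across the whole family $\{a_j\}$.) With the scale corrected to $\lambda_N\sim RN^{s-1}$, the rest of your outline goes through.
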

	Knowing that $\widetilde{u}\in\GG_{\rm b}^s(\widetilde{U})$ is almost holomorphic along $\RR^d$ will allow one to measure the flatness of $\overline{\partial}\widetilde{u}$. In fact, we have the following:
	\begin{prop}
		\label{prop:flatness estimtate}
		Let $s, U, \widetilde{U}$ be as in Proposition \ref{prop:Gevrey almost holo extension}. Suppose that $u\in\GG_{\rm b}^s(U)$ satisfies $\|u\|_{s,R,U} < +\infty$ for some $R>0$. Let $\widetilde{u}\in \GG_{\rm b}^s(\widetilde{U})$ be an almost holomorphic extension of $u$ given in Proposition \ref{prop:Gevrey almost holo extension}, satisfying \eqref{estimate almost holo ext Gevrey semi-norm}. Then there exist constants $C=C(U,\widetilde{U})>0$ and $C_1 = C_1(U,\widetilde{U},R)>0$ such that
		\begin{equation*}
			\label{dbar tilde u estimate}
			\left| \overline{\partial} \widetilde{u}(z) \right| \leq C_1 \|u\|_{s,R,U}\exp\left(-C (R|\Im z|)^{-\frac{1}{s-1}}\right),\quad \mbox{for any } z\in\widetilde{U} .
		\end{equation*}
	\end{prop}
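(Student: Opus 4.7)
The plan is to exploit that $\overline{\partial}\widetilde{u}$ is flat along $U$ (by almost-holomorphicity, i.e.\ all its derivatives vanish on $U$) together with the Gevrey bounds \eqref{estimate almost holo ext Gevrey semi-norm} on the derivatives of $\widetilde{u}$, and to optimize a Taylor remainder expansion in the imaginary direction.

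First, I would reduce to the regime $x:=\Re z\in U$ with $|y|:=|\Im z|$ small enough that the segment $\{x+ity:t\in[0,1]\}$ remains inside $\widetilde{U}$. Outside this regime (either $x\notin U$ or $|y|$ bounded below in terms of $R$, $U$, $\widetilde{U}$), the right-hand side of the desired inequality is bounded below by a positive constant, so the bound follows at once from the plain Gevrey estimate on $\widetilde{u}$.

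Second, for such $z$ and any $N\geq 1$, applying Taylor's theorem with integral remainder to each component $\overline{\partial}_{z_j}\widetilde{u}$ along the segment from $x$ to $z$, and using that all Taylor coefficients of order strictly less than $N$ vanish by the flatness of $\overline{\partial}\widetilde{u}$ on $U$, I obtain
\begin{equation*}
\overline{\partial}_{z_j}\widetilde{u}(z) = \int_0^1 \frac{(1-t)^{N-1}}{(N-1)!}\biggl(\sum_{\ell=1}^d iy_\ell\,\partial_{y_\ell}\biggr)^{\!N}\overline{\partial}_{z_j}\widetilde{u}(x+ity)\,dt.
\end{equation*}
Expanding by the multinomial formula and invoking \eqref{estimate almost holo ext Gevrey semi-norm}, each derivative of $\overline{\partial}_{z_j}\widetilde{u}$ of order $|\alpha|=N$ is controlled by a constant times $\|u\|_{s,R,U}(MR)^{N+1}(N+1)^s\,\alpha!^s$, the extra $(N+1)^s$ coming from the additional $\overline{\partial}$-derivative. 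Combining this with the elementary combinatorial bound $\sum_{|\alpha|=N}\alpha!^{s-1}\leq (N+1)^{d-1}N!^{s-1}$ (of the type collected in Appendix~\ref{Appendix: combinatorial facts}) yields, for every $N\geq 1$,
\begin{equation*}
|\overline{\partial}\widetilde{u}(z)| \leq C\,\|u\|_{s,R,U}\,(MR)^{N+1}(N+1)^{s+d-1}\,N!^{s-1}\,|y|^N.
\end{equation*}

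Third, I would optimize this bound over $N$. By Stirling, the dominant factor $(MR|y|)^N N!^{s-1}$ behaves like $\bigl(MR|y|(N/e)^{s-1}\bigr)^{N}$, minimized near $N\asymp (R|y|)^{-1/(s-1)}$. Choosing $N=\lfloor c(R|y|)^{-1/(s-1)}\rfloor$ with $c>0$ sufficiently small (so that $M(c/e)^{s-1}<1$) produces geometric decay in $N$, which translates into the desired exponential decay $\exp\bigl(-C(R|y|)^{-1/(s-1)}\bigr)$; the polynomial pre-factors $(N+1)^{s+d-1}$ and the dependence on $R$ in the leading constant are absorbed into $C_1=C_1(U,\widetilde{U},R)$.

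The main technical subtlety is the careful balancing of $N!^{s-1}$ against $|y|^N$ that extracts the correct Gevrey exponent $1/(s-1)$ in the decay rate; this loss (relative to the analytic case $s=1$, in which the imaginary-direction Taylor series actually converges and produces $\exp(-c/|y|)$) is precisely what is reflected in the stated rate and what must be tracked quantitatively through the combinatorial estimate.
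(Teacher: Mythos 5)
Your argument is essentially the paper's: Taylor-expand $\overline{\partial}\widetilde{u}$ in the imaginary direction at a foot point in $U$, use the Gevrey semi-norm bound \eqref{estimate almost holo ext Gevrey semi-norm} on the $(N+1)$-th derivatives together with the flatness of $\overline{\partial}\widetilde{u}$ on $U$, and then minimize $C^N N!^{s-1}(R|\Im z|)^N$ over $N$ (which is precisely Lemma~\ref{lem:Gevrey minimize}); the bookkeeping of the extra $\overline{\partial}$-derivative, the multinomial sum and the $\binom{N+d-1}{d-1}$ count is correct. One thing to tighten: your reduction drops points $z\in\widetilde{U}$ with $\Re z\notin U$ as if the right-hand side were bounded below there, but if $|\Im z|$ is small for such $z$ it is not; one should instead Taylor-expand from a nearest point of $\overline{U}$, noting that for the extension domain produced by Proposition~\ref{prop:Gevrey almost holo extension} (or after harmlessly shrinking $\widetilde{U}$, which only adjusts the constants $C, C_1$) one has $\dist(z,\overline{U})=\OO(|\Im z|)$.
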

	This can be deduced from the $\GG^s$-type estimate \eqref{Gevrey b estimate} for $\widetilde{u}$, using Taylor's expansions of $\overline{\partial}u$ at $z=x\in U$ and Lemma \ref{lem:Gevrey minimize}. For a complete proof we refer to \cite[Remark 1.7]{GuedesFBIGevrey}. 
	
	\subsection{Gevrey symbols and Borel's theorem}
	\label{subsection:Borel lemma}
	\begin{defi}
		\label{defi:formal Gevrey symbol}
		Let $U\subset\RR^d$ be open and let $s,\,\sigma\geq 1$. Let $\{a_j\}_{j=0}^\infty$ be a sequence of functions in $\GG_{\rm b}^s(U)$. We say that the sequence $\{a_j\}_{j=0}^\infty$ is a formal $\GG^{s,\sigma}$ symbol on $U$ if there exists $C>0$ such that
		\begin{equation}
			\label{formal Gevrey symbol condition}
			\|\partial^\alpha a_j \|_{L^\infty(U)} \leq C^{1+|\alpha|+j} \alpha!^s j!^{\sigma}, \quad \mbox{for any } j\in\NN, \ \alpha\in\NN^d.
		\end{equation}  
	\end{defi}
	
	Let us prove a Gevrey version of Borel's theorem showing that formal $\GG^{s,\sigma}$ symbols can be realized as Gevrey functions in the following sense:
	\begin{prop}
		\label{prop:Borel lemma Gevrey}
		Let $U\subset\RR^d$ be open and let $s,\,\sigma\geq 1$. Suppose that $\{a_j\}_{j=0}^\infty$ is a formal $\GG^{s,\sigma}$ symbol satisfying \eqref{formal Gevrey symbol condition}. Then there exist $a=a(\cdot \,; h)\in\GG_{\rm b}^s(U)$ and $\widetilde{C}>0$ such that for any $\alpha\in\NN^d$, $N>0$ we have uniformly
		\begin{equation}
			\label{realize Gevrey symbol}
			\biggl\| \partial^\alpha \bigl(a - \sum_{j=0}^{N-1} a_j h^j\bigr) \biggr\|_{L^\infty(U)} \leq \widetilde{C}^{1+|\alpha|+N} \alpha!^s N!^\sigma h^N .
		\end{equation}
	\end{prop}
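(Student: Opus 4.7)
The plan is to carry out a Gevrey-refined version of the classical Borel construction, using a single smooth cutoff in the parameter $h$ together with carefully calibrated scales $\lambda_j$. Concretely, fix a cutoff $\chi\in\CIc(\RR;[0,1])$ with $\chi\equiv 1$ on $[-1/2, 1/2]$ and $\supp\chi\subset [-1,1]$, and set $\lambda_j := K(j+1)^\sigma$ for $j\in\NN$, where $K>0$ is to be chosen large compared to the constant $C$ from \eqref{formal Gevrey symbol condition}. Define
\begin{equation*}
a(x;h) := \sum_{j=0}^\infty \chi(\lambda_j h)\, a_j(x)\, h^j,\qquad x\in U,\ h\in(0,1].
\end{equation*}
Since $\chi(\lambda_j h)=0$ as soon as $h>1/\lambda_j$, this is a finite sum for each $h>0$. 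The exponent $\sigma$ in $\lambda_j$ is precisely what will later match the Gevrey-$\sigma$ rate in the remainder estimate.

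To see that $a(\cdot\,;h)\in\GG_{\rm b}^s(U)$ uniformly in $h\in(0,1]$, I would differentiate term by term and use the support condition on $\chi$: wherever $\chi(\lambda_j h)\neq 0$ one has $h\leq 1/\lambda_j$, and so, by Stirling,
\begin{equation*}
(Ch)^j j!^\sigma \leq C^j\bigl(K(j+1)^\sigma\bigr)^{-j}\, j!^\sigma \lesssim \bigl(C/(K e^\sigma)\bigr)^j (j+1)^\sigma.
\end{equation*}
For $K>C$ the series in $j$ converges geometrically, and after extracting the prefactor $C^{1+|\alpha|}\alpha!^s$ furnished by \eqref{formal Gevrey symbol condition} one obtains a bound of the desired form $A(C')^{|\alpha|}\alpha!^s$.

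For the asymptotic estimate \eqref{realize Gevrey symbol}, I would split
\begin{equation*}
a - \sum_{j=0}^{N-1} a_j h^j = R_1 + R_2,\qquad R_1 := \sum_{j=0}^{N-1}\bigl[\chi(\lambda_j h) - 1\bigr] a_j h^j,\quad R_2 := \sum_{j=N}^\infty \chi(\lambda_j h)\, a_j h^j.
\end{equation*}
The term $R_1$ vanishes identically in the regime $h\leq 1/(2\lambda_{N-1})$, since then $\lambda_j h\leq 1/2$ for every $j\leq N-1$; in the complementary regime $h^{-1}\leq 2 K N^\sigma$, I would write $h^j = h^N h^{-(N-j)}\leq h^N(2KN^\sigma)^{N-j}$ and apply the elementary inequality $N^k(N-k)!\leq e^N N!$ for $0\leq k\leq N$ (immediate from the monotonicity of $k\mapsto N^k(N-k)!/N!$ and Stirling at $k=N$), which converts the sum into $\widetilde{C}^{1+|\alpha|+N}\alpha!^s N!^\sigma h^N$. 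For $R_2$ I would factor out $C^{1+|\alpha|+N}\alpha!^s N!^\sigma h^N$; writing $k=j-N\geq 0$ and using $(j!/N!)^\sigma\leq(N+k)^{\sigma k}$ together with $h\leq 1/(K(N+k+1)^\sigma)$ on $\supp\chi(\lambda_j\,\cdot)$ bounds each summand by $(C/K)^k$, yielding a geometric series that sums to a constant whenever $K>2C$.

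The main bookkeeping is choosing $K$ (and the resulting $\widetilde{C}\sim\max(C, Ke^\sigma)$) so that the three estimates --- $\GG_{\rm b}^s$ regularity, the cutoff defect $R_1$, and the tail $R_2$ --- hold simultaneously; this is achieved by taking $K$ sufficiently large relative to $C$. There is no deep analytic input needed: the argument rests only on Stirling's formula and the inequality $N^k(N-k)!\leq e^N N!$, which together repackage the $j!^\sigma$ growth encoded in \eqref{formal Gevrey symbol condition} into the Gevrey-$\sigma$ rate $N!^\sigma h^N$ in the parameter $h$.
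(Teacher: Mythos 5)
Your proof is correct, and it takes a genuinely different route from the paper's. The paper constructs $a$ by a hard truncation $a(x;h) := \sum_{j=0}^{N_0-1} a_j(x) h^j$ with $N_0 = [(Ch)^{-1/\sigma}]$, and then estimates $\partial^\alpha\bigl(a - \sum_{j<N}a_j h^j\bigr)$ by a three-way case analysis depending on whether $N > N_0$, $N_0/2\leq N\leq N_0$, or $N < N_0/2$, using the monotonicity of $j\mapsto C^j j!^\sigma h^j$ (Lemma \ref{lem:Gevrey minimize}) and, in the last case, an AM--GM bound on $(N+1)\cdots(N+k)$. You instead carry out the classical Borel construction with a smooth cutoff $\chi(\lambda_j h)$ in the parameter and Gevrey-adapted scales $\lambda_j = K(j+1)^\sigma$, splitting the error into a cutoff defect $R_1$ (active only when $h^{-1}\lesssim KN^\sigma$) plus a tail $R_2$. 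Both arguments are elementary; yours is the more canonical Borel scheme and yields an $a$ that is smooth in $h$, whereas the paper's hard truncation is discontinuous in $h$ (the summation index jumps as $h$ crosses the thresholds where $N_0$ changes) but is arguably more explicit and avoids any auxiliary cutoff. The two inequalities you rely on --- $N^k(N-k)!\leq e^N N!$, following from monotonicity of $k\mapsto N^k(N-k)!/N!$ and Stirling at $k=N$, and $(N+k)!/N!\leq (N+k)^k$ --- are correct and do precisely the combinatorial work done in the paper by the monotonicity lemma and the AM--GM step.
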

	
	\begin{proof}
		For $h>0$, let us set $N_0 = [ (C h)^{-1/\sigma} ]$ and define
		\begin{equation}
			\label{eqn:a realizing a_j}
			a(x;h) := \sum_{j=0}^{N_0-1} a_j(x) h^j.
		\end{equation}
		If $N>N_0$, then $ a-\sum_{j=0}^{N-1} a_j h^j = -\sum_{j=N_0}^{N-1} a_j h^j$. It follows from \eqref{formal Gevrey symbol condition} that
		\[
		\biggl| \partial^\alpha \big(a - \sum_{j=0}^{N-1} a_j h^j\big) \biggr| \leq \sum_{j=N_0}^{N-1} |\partial^\alpha a_j| h^j \leq C^{1+|\alpha|} \alpha!^s \sum_{j=N_0}^{N-1} C^{j} j!^\sigma h^j .
		\]
		Recalling again \eqref{seq a_k decrease increase} that $C^{j} j!^\sigma h^j \leq C^{N} N!^\sigma h^N$ if $N_0\leq j\leq N-1$, we obtain therefore
		\[
		\biggl| \partial^\alpha \big(a - \sum_{j=0}^{N-1} a_j h^j\big) \biggr| \leq C^{1+|\alpha|+N} (N-N_0) \alpha!^s N!^\sigma h^N . 
		\]
		The inequality \eqref{realize Gevrey symbol} follows by letting $\widetilde{C} = 2C$, since $N < 2^N$.
		
		Let us consider the case where $N\leq N_0$. If further $N\geq N_0 /2$, the above strategy still applies. We note that $ a-\sum_{j=0}^{N-1} a_j h^j = \sum_{j=N}^{N_0-1} a_j h^j$, then by \eqref{formal Gevrey symbol condition},
		\[
		\biggl| \partial^\alpha \big(a - \sum_{j=0}^{N-1} a_j h^j\big) \biggr| \leq \sum_{j=N}^{N_0-1} |\partial^\alpha a_j| h^j \leq C^{1+|\alpha|} \alpha!^s \sum_{j=N}^{N_0-1} C^{j} j!^\sigma h^j .
		\]
		Recalling \eqref{seq a_k decrease increase} that $C^{j} j!^\sigma h^j \leq C^{N} N!^\sigma h^N$ when $N\leq j < N_0$, and noticing that $N_0 - N \leq N < 2^N$, we get
		\[
		\biggl| \partial^\alpha \big(a - \sum_{j=0}^{N-1} a_j h^j\big) \biggr| \leq C^{1+|\alpha|+N} (N_0-N) \alpha!^s N!^\sigma h^N \leq (2C)^{1+|\alpha|+N} \alpha!^s N!^\sigma h^N .
		\]
		It remains to justify \eqref{realize Gevrey symbol} when $N < N_0 /2$. We recall \eqref{eqn:a realizing a_j}, \eqref{formal Gevrey symbol condition}, and write
		\[
		\frac{1}{C^{1+|\alpha|} \alpha!^s} \biggl| \partial^\alpha \big(a - \sum_{j=0}^{N-1} a_j h^j\big) \biggr| \leq  \sum_{j=N}^{N_0-1} C^{j} j!^\sigma h^j 
		= C^{N} h^N \sum_{k=0}^{N_0-N-1} (C h)^k {(N+k)!^\sigma} .
		\]
		Using the AM--GM inequality: $n^{-1}(a_1+\cdots+a_n)\geq \sqrt[n]{a_1\cdots a_n}$, we get
		\[
		(N+1)\cdots(N+k) \leq ((2N+1+k)/2)^k \leq ((N+N_0)/2)^k,\quad 1\leq k \leq N_0-N-1 .
		\]
		Noting that $C h \leq N_0^{-\sigma}$, we obtain therefore, for $1\leq k\leq N_0-N-1$,
		\[
		(C h)^k (N+k)!^\sigma N! \leq N!^\sigma N_0^{-k\sigma} ((N+N_0)/2)^{k\sigma} = N!^\sigma ((N+N_0)/2N_0)^{k\sigma},
		\]
		and this holds trivially (as an equality) when $k=0$. Recalling $N<N_0 /2$, we conclude 
		\[
		\biggl| \partial^\alpha \big(a - \sum_{j=0}^{N-1} a_j h^j\big) \biggr| \leq C^{1+|\alpha|+N} \alpha!^s N!^\sigma h^N \sum_{k=0}^{N_0-N-1} (3/4)^{k\sigma} \leq 4 C^{1+|\alpha|+N} \alpha!^s N!^\sigma h^N,
		\]
		where we used $\sum (3/4)^{k\sigma} \leq \sum (3/4)^{k} = 4$. This proves \eqref{realize Gevrey symbol} by setting $\widetilde{C} = 4C$.
	\end{proof}
	
	\subsection{Complex stationary phase for Gevrey functions}\label{subsection:Gevrey stationary phase}
	
	The method of stationary phase is a fundamental tool in microlocal analysis. When both the phase and the amplitude have Gevrey regularity, one can obtain an asymptotic expansion in certain Gevrey symbol class, as defined in Definition \ref{defi:formal Gevrey symbol}. The method for real-valued Gevrey phase functions has been justified in \cite{GuedesFBIGevrey} and \cite{Lascar2023gevrey}, using the Morse lemma for coordinate change. However, this proof is not applicable when the phase function is complex-valued. In this section, we shall establish the method of stationary phase for the case where both the phase and the amplitude are complex-valued Gevrey functions, by adapting \cite[Theorem 7.7.5]{hormander1985analysis} to the Gevrey setting. This will serve as a key ingredient in the proof of Theorem \ref{thm:asymp invert A}.
	
	\begin{thm}\label{stationary phase lemma Gevrey lem}
		Let $s>1$, $k\in\NN$, and $h>0$ be a small parameter. Let $W\subset\RR^d$ be open. Suppose that $u, f\in \GG^s(W)$ and that $f(x_0)=0$, $f'(x_0)=0$, $\Im f''(x_0)$ is positive definite for $x_0\in W$. Then there exists a small open neighborhood $U\Subset W$ of $x_0\in W$ such that 
		\begin{equation}
			\label{Gevrey stationary phase}
			\biggl|\int_U e^{\frac{i}{h} f(x)} u(x)\,dx - \biggl(\det\Bigl(\frac{f''(x_0)}{2\pi i h}\Bigr)\biggr)^{-\frac{1}{2}}\sum_{j<k} h^j L_j u\biggr| 
			\leq C^{k+1} k!^{2s-1} h^{k+\frac{d}{2}}.
		\end{equation}
		Here the constant $C$ depends only on the dimension $d$ and the Gevrey constants in \eqref{Gevrey b estimate} for $u, f\in\GG_{\rm b}^s(U)$. Denoting by
		\begin{equation}
			\label{eqn:g_x_0 in Section 4}
			g_{x_0}(x) = f(x) - \frac{1}{2}f''(x_0)(x-x_0)\cdot (x-x_0),
		\end{equation}
		which vanishes of third order at $x_0$, we have
		\begin{equation}
			\label{eqn:Lj}
			L_j u = \sum_{\nu-\mu=j}\sum_{2\nu\geq 3\mu} \frac{i^{\mu+\nu}}{2^{\nu} \mu! \nu!} (f''(x_0)^{-1}\partial\cdot \partial)^\nu (g_{x_0}^\mu u)(x_0) .
		\end{equation}
	\end{thm}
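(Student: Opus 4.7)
The plan is to adapt H\"ormander's proof of the complex stationary phase formula (\cite[Theorem 7.7.5]{hormander1985analysis}) to the Gevrey setting, where the new ingredient is a careful tracking of constants leading to the sharp factor $k!^{2s-1}$. Since $s>1$, a Gevrey compactly supported cutoff $\chi\in\GG_{\rm c}^s(W)$ with $\chi\equiv 1$ near $x_0$ is available. The hypotheses $f(x_0)=f'(x_0)=0$ together with $\Im f''(x_0)>0$ give $\Im f(x)\gtrsim |x-x_0|^2$ on a small neighborhood $U\Subset W$ of $x_0$, hence $|e^{if/h}|\leq e^{-c|x-x_0|^2/h}$ on $U$. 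This reduces the problem to estimating $\int e^{if/h}\chi u\,dx$, since $\int e^{if/h}(1-\chi)u\,dx = \OO(e^{-1/Ch})$ is absorbed into the claimed remainder.

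Next, I would split the phase $f=q+g_{x_0}$ as in \eqref{eqn:g_x_0 in Section 4}, with $q(x)=\frac{1}{2}f''(x_0)(x-x_0)\cdot(x-x_0)$, and use the exact Fourier representation of the complex Gaussian $e^{iq/h}$ (legitimate since $\Im f''(x_0)>0$) to rewrite $\int e^{iq/h}v(x)\,dx$ as a constant multiple of $\int e^{-\frac{ih}{2}\langle f''(x_0)^{-1}\xi,\xi\rangle}\widehat{v}(\xi)\,d\xi$, applied with $v=e^{ig_{x_0}/h}\chi u$. Two simultaneous Taylor expansions follow: the Fourier-side exponential to order $N$, exploiting $\Re(-i\langle f''(x_0)^{-1}\xi,\xi\rangle)\leq 0$ for sharp remainder control, and the amplitude-side factor $e^{ig_{x_0}/h}=\sum_{\mu<M}(ig_{x_0})^\mu/(\mu!\,h^\mu)+r_M$. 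The resulting finite double sum indexed by $(\mu,\nu)$, restricted to $2\nu\geq 3\mu$ (since $g_{x_0}$ vanishes to order $3$ at $x_0$, the terms with $2\nu<3\mu$ vanish upon evaluation at $x_0$), reorganizes via the change of index $j=\nu-\mu$ into $\sum_{j<k}h^j L_j u$ with $L_j u$ given by \eqref{eqn:Lj}.

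The main obstacle is the remainder estimate yielding the factor $k!^{2s-1}$. Using Leibniz's rule for $\partial^\alpha(g_{x_0}^\mu u)(x_0)$ together with the Gevrey bounds $|\partial^\alpha u|,\,|\partial^\alpha g_{x_0}|\leq C^{|\alpha|}|\alpha|!^s$ and the superadditivity of factorials $a!\,b!\leq (a+b)!$, one first shows the pointwise bound
\begin{equation*}
\bigl|(f''(x_0)^{-1}\partial\cdot\partial)^\nu(g_{x_0}^\mu u)(x_0)\bigr|\leq C^{\mu+\nu}(2\nu)!^s .
\end{equation*}
Combining with the factor $\frac{1}{\mu!\,\nu!}$ from \eqref{eqn:Lj} and Stirling's inequality $(2\nu)!\leq 4^\nu(\nu!)^2$ gives $\frac{(2\nu)!^s}{\mu!\,\nu!}\leq C^\nu (\nu!)^{2s-1}/\mu!$. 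Summing over the admissible range $\mu\leq 2j$, $\nu=\mu+j$, and using $\nu!\leq C^j\,\mu!\,j!$ from the binomial estimate, one obtains $|L_j u|\leq C^j(j!)^{2s-1}$. An entirely parallel estimate, applied to the Taylor remainders of the two exponentials with $N$ and $M$ chosen comparable to $k$, yields the stated bound $C^{k+1}k!^{2s-1}h^{k+d/2}$. Thus the exponent $2s-1$ arises precisely as the balance between the Gevrey exponent $s$ forcing $(2\nu)!^s\sim(\nu!)^{2s}$ on the derivative side and the $1/\nu!$ saving built into the quadratic stationary phase expansion.
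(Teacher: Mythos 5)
Your decomposition and the formula \eqref{eqn:Lj} are right, but the claimed estimate $|L_ju|\leq C^j(j!)^{2s-1}$ does not follow from the chain of inequalities you wrote, and the remainder estimate inherits the same defect. Starting from $|(f''(x_0)^{-1}\partial\cdot\partial)^\nu(g_{x_0}^\mu u)(x_0)|\leq C^{\mu+\nu}(2\nu)!^s$ and $\frac{(2\nu)!^s}{\mu!\,\nu!}\leq C^\nu(\nu!)^{2s-1}/\mu!$, the substitution $\nu!\leq C^j\mu!\,j!$ for $\nu=\mu+j$ produces $C^{\mu+j}(\mu!)^{2s-2}(j!)^{2s-1}$. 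Because $2s-2>0$, the sum over $\mu\leq 2j$ is dominated by $\mu=2j$ and contributes roughly an extra $((2j)!)^{2s-2}\sim C^j(j!)^{4s-4}$, so your intermediate bound yields only $|L_ju|\lesssim C^j(j!)^{6s-5}$. The missing ingredient is the third-order vanishing of $g_{x_0}$ at $x_0$: in the Leibniz expansion of $\partial^\alpha(g_{x_0}^\mu u)(x_0)$, only partitions with every $|\alpha_\ell|\geq 3$ ($\ell\geq 1$) survive, and Corollary \ref{cor:factorial product max} then turns the product of factorials into $(|\alpha|-3\mu)!^{s-1}$ rather than $|\alpha|!^{s-1}$. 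Written out with $|\alpha|=2\nu=2j+2\mu$ this gives the $\mu!^{3-s}$ appearing in \eqref{estimate L_j u prep}, and the sum over $\mu$ carries $\mu!^{1-s}$, which is summable precisely because $s>1$. Superadditivity $a!\,b!\leq(a+b)!$ alone, as you invoke it, discards this gain.

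The remainder estimate has a parallel and more intractable difficulty. After moving to the Fourier side and Taylor-expanding $e^{-ihA^{-1}\xi\cdot\xi/2}$ to order $N$, the error is $\asymp\frac{h^{N}}{N!}\int|\xi|^{2N}|\widehat{g_{x_0}^\mu\chi u}(\xi)|\,d\xi$. A fixed Gevrey cutoff $\chi\in\GG^s_{\rm c}$ gives $|\widehat{g_{x_0}^\mu\chi u}(\xi)|\lesssim C^\mu e^{-c|\xi|^{1/s}}$ with $c$ independent of $\mu$, so the moment integral behaves like $\Gamma(2Ns)\sim C^N(N!)^{2s}$ and the per-$\mu$ remainder is $\sim C^{\mu+N}(N!)^{2s-1}h^{N}$; taking $N=k+\mu$ to absorb $h^{-\mu}$, and summing over $\mu< 2k$ (needed because $L_j$ contains contributions with $\mu\leq 2j$, $j<k$), the same surplus $(\mu!)^{2s-2}$ reappears and ruins the target $k!^{2s-1}$. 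Crucially, the vanishing of $g_{x_0}$ at $x_0$ is of no help here: the Fourier-side remainder is controlled by derivatives of $g_{x_0}^\mu u$ over the whole support of $\chi$, not at $x_0$. The paper's device is an $h$-dependent cutoff supported on $|x|\leq 2h^{(s-1)/(2s-1)}$ — see \eqref{supp rho} — so that on $\supp\rho$ one has $|x|<2k^{-(s-1)}$ whenever $k<h^{-1/(2s-1)}$; the ``low-order'' factors $\partial^{\beta_j}g_{x_0}(y)=\OO(|y|^{3-|\beta_j|})$ then supply negative powers of $k^{s-1}$ which kill the Gevrey factorial growth and lead to \eqref{R 2k+2mu pointwise bound}. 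Without this $h$-scaled localization (on the $x$-side, or an equivalent $\xi$-side truncation), ``an entirely parallel estimate'' does not give $k!^{2s-1}$.
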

	
	\begin{proof}
		For simplicity, we may assume that $x_0 = 0$ throughout the proof. Additionally, we will temporarily assume an extra technical condition $k < h^{-\frac{1}{2s-1}}$, and remove it at the end of the proof.
		
		We first note that under our assumptions on $f$, for a sufficiently small neighborhood $U$ of $x_0$, there exists $C_0>0$ such that
		\begin{equation}
			\label{Im f}
			\Im f(x)\geq C_0 |x|^2,\quad x\in U.
		\end{equation}
		
		$\bullet$ \emph{Introduce a cutoff.} Let us choose a cutoff function $\rho\in \CIc(U)$ such that
		\begin{equation}
			\label{supp rho}
			\rho(x) = 1,\ \text{when }|x|\leq h^{\frac{s-1}{2s-1}};\quad \rho(x)=0,\ \text{when }|x|\geq 2h^{\frac{s-1}{2s-1}}.
		\end{equation}
		It follows from \eqref{Im f} and \eqref{supp rho} that
		\begin{equation}
			\label{exp decay}
			\left|e^{\frac{i}{h} f(x)}\right| = e^{-\frac{\Im f(x)}{h}} \leq e^{-C_0 h^{\frac{2s-2}{2s-1}-1}} = \exp(-C_0 h^{-\frac{1}{2s-1}}),\quad x\in U\cap\supp(1-\rho).
		\end{equation}
		Therefore, for $0<h<1$, we derive from \eqref{exp decay} that
		\begin{equation}
			\label{eqn:outside cutoff}
			\begin{split}
				\left|\int_U e^{\frac{i}{h} f(x)}(1-\rho(x)) u(x)\,dx\right| &\leq \exp\left(-C_0 h^{-\frac{1}{2s-1}}/2\right) \|u\|_{L^\infty(U)} \int_{\RR^d} e^{-\frac{C_0 |x|^2}{2h}} dx \\
				&\leq C h^{d/2}\exp\left(-C_0 h^{-\frac{1}{2s-1}}/2\right).
			\end{split}
		\end{equation}
		
		$\bullet$ \emph{Deform the phase to a quadratic form.} Let us set for $0\leq\sigma \leq 1$,
		\[
		f_\sigma(x) = \frac{1}{2}f''(0)x\cdot x + \sigma g_{0}(x).
		\]
		We remark that $f_1(x) = f(x)$ and $f_0(x)$ is a quadratic form in $x$ with $\Im f_0 > 0$. In view of Taylor's expansion of $f(x)$ at $x=0$, we have $g_{0}(x) = \OO(|x|^3)$, and thus we may assume that $U$ is sufficiently small such that, by possibly shrinking $C_0$,
		\begin{equation}
			\label{Im f_sigma}
			\Im f_\sigma(x)\geq C_0|x|^2,\quad x\in U,\quad 0\leq \sigma \leq 1. 
		\end{equation}
		Let us now consider
		\[
		I(\sigma) := \int_U e^{\frac{i}{h} f_\sigma(x)} \rho(x) u(x)\,dx.
		\]
		Recalling Taylor's formula we have
		\[
		\biggl| I(1) - \sum_{\mu<2k} \frac{1}{\mu!} I^{(\mu)}(0) \biggr| \leq \frac{1}{(2k)!} \sup_{0\leq \sigma \leq 1} |I^{(2k)}(\sigma)|,
		\]
		where
		\[
		I^{(2k)}(\sigma) = (-1)^k h^{-2k} \int e^{\frac{i}{h}f_\sigma(x)}\rho(x) u(x) g_{0}^{2k}(x) dx.
		\]
		Since $|g_{0}(x)|\leq C|x|^3$ for $x\in U$, using \eqref{basic inequality 1} and \eqref{Im f_sigma} we get
		\[
		\begin{split}
			\left| e^{\frac{i}{2h}f_\sigma(x)} \rho(x)u(x) g_0^{2k}(x) \right| \leq C^{2k} |x|^{6k} e^{-\frac{C_0 |x|^2}{2h}} \|u\|_{L^\infty} \leq C^{2k} (C_0/2)^{-3k} \|u\|_{L^\infty} (3k)! h^{3k} .
		\end{split}
		\]
		It then follows that, in view of \eqref{Im f_sigma},
		\[
		\begin{split}
			\left|I^{(2k)}(\sigma)\right| \leq C^{2k} (C_0/2)^{-3k} \|u\|_{L^\infty} (3k)! h^{k} \int_{\RR^d} e^{-\frac{C_0|x|^2}{2h}} dx \leq C_1^{k+1} (3k)! h^{k+\frac{d}{2}} .
		\end{split}
		\]
		Therefore, noting that $\displaystyle \frac{(3k)!}{(2k)!k!} = \binom{3k}{k} \leq 2^{3k}$, we get, by redefining $C=2^3 C_1$,
		\begin{equation}
			\label{est:fix quadratic phase}
			\biggl| I(1) - \sum_{\mu<2k} \frac{1}{\mu!} I^{(\mu)}(0) \biggr| \leq C_1^{k+1} \frac{(3k)!}{(2k)!} h^{k+\frac{d}{2}} \leq C^{k+1} k! h^{k+\frac{d}{2}}.
		\end{equation}
		
		$\bullet$ \emph{Taylor expansion of $g_0^\mu u$: remainder.} Let us now fix $0\leq\mu<2k$ and write
		\[
		I^{(\mu)}(0) = i^{\mu} h^{-\mu} \int e^{\frac{i}{h}f_0(x)}\rho(x) u(x) g_{0}^\mu (x) dx.
		\]
		We introduce the Taylor expansion of order $2k+2\mu$ at $x=0$ of $g_{0}^\mu u$,
		\[
		T_{2k+2\mu}(g_{0}^\mu u)(x) = \sum_{|\alpha|<2k+2\mu} \frac{\partial^\alpha(g_{0}^\mu u)(0)}{\alpha!} x^\alpha,
		\]
		with the remainder $R_{2k+2\mu} (g_{0}^\mu u) =  g_{0}^\mu u - T_{2k+2\mu}(g_{0}^\mu u)$ given by
		\[
		R_{2k+2\mu} (g_{0}^\mu u)(x) = \sum_{|\beta|=2k+2\mu} \frac{|\beta|x^\beta}{\beta!} \int_0^1 (1-t)^{|\beta|-1}\partial^\beta(g_{0}^\mu u)(tx)dt. 
		\]
		It follows that
		\begin{equation}
			\label{remainder estimate}
			|R_{2k+2\mu} (g_{0}^\mu u)(x)|\leq |x|^{2k+2\mu} \sum_{|\beta|=2k+2\mu} \sup_{|y|\leq |x|} \left| \frac{\partial^\beta (g_{0}^\mu u) (y)}{\beta!} \right|.
		\end{equation}
		By the Leibniz rule we write
		\begin{equation}
			\label{Leibniz rule}
			\frac{\partial^\beta (g_{0}^\mu u)}{\beta!} = \sum_{\beta_0 + \cdots+ \beta_\mu = \beta} \frac{\partial^{\beta_0}u \partial^{\beta_1}g_{0} \cdots \partial^{\beta_\mu}g_{0} }{\beta_0! \beta_1! \cdots \beta_\mu!}.
		\end{equation}
		Recalling that $f\in \GG^s(W)\implies g_{0}\in \GG^s(W)$, and $g_{0}(x) = \OO(|x|^3)$, we have
		\begin{equation}
			\label{eqn:Dbetaj}
			\frac{|\partial^{\beta_j} g_{0}(x)|}{\beta_j !} \leq \begin{cases}
				C^{1+|\beta_j|}\beta_j!^{s-1},&\quad\text{if }|\beta_j|\geq 3; \\
				C |x|^{3-|\beta_j|},&\quad\text{if }|\beta_j|<3.
			\end{cases}
		\end{equation}
		Let us fix any $\beta_0,\beta_1,\cdots,\beta_\mu\in \NN^d$, with $|\beta_0|+\cdots + |\beta_\mu|=2k+2\mu$, and denote by $S=\{ 1\leq j\leq\mu : |\beta_j|< 3 \}$. Using \eqref{eqn:Dbetaj} and $u\in\GG^s(W)$ we obtain 
		\begin{equation}
			\label{estimtate product}
			\biggl| \frac{\partial^{\beta_0}u(y)}{\beta_0!} \prod_{j=1}^\mu \frac{\partial^{\beta_j}g_{0}(y)}{\beta_j!}\biggr| \leq C^{k+1} |\beta_0|!^{s-1} |y|^{\sum_{j\in S} (3-|\beta_j|)}\prod_{j\notin S} |\beta_j|!^{s-1}.
		\end{equation}
		We note that 
		\[
		\sum_{j\notin S}|\beta_j| = 2k+2\mu - |\beta_0| - \sum_{j\in S}|\beta_j|,\quad\text{and}\quad |\beta_j|\geq 3,\ j\notin S. 
		\]
		Using Corollary \ref{cor:factorial product max}, and noting that $\# \{1\leq j\leq\mu : j\notin S\} = \mu - |S|$, we have
		\begin{equation}
			\label{betaj factorials j notin S}
			\begin{split}
				\prod_{j\notin S}|\beta_j|! &\leq 3!^{\mu-|S|-1} \Bigl(2k+2\mu - |\beta_0| - \sum_{j\in S}|\beta_j| - 3(\mu-|S|-1) \Bigr)! \\
				&\leq 6^{2k} \Bigl(2k-\mu-|\beta_0|+3 + \sum_{j\in S}(3-|\beta_j|) \Bigr)!,
			\end{split}
		\end{equation}
		where we used $\mu < 2k$ in the last inequality. Recalling \eqref{supp rho} and the assumption $k< h^{-\frac{1}{2s-1}}$, we see that
		\[
		x\in\supp \rho \implies |x| < 2 k^{-(s-1)}.
		\]
		Writing $l_S:= \sum_{j\in S}(3-|\beta_j|)$ (thus $l_S\leq 3\mu \leq 6k-3$), we then deduce from \eqref{estimtate product} and \eqref{betaj factorials j notin S} that for any $|y|\leq |x|$ with $x\in \supp\rho$,
		\begin{equation}
			\label{estimate product 2}
			\begin{split}
				\biggl| \frac{\partial^{\beta_0}u(y)}{\beta_0!} \prod_{j=1}^\mu \frac{\partial^{\beta_j}g_{0}(y)}{\beta_j!}\biggr| &\leq C^{k+1}2^{6k}6^{2k(s-1)} \left(k^{-l_S} |\beta_0|! (2k-\mu-|\beta_0|+3+l_S)!\right)^{s-1} \\
				&\leq C^{k+1} \left(k^{-l_S}(2k-\mu +3 + l_S)!\right)^{s-1}.
			\end{split}
		\end{equation}
		Here we used a basic inequality $a!b!\leq (a+b)!$ and renamed $2^6 6^{2(s-1)} C$ to be $C$. Using the basic inequality $N!\leq N^N$, and $l_S \leq 6k-3$, we see that
		\[
		(2k-\mu +3 + l_S)! \leq (2k-\mu+3+l_S)^{2k-\mu +3 + l_S} \leq 8^{8k} k^{2k-\mu +3 + l_S}.
		\]
		Stirling's approximation \eqref{Stirling apprx} implies that $k^k < e^k k!$, noting that $k^3\leq 3^k$, we get 
		\[
		k^{-l_S}(2k-\mu +3 + l_S)! \leq 8^{8k}k^{2k-\mu+3} \leq (8^8 3)^k (k^k)^2 < (8^8 3 e^2)^k k!^2 .
		\]
		Therefore \eqref{estimate product 2} yields that, for any $|y|\leq|x|$ with $x\in\supp\rho$,
		\begin{equation}
			\label{estimtae product 3}
			\biggl| \frac{\partial^{\beta_0}u(y)}{\beta_0!} \prod_{j=1}^\mu \frac{\partial^{\beta_j}g_{0}(y)}{\beta_j!}\biggr| \leq C^{k+1} (8^8 3 e^2)^{(s-1)k} k!^{2s-2} \leq C^{k+1} k!^{2s-2} ,
		\end{equation}
		where we renamed $(8^8 3 e^2)^{s-1}C$ to be $C$ in the last step. By \eqref{partition of integer} and $\mu<2k$ we have 
		\[
		\#\{\beta\in\NN^d : |\beta|= 2k + 2\mu\} = \binom{2k+2\mu+d-1}{2k+2\mu} \leq 2^{2k+2\mu+d-1} \leq 2^{d-3} 2^{6k} .
		\]
		It follows from Proposition \ref{prop:partition multi-index counting} that the number of terms on the right side of \eqref{Leibniz rule} equals $\# P(\beta,\mu+1)$, which satisfies, noting that $|\beta|=2k+2\mu$ and $\mu<2k$,
		\[
		\# P(\beta,\mu+1) = \binom{\beta + \mu\mathbbm{1}}{\beta} \leq 2^{|\beta| + d\mu} = 2^{2k+(d+2)\mu}\leq 2^{-(d+2)} 2^{(2d+6)k}.
		\]
		Therefore, replacing the constant $C$ in \eqref{estimtae product 3} by $4^{-(d+6)}C$, we conclude from \eqref{remainder estimate}, \eqref{Leibniz rule}, and \eqref{estimtae product 3} that
		\begin{equation}
			\label{R 2k+2mu pointwise bound}
			|R_{2k+2\mu} (g_{0}^\mu u)(x)| \leq C^{k+1} k!^{2s-2} |x|^{2k+2\mu},\quad x\in\supp\rho.
		\end{equation}
		Applying \eqref{basic inequality 1} alongside \eqref{R 2k+2mu pointwise bound} we get
		\[
		\left| e^{-\frac{C_0 |x|^2}{2h}} \rho(x) R_{2k+2\mu} (g_{0}^\mu u)(x) \right| \leq C^{k+1} (2/C_0)^{k+\mu} k!^{2s-2} (k+\mu)! h^{k+\mu},
		\] 
		which implies that, writing $C_1 = 2^3 C/C_0^3$,
		\[
		\begin{split}
			\left|\int e^{\frac{i}{h}f_0(x)}\rho(x) R_{2k+2\mu} (g_{0}^\mu u)(x) \,dx\right| &\leq C_1^{k+1} k!^{2s-2} (k+\mu)! h^{k+\mu} \int_{\RR^d} e^{-\frac{C_0|x|^2}{2h}}dx \\
			&\leq C_1^{k+1} C_2 k!^{2s-2} (k+\mu)! h^{k+\mu+\frac{d}{2}}.
		\end{split}
		\]
		It follows that for any $0\leq\mu<2k$,
		\begin{equation}
			\label{est:restrict to Taylor polynomials}
			\begin{split}
				{ }&\quad \biggl| \frac{I^{(\mu)}(0)}{\mu!} - \frac{i^{\mu} h^{-\mu}}{\mu!}\int e^{\frac{i}{h}f_0(x)}\rho(x) T_{2k+2\mu} (g_{0}^\mu u)(x)\,dx \biggr| \\
				&\leq C_1^{k+1} C_2 k!^{2s-2} \frac{(k+\mu)!}{\mu!} h^{k+\frac{d}{2}} \leq C_1^{k+1} C_2 2^{k+\mu} k!^{2s-1} h^{k+\frac{d}{2}} = C^{k+1} k!^{2s-1} h^{k+\frac{d}{2}},
			\end{split}
		\end{equation}
		where we used the elementary inequality $\displaystyle \frac{(k+\mu)!}{k!\mu!} \leq 2^{k+\mu}$, and redefine $C = 2^3 C_1 C_2$.
		
		$\bullet$ \emph{Integrate Taylor polynomials with quadratic phase: removing the cutoff.} We shall now estimate the following integrals for each $0\leq \mu<2k$,
		\[
		\frac{i^{\mu} h^{-\mu}}{\mu!} \int e^{\frac{i}{h}f_0(x)}(1-\rho(x)) T_{2k+2\mu} (g_{0}^\mu u)(x)\,dx.
		\]
		In view of \eqref{supp rho} and \eqref{Im f_sigma} we have
		\begin{equation}
			\label{est:1-rho and exp terms}
			\left| e^{\frac{i}{3h} f_0(x)}(1-\rho(x)) \right|\leq \exp(-(C_0/3) h^{-\frac{1}{2s-1}}).
		\end{equation}
		Let us recall \eqref{Leibniz rule} and write		
		\begin{equation}
			\label{Leibniz 2}
			\frac{\partial^\alpha(g_{0}^\mu u)(0)}{\alpha!} = \sum_{\alpha_0 + \cdots+ \alpha_\mu = \alpha} \frac{\partial^{\alpha_0} u(0)}{\alpha_0!} \prod_{j=1}^\mu \frac{\partial^{\alpha_j} g_{0} (0)}{\alpha_j !}.
		\end{equation}
		Noting that $g_{0}(x) = \OO(|x|^3)$, letting
		\begin{equation}
			\label{restriction alpha_j}
			A_{\alpha,\mu} := \{(\alpha_0,\cdots,\alpha_\mu) : \alpha_0+\cdots+\alpha_\mu=\alpha,\ |\alpha_1|,\cdots,|\alpha_\mu|\geq 3\},
		\end{equation}
		we can therefore rewrite \eqref{Leibniz 2} as
		\begin{equation}
			\label{Leibniz 3}
			\frac{\partial^\alpha(g_{0}^\mu u)(0)}{\alpha!} = \sum_{A_{\alpha,\mu}} \frac{\partial^{\alpha_0} u(0)}{\alpha_0!} \prod_{j=1}^\mu \frac{\partial^{\alpha_j} g_{0} (0)}{\alpha_j !}.
		\end{equation}
		Recalling that $f\in\GG_{\rm b}^s(U)$, we obtain that, in view of \eqref{eqn:g_x_0 in Section 4}, there exist $A_f, C_f > 0$ such that 
		\begin{equation}
			\label{f(x_0) Gevrey estimates}
			|\partial^\beta g_0(0)| = |\partial^\beta f(0)| \leq A_f C_f^{|\beta|} \beta!^s,\quad |\beta|\geq 3.
		\end{equation}
		Similarly, since $u\in\GG_{\rm b}^s(U)$, there exist $A_u, C_u > 0$ such that
		\begin{equation}
			\label{u(x_0) Gevrey estimates}
			|\partial^\beta u (0)| \leq A_u C_u^{|\beta|} \beta!^s ,\quad \beta\in\NN^d .
		\end{equation}
		Combining \eqref{f(x_0) Gevrey estimates} and \eqref{u(x_0) Gevrey estimates}, we get from \eqref{Leibniz 3},
		\begin{equation}
			\label{Leibniz estimate}
			\frac{|\partial^\alpha(g_{0}^\mu u)(0)|}{\alpha!} \leq A_u A_f^\mu C_{u,f}^{|\alpha|}\sum_{A_{\alpha,\mu}} \bigl(\prod_{j=0}^\mu |\alpha_j|! \bigr)^{s-1} ,
		\end{equation}
		where $C_{u,f} := \max(C_u, C_f)$. It follows from \eqref{restriction alpha_j} and Corollary \ref{cor:factorial product max} that
		\[
		|\alpha_1|!\cdots|\alpha_\mu|! \leq 3!^{\mu-1} \left( |\alpha| - |\alpha_0| - 3(\mu-1)\right)! \leq 6^{\mu-1} 3^{|\alpha|-3\mu + 2} (|\alpha|-|\alpha_0|-3\mu)! .
		\]
		Here we have also used inequalities $(|\alpha|-3\mu + 1)(|\alpha|-3\mu + 3)<(|\alpha|-3\mu + 2)^2$ and $n^3\leq 3^n$, $n\in\NN$. We note by Proposition \ref{prop:partition multi-index counting} that $\# A_{\alpha,\mu} \leq \# P(\alpha,\mu+1)\leq 2^{|\alpha|+d\mu}$. We then conclude from \eqref{Leibniz estimate} and the above estimates that
		\begin{equation}
			\label{est:Taylor coeffs}
			|\partial^\alpha(g_{0}^\mu u)(0)|/\alpha! \leq A_u (2^d A_f)^\mu (2\times 3^{s-1}C_{u,f})^{|\alpha|} (|\alpha|-3\mu)!^{s-1} .
		\end{equation}
		Noting that $\#\{ \alpha\in\NN^d : |\alpha|=m \} \leq 2^{m + d -1}$ by \eqref{partition of integer}, we recall 
		\[
		T_{2k+2\mu}(g_{0}^\mu u)(x) = \sum_{m=3\mu}^{2k+2\mu-1} \sum_{|\alpha|=m} \frac{\partial^\alpha(g_{0}^\mu u)(0)}{\alpha!} x^\alpha,
		\]
		in view of \eqref{est:Taylor coeffs}, to obtain the following estimate, with $C = 4\times 3^{s-1}C_{u,f}$, 
		\begin{equation}
			\label{est:Taylor 2k+2mu th}
			\left|T_{2k+2\mu}(g_{0}^\mu u)(x)\right| \leq 2^{d-1} A_u (2^d A_f)^\mu \sum_{m=3\mu}^{2k+2\mu-1} C^m (m-3\mu)!^{s-1} |x|^m .
		\end{equation}
		Applying \eqref{basic inequality 1} with \eqref{Im f_sigma} we have
		\begin{equation}
			\label{est:(x-x_0)alpha}
			\left|e^{\frac{i}{2h} f_0(x)} |x|^{m} \right| \leq C_0^{-m/2} m!^{1/2}  h^{m/2} .
		\end{equation}
		Combing \eqref{est:Taylor 2k+2mu th} and \eqref{est:(x-x_0)alpha}, we obtain, with $\widetilde{A}_u = 2^d A_u$, $\widetilde{A}_f = 2^d A_f$,
		\[
		\frac{h^{-\mu}}{\mu!}\left|e^{\frac{i}{2h} f_0(x)}T_{2k+2\mu}(g_{0}^\mu u)(x)\right| \leq \widetilde{A}_u \widetilde{A}_f^\mu\sum_{m=3\mu}^{2k+2\mu-1} \frac{C^m}{C_0^{m/2}} h^{\frac{m}{2}-\mu} (m-3\mu)!^{s-1} \frac{m!^{1/2}}{\mu!}.
		\]
		Using the inequality $\displaystyle \frac{m!}{\mu!\mu!(m-2\mu)!} \leq 3^m$, and recalling $\mu<2k$, we have
		\begin{equation}
			\label{est:T 2k+2mu part 1}
			\begin{split}
				\frac{h^{-\mu}}{\mu!}\left|e^{\frac{i}{2h} f_0(x)}T_{2k+2\mu}(g_{0}^\mu u)(x)\right| &\leq \widetilde{A}_u \widetilde{A}_f^\mu \sum_{m=3\mu}^{2k+2\mu-1} \bigl(\frac{3C^2}{C_0}\bigr)^{\frac{m}{2}} h^{\frac{m}{2}-\mu} (m-2\mu)!^{s-\frac{1}{2}} \\
				&\leq (27 A^2 C^6 C_0^{-3} )^{k} \sum_{\ell =\mu}^{2k-1} \left(h^{\ell} \ell!^{2s-1}\right)^{1/2}.
			\end{split}
		\end{equation}
		Here we write $A=\max(\widetilde{A}_u,\widetilde{A}_f)$. It follows from \eqref{seq a_k decrease increase} that, for each $\mu\leq \ell <2k$,
		\begin{equation}
			\label{est:T 2k+2mu part 2}
			h^{\ell} \ell!^{2s-1}\leq \max(1,h^{2k}(2k)!^{2s-1}).
		\end{equation}
		In view of \eqref{est:T 2k+2mu part 1} and \eqref{est:T 2k+2mu part 2}, noting that $(2k)!\leq 2^{2k} k!^2$, we obtain
		\begin{equation}
			\label{est:T 2k+2mu final}
			\frac{h^{-\mu}}{\mu!}\left|e^{\frac{i}{2h} f_0(x)}T_{2k+2\mu}(g_{0}^\mu u)(x)\right| \leq 2k(27 A^2 C^6 C_0^{-3} )^{k} \max(1, 2^{k(2s-1)} k!^{2s-1} h^k).
		\end{equation}
		By \eqref{basic inequality 2}, we deduce from \eqref{est:1-rho and exp terms} that
		\begin{equation}
			\label{est:1-rho exp final}
			\left|e^{\frac{i}{3h} f_0(x)}(1-\rho(x)) \right|\leq \min(1,C_0'^k k!^{2s-1} h^k),\quad C_0' = \left((6s-3)C_0^{-1}\right)^{2s-1}.
		\end{equation}
		Furthermore, let us note that by \eqref{Im f_sigma},
		\[
		\int_{\RR^d} \left|e^{\frac{i}{6h} f_0(x)}\right| dx \leq \int_{\RR^d} e^{-\frac{C_0 |x|^2}{6h}} dx \leq C_1 h^{\frac{d}{2}} .
		\]
		Decomposing $ e^{\frac{i}{h}f_0(x)} = e^{\frac{i}{2h}f_0(x)} e^{\frac{i}{3h}f_0(x)} e^{\frac{i}{6h}f_0(x)}$, we combine the inequality above with \eqref{est:T 2k+2mu final} and \eqref{est:1-rho exp final} to conclude that, for each $0\leq \mu<2k$,
		\begin{equation}
			\label{est:drop cutoff}
			\left|\frac{i^{\mu} h^{-\mu}}{\mu!} \int e^{\frac{i}{h}f_0(x)}(1-\rho(x)) T_{2k+2\mu} (g_{0}^\mu u)(x)\,dx\right| \leq C^{k+1} k!^{2s-1} h^{k+\frac{d}{2}} ,
		\end{equation}
		where we renamed $ 54 A^2 C^6 C_0^{-3} C_1 \max\left(2^{2s-1},C_0'\right) $ to be $C$.
		
		$\bullet$ \emph{Integrate Taylor polynomials with quadratic phase: computation.}
		In view of \eqref{est:restrict to Taylor polynomials} and \eqref{est:drop cutoff}, we have shown that, for each $0\leq\mu<2k$,
		\begin{equation}
			\label{Taylor without cutoff}
			\biggl| \frac{I^{(\mu)}(0)}{\mu!} - \frac{i^{\mu} h^{-\mu}}{\mu!}\int_{\RR^d} e^{\frac{i}{h}f_0(x)} T_{2k+2\mu} (g_{0}^\mu u)(x) dx \biggr| \leq C^{k+1} (k!)^{2s-1} h^{k+\frac{d}{2}}.
		\end{equation}
		Noting that $f_0(x) = \frac{1}{2}f''(0)x\cdot x$ is quadratic, and that $T_{2k+2\mu}(g_{0}^\mu u)$ is a polynomial in $x$, the integral in \eqref{Taylor without cutoff} can be computed explicitly. To illustrate that, we shall give a brief proof of the following result, see also \cite[Lemma 2.2]{Sj82}:
		\begin{prop}
			\label{prop:exact integral}
			Let $A$ be a symmetric non-degenerate matrix with $\Im A$ positive definite. Let $ P_m (x) = \sum_{|\alpha|=m} c_\alpha x^\alpha$ be a homogeneous polynomial of degree $m$. Then 
			\[
			\int_{\RR^d} e^{\frac{i Ax\cdot x}{2}} P_m(x)\,dx = \frac{(\det( A/2\pi i))^{-\frac{1}{2}}}{2^\ell \ell!} (i A^{-1}\partial\cdot \partial)^{\ell} P_m ,\quad m=2\ell,\ \ell\in\NN .
			\]
			We remark that the above integral equals $0$ if $m\in\NN$ is odd.
		\end{prop}
		
		\begin{proof}
			Let us recall the Euler's homogeneous equation: $P_m = \frac{1}{m}\sum x_j \partial_{x_j} P_m$, then
			\[
			\int_{\RR^d} e^{\frac{i Ax\cdot x}{2}} P_m\,dx = \frac{1}{m}\int_{\RR^d} e^{\frac{i Ax\cdot x}{2}} \sum x_j \partial_{x_j} P_m \,dx= \frac{1}{m} \int_{\RR^d} \sum  -\partial_{x_j}\bigl(x_j e^{\frac{i Ax\cdot x}{2}}\bigr) P_m \,dx.
			\]
			We remark that the positive definiteness of $\Im A$ justifies all the integrations by parts throughout the proof. By a direct calculation, we get
			\[
			\sum  -\partial_{x_j}\left(x_j e^{\frac{i Ax\cdot x}{2}}\right) = (i A^{-1}\partial\cdot \partial) \left(e^{\frac{i Ax\cdot x}{2}}\right).
			\]
			Integrating by parts twice, we obtain that
			\[
			\int_{\RR^d} e^{\frac{i Ax\cdot x}{2}} P_m\,dx = \frac{1}{m}\int_{\RR^d} e^{\frac{i Ax\cdot x}{2}} (i A^{-1}\partial\cdot \partial) P_m \,dx.
			\] 
			Noting that $(i A^{-1}\partial\cdot \partial) P_m$ is a homogeneous polynomial of degree $m-2$, we can therefore iterate and conclude that
			\[
			\int_{\RR^d} e^{\frac{i Ax\cdot x}{2}} P_m(x)\,dx = \frac{(i A^{-1}\partial\cdot \partial)^{\ell} P_m}{m(m-2)\cdots 2} \int_{\RR^d} e^{\frac{i Ax\cdot x}{2}} dx,\quad m=2\ell,
			\]
			and that the integral equals $0$ when $m$ is odd since $(i A^{-1}\partial\cdot \partial) P_1 = 0$. The desired result follows by evaluating $\displaystyle \int_{\RR^d} e^{\frac{i Ax\cdot x}{2}} dx = (\det( A/2\pi i))^{-\frac{1}{2}}$.
		\end{proof}
		
		Let us return to compute the integral in \eqref{Taylor without cutoff}. Writing
		\[
		T_{2k+2\mu}(g_0^\mu u)(x) = \sum_{m=3\mu}^{2k+2\mu-1} \sum_{|\alpha|=m} \frac{\partial^\alpha (g_0^\mu u)(0)}{\alpha!} x^\alpha,
		\]
		using Proposition \ref{prop:exact integral} we have
		\[
		\begin{split}
			{ }&\quad\int_{\RR^d} e^{\frac{i}{h}f_0(x)} T_{2k+2\mu}(g_0^\mu u)(x) dx \\
			&= \left(\det\left(\frac{f''(0)}{2\pi i h}\right)\right)^{-\frac{1}{2}}\sum_{3\mu\leq 2\nu < 2k+2\mu} \frac{(ih f''(0)^{-1}\partial\cdot\partial)^\nu}{(2\nu)!!}  \sum_{|\alpha|=2\nu} \frac{\partial^\alpha (g_0^\mu u)(0)}{\alpha!} x^\alpha \\
			&= \left(\det\left(\frac{f''(0)}{2\pi i h}\right)\right)^{-\frac{1}{2}}\sum_{3\mu\leq 2\nu < 2k+2\mu} \frac{i^\nu h^\nu}{2^\nu \nu!} (f''(0)^{-1}\partial\cdot\partial)^\nu (g_0^\mu u)(0) .
		\end{split}
		\]
		It follows that, with operators $L_j$ given in \eqref{eqn:Lj}, 
		\[
		\sum_{\mu=0}^{2k-1}\frac{i^{\mu} h^{-\mu}}{\mu!}\int_{\RR^d} e^{\frac{i}{h}f_0(x)} T_{2k+2\mu} (g_{0}^\mu u)(x) dx = \det\left(\left(\frac{f''(0)}{2\pi i h}\right)\right)^{-\frac{1}{2}}\sum_{j<k} h^j L_j u,
		\]
		this together with \eqref{est:fix quadratic phase} and \eqref{Taylor without cutoff} implies that
		\[
		\biggl|\int e^{\frac{i}{h}f(x)} \rho(x) u(x) dx  - \det\left(\left(\frac{f''(0)}{2\pi i h}\right)\right)^{-\frac{1}{2}}\sum_{j<k} h^j L_j u\biggr| \leq C^{k+1} k!^{2s-1} h^{k+\frac{d}{2}} .
		\]
		In view of \eqref{eqn:outside cutoff} and \eqref{basic inequality 2}, we have
		\[
		\left|\int_U e^{\frac{i}{h}f(x)}u(x) dx - \int e^{\frac{i}{h}f(x)} \rho(x) u(x) dx\right| \leq C h^{\frac{d}{2}} e^{-\frac{C_0}{2}h^{-\frac{1}{2s-1}}} \leq C_1^{k+1} k!^{2s-1} h^{k+\frac{d}{2}},
		\]
		with $C_1 = C (C_0^{-1}(4s-2))^{(2s-1)}$, this completes the proof of \eqref{Gevrey stationary phase} under the technical condition $k<h^{-\frac{1}{2s-1}}$.
		
		$\bullet$ \emph{Estimates of operators $L_j$.} In order to remove the condition $k<h^{-\frac{1}{2s-1}}$, we need to derive estimates on $L_j u$, for $j\in \mathbb{N}$. To this end, we deduce from \eqref{est:Taylor coeffs} that for $|\alpha|=2\nu = 2j+2\mu$,
		\begin{equation}
			\label{estimate L_j u prep}
			|\partial^\alpha (g_0^\mu u) (0)| \leq A_u \big(2^{2s+2} 3^{2s} C_{u,f}^2\big)^j j!^{2s} \big(2^{d+4} 3^{2s} A_f C_{u,f}^2 \big)^\mu \mu!^{3-s} .
		\end{equation}
		Here we have used the inequalities $\alpha!\leq (2j+2\mu)! \leq 4^{2j+2\mu} j!^2 \mu!^2$ and $(2j-\mu)!\leq (2j)!/\mu!\leq 2^{2j} j!^2/\mu!$. Let us now recall the formula \eqref{eqn:Lj} for $L_j u$. Since $\Im f''(0)$ is positive definite, we get 
		$\| f''(0)^{-1} \|_{\rm max} = c_0 > 0$, where $\|A\|_{\rm max} = \max_{i j}a_{ij}$ is the max norm of a matrix $A$. It then follows from \eqref{eqn:Lj} and \eqref{estimate L_j u prep} that
		\[
		\begin{split}
			|L_j u| &\leq \sum_{\mu=0}^{2j} \frac{1}{2^{j+\mu} j! \mu!^2} (c_0 d^2)^{j+\mu} \max_{|\alpha|=2j+2\mu} \left|\partial^\alpha (g_0^\mu u) (0)\right| \\
			& \leq A_u \big(2^{2s+1} 3^{2s} d^2 c_0 C_{u,f}^2\big)^j j!^{2s-1} \sum_{\mu=0}^{2j} \big(3^{2s} 2^{d+3} d^2 c_0 A_f C_{u,f}^2 \big)^\mu \mu!^{1-s} \\
			&\leq 2 A_u \big(2^{2s+2d+7} 3^{6s} d^6 c_0^3 A_f^2 C_{u,f}^6 \big)^j j!^{2s-1} . 
		\end{split}
		\]
		Here we have used $s>1$ and the inequality $\sum_{\mu=0}^{2j} R^\mu \leq 2 R^{2j}$ as $R\gg 1$ in the last step. We summarize therefore the estimates for operators $L_j$, $j\in\NN$, as follows
		\begin{equation}
			\label{estimate L_j u}
			|L_j u| \leq A C^j j!^{2s-1},\quad A = 2 A_u ,\ \; C=2^{2s+2d+7} 3^{6s} d^6 c_0^3 A_f^2 C_{u,f}^6,
		\end{equation}
		where $A_f, C_f$ and $A_u, C_u$ are as in \eqref{f(x_0) Gevrey estimates} and \eqref{u(x_0) Gevrey estimates}, $C_{u,f}=\max(C_u, C_f)$.
		
		We have proved \eqref{Gevrey stationary phase} under the condition $k<h^{-\frac{1}{2s-1}}$:
		\begin{equation}
			\label{Gevrey stationary phase with condition}
			\biggl|\frac{1}{h^{d/2}}\int_U e^{\frac{i}{h} f(x)} u(x)\,dx - c_{f,0}\sum_{j=0}^{k-1} h^j L_j u\biggr| \leq C^{k+1} k!^{2s-1} h^k,\quad k < h^{-\frac{1}{2s-1}},
		\end{equation}
		where $c_{f,0}=\det\big(\frac{f''(0)}{2\pi i}\big)^{-1/2}$ and $C>0$ is some large constant. It remains to prove \eqref{Gevrey stationary phase} for $k\geq h^{-\frac{1}{2s-1}}$. Let us set $k_0 = [ (Ch)^{-\frac{1}{2s-1}} ]$, then $k_0 < h^{-\frac{1}{2s-1}}$. For any $k>k_0$, in view of \eqref{Gevrey stationary phase with condition} and \eqref{estimate L_j u} we have
		\[
		\begin{split}
			{ }&\quad \biggl|\frac{1}{h^{d/2}}\int_U e^{\frac{i}{h} f(x)} u(x)\,dx - c_{f,0}\sum_{j=0}^{k-1} h^j L_j u\biggr| \\
			&\leq \biggl|\frac{1}{h^{d/2}}\int_U e^{\frac{i}{h} f(x)} u(x)\,dx - c_{f,0}\sum_{j=0}^{k_0 -1} h^j L_j u\biggr| + |c_{f,0}|\sum_{j=k_0}^{k-1} h^j |L_j u| \\
			&\leq C^{k_0 + 1}k_0 !^{2s-1} h^{k_0} + \widetilde{A} \sum_{j=k_0}^{k-1} C^j j!^{2s-1} h^j .
		\end{split}
		\]
		In view of \eqref{seq a_k decrease increase}, we have $C^j j!^{2s-1} h^j \leq C^k k!^{2s-1} h^k$, for $k_0\leq j\leq k$, therefore,
		\begin{equation*}
			\biggl|\frac{1}{h^{d/2}}\int_U e^{\frac{i}{h} f(x)} u(x)\,dx - c_{f,0}\sum_{j=0}^{k-1} h^j L_j u\biggr| \leq (C+(k-k_0)\widetilde{A}) C^k k!^{2s-1} h^k \leq \widetilde{C}^{k+1} k!^{2s-1} h^k ,
		\end{equation*}
		for some constant $\widetilde{C}>C$. This completes the proof of \eqref{Gevrey stationary phase} for all $k\in\NN$.
	\end{proof}
	
	\section{The amplitude of approximate Bergman projection}\label{Sec construction of the amplitude}
	Let $\Omega\subset\CC^n$ be a pseudoconvex domain, and let $\Phi\in\CI(\Omega;\RR)$ be strictly plurisubharmonic in $\Omega$, namely
	\begin{equation}
		\label{Phi strict plurisubharmonic}
		\sum_{j, k=1}^{n} \frac{\partial^2 \Phi(x)}{\partial x_j \partial\xbar_k} \xi_j \overline{\xi}_k \geq c(x) |\xi|^2,\quad x\in\Omega,\ \; \xi\in\CC^n ,
	\end{equation}
	where $0<c\in C(\Omega)$.
	
	In this section, we aim to obtain a Gevrey symbol $a\in\GG^s$, holomorphic to $\infty$--order along the anti-diagonal, realizing a formal $\GG^{s,2s-1}$ symbol (see Definition \ref{defi:formal Gevrey symbol}), as a solution of \eqref{Aa=1 in main theorem}. To achieve this goal, the key analysis is a quantitative estimate of the growth rate (in $j$) of the Bergman coefficients $a_j$, see Section \ref{subsection:Bergman coefficients estimates}.
	
	\subsection{Two recursive formulas on the Bergman coefficients}
	Let us first briefly review the main result obtained in \cite{hitrik2022smooth}. For $x_0\in \Omega$, we consider the almost holomorphic function $\Psi\in \CI(\neigh((x_0,\overline{x_0}),\CC^{2n}))$ along the anti-diagonal $\{(x,\xbar): x\in\CC^n\}$, which satisfies
	\begin{equation*}
		\label{Psi as polarization}
		\Psi(x,\xbar) = \Phi(x),\ \; x\in\neigh(x_0,\CC^n);
	\end{equation*}
	\begin{equation}
		\label{Psi almost holomorphic}
		\partial_{\overline{x}}\Psi(x,y) = \OO(|y-\overline{x}|^\infty),\quad \partial_{\overline{y}}\Psi(x,y) = \OO(|y-\overline{x}|^\infty).
	\end{equation} 
	We shall recall from \cite[Theorem 1.1]{hitrik2022smooth} that there exists a classical elliptic symbol $a(x,y;h)\in S_{\textrm{cl}}^0(\textrm{neigh}((x_0,\overline{x_0}),\CC^{2n}))$ which allows an asymptotic expansion
	\begin{equation}
		\label{Bergman asymptotic}
		a(x,y;h) \sim \sum_{j=0}^\infty h^j a_j(x,y),\quad a_j\in \CI(\textrm{neigh}((x_0,\overline{x_0}),\CC^{2n})),
	\end{equation}
	with $a_j$ almost holomorphic along the anti-diagonal $\{ y=\overline{x}\}$ in the sense that
	\begin{equation}
		\label{a_j almost holomorphic}
		\partial_{\overline{x}}a_j (x,y) = \OO(|y-\overline{x}|^\infty),\quad \partial_{\overline{y}}a_j (x,y) = \OO(|y-\overline{x}|^\infty),\quad j = 0,1,2,\ldots,
	\end{equation}
	such that 
	\begin{equation}
		\label{eqn:Aa=1}
		(Aa)(y,\overline{y};h) = 1 + \OO(h^\infty),\quad y\in\neigh(x_0,\CC^n),
	\end{equation}
	where $A$ is an elliptic Fourier integral operator. Moreover, there exist small open neighborhoods $U\Subset V\Subset \Omega$ of $x_0$, with smooth boundaries, such that the operator
	\begin{equation}
		\label{Bergman projection}
		\widetilde{\Pi}_V u(x) := \frac{1}{h^n}\int_V e^{\frac{2}{h}(\Psi(x,\overline{y})-\Phi(y))} a(x,\overline{y};h) u(y)\,L(dy),\quad u\in H_\Phi(V),
	\end{equation}
	satisfies 
	\begin{equation}
		\label{reproducing property 1}
		\widetilde{\Pi}_V - 1 = \OO(h^\infty) : H_\Phi(V) \to L_\Phi^2(U).
	\end{equation}
	Let us now recall from \cite[Section 2]{hitrik2022smooth} more details about the Fourier integral operator $A$, thus a more explicit characterization of the amplitude $a(x,y;h)$. For that we follow \cite{deleporte2022analytic} to introduce $\phi\in\CI(\neigh((x_0,\overline{x_0};x_0,\overline{x_0}),\CC^{4n}))$ by
	\begin{equation}
		\label{phase phi defn}
		\phi(y,\widetilde{x};x,\widetilde{y}) := \Psi(x,\widetilde{y}) - \Psi(x,\widetilde{x}) - \Psi(y,\widetilde{y}) + \Psi(y,\widetilde{x}).
	\end{equation}
	We say that $\Gamma(y,\widetilde{x})\subset \CC_{x}^n \times \CC_{\widetilde{y}}^n$ is a good contour with respect to the phase function $(x,\widetilde{y}) \mapsto \phi(y,\widetilde{x};x,\widetilde{y})$, if it is a smooth $2n$--(real) dimensional contour of integration passing through the point $(x,\widetilde{y}) = (y,\widetilde{x})$, depending smoothly on the parameters $(y,\widetilde{x})\in\neigh((x_0,\overline{x_0}),\CC^{2n})$, such that
	\begin{equation}
		\label{good contour C 2n}
		\Re \phi(y,\widetilde{x};x,\widetilde{y}) \leq -C^{-1} \dist((x,\widetilde{y}),(y,\widetilde{x}))^2,\quad (x,\widetilde{y})\in\Gamma(y,\widetilde{x}),
	\end{equation}
	where $C>0$ is uniform for $(y,\widetilde{x})\in\neigh((x_0,\overline{x_0}),\CC^{2n})$. For future reference, we remark that, following \cite{hitrik2022smooth}, the affine contour
	\begin{equation}
		\label{good contour: affine}
		\Gamma_0(y,\widetilde{x}): \neigh(0,\CC^n)\owns z \mapsto (y+z,\widetilde{x}-\zbar) \in \CC_{x}^n \times \CC_{\widetilde{y}}^n,
	\end{equation}
	is good, provided $(y,\widetilde{x})\in\neigh((x_0,\overline{x_0}),\CC^{2n})$ for some small neighborhood. For a good contour $\Gamma(y,\widetilde{x})$ with respect to $(x,\widetilde{y}) \mapsto \phi(y,\widetilde{x};x,\widetilde{y})$ and an amplitude $a(x,\widetilde{y};h)$ satisfying \eqref{Bergman asymptotic} and \eqref{a_j almost holomorphic}, we define
	\begin{equation}
		\label{A_Gamma FIO}
		(A_\Gamma a)(y,\widetilde{x};h) = \frac{1}{(2ih)^n} \int_{\Gamma(y,\widetilde{x})} e^{\frac{2}{h} \phi(y,\widetilde{x};x,\widetilde{y})} a(x,\widetilde{y};h) \,dx d\widetilde{y} .
	\end{equation}
	It has been proved in \cite[Proposition 2.3]{hitrik2022smooth} that the restriction of $A_\Gamma a$ to the anti-diagonal is independent of the choice of a good contour, up to an $\OO(h^\infty)$--error:
	\begin{prop}
		\label{prop:independent of contour}
		There exists an open neighborhood $V_0\Subset \Omega$ of $x_0$ such that for each $y\in V_0$, any two good contours $\Gamma_j(y,\ybar)$, $j=1,2$ with respect to the phase $(x,\widetilde{y})\mapsto \phi(y,\ybar;x,\widetilde{y})$, and any amplitude $a(x,\widetilde{y};h)$ satisfying \eqref{Bergman asymptotic} and \eqref{a_j almost holomorphic}, we have
		\[
		(A_{\Gamma_1} a)(y,\ybar;h) - (A_{\Gamma_2} a)(y,\ybar;h) = \OO(h^\infty),\quad y\in V_0 .
		\] 
	\end{prop}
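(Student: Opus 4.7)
The plan is a Stokes' theorem argument driven by the almost-holomorphicity of $\Psi$ (hence of $\phi$) and of the amplitude $a$. After possibly shrinking $V_0$ about $x_0$, I would realize both good contours $\Gamma_j(y,\ybar)$, $j=1,2$, as images of smooth parameterizations $\iota_j:\neigh(0,\CC^n)\to\CC_x^n\times\CC_{\widetilde{y}}^n$ with $\iota_j(0)=(y,\ybar)$, form the straight-line homotopy $\iota_t=(1-t)\iota_1+t\iota_2$, $t\in[0,1]$, and let $\Sigma\subset\CC^{2n}$ be the $(2n+1)$-dimensional chain it traces out, so that $\partial\Sigma=\Gamma_2(y,\ybar)-\Gamma_1(y,\ybar)$. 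Since $dx\,d\widetilde{y}$ is a holomorphic top form in the integration variables, Stokes' theorem yields
\begin{equation*}
(A_{\Gamma_1}a-A_{\Gamma_2}a)(y,\ybar;h)=\frac{1}{(2ih)^n}\int_{\Sigma}\overline{\partial}_{(x,\widetilde{y})}\bigl(e^{\frac{2}{h}\phi}\,a\bigr)\wedge dx\,d\widetilde{y},
\end{equation*}
and a direct computation expands the integrand as $e^{\frac{2}{h}\phi}\bigl(\tfrac{2}{h}\overline{\partial}\phi\cdot a+\overline{\partial}a\bigr)\wedge dx\,d\widetilde{y}$.

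Next I would estimate the $\overline{\partial}$-factor. Differentiating \eqref{phase phi defn} and applying \eqref{Psi almost holomorphic} gives $\partial_{\overline{x}}\phi=\OO(|\widetilde{y}-\overline{x}|^\infty)+\OO(|\ybar-\overline{x}|^\infty)$ and $\partial_{\overline{\widetilde{y}}}\phi=\OO(|\widetilde{y}-\overline{x}|^\infty)+\OO(|\widetilde{y}-\ybar|^\infty)$, while \eqref{a_j almost holomorphic} together with the Borel-type realization \eqref{Bergman asymptotic} yields $\overline{\partial}a=\OO(|\widetilde{y}-\overline{x}|^\infty)$. The elementary inequality $|\widetilde{y}-\overline{x}|+|\ybar-\overline{x}|+|\widetilde{y}-\ybar|\leq 3\,\dist\bigl((x,\widetilde{y}),(y,\ybar)\bigr)$ then gives, for every $N\in\NN$, a constant $C_N>0$ such that
\begin{equation*}
\bigl|\overline{\partial}_{(x,\widetilde{y})}\bigl(e^{\frac{2}{h}\phi}\,a\bigr)\bigr|\leq C_N h^{-1}\,\dist\bigl((x,\widetilde{y}),(y,\ybar)\bigr)^{N}\,e^{\frac{2}{h}\Re\phi}
\end{equation*}
pointwise on $\Sigma$. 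The remaining ingredient is a uniform Gaussian bound $\Re\phi\leq -C^{-1}\dist\bigl((x,\widetilde{y}),(y,\ybar)\bigr)^{2}$ along every intermediate slice $\iota_t\bigl(\neigh(0,\CC^n)\bigr)$; once this is in hand, substituting both estimates and rescaling by $w=z/\sqrt{h}$ converts the integral into $h^{N/2-1}\int e^{-c|w|^2}|w|^{N}\,dw$, and since $N$ is arbitrary the right-hand side is $\OO(h^\infty)$.

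The main obstacle is precisely the uniform quadratic bound on $\Re\phi$ along the homotopy: arbitrary good contours need not be small $C^1$-perturbations of a single reference, so the straight line between $\iota_1$ and $\iota_2$ need not be good. To overcome this I would compare each $\Gamma_j$ separately with the affine contour $\Gamma_0$ of \eqref{good contour: affine}, using that Taylor expansion of $\phi$ at its nondegenerate critical point $(y,\ybar)$ (whose Hessian inherits definiteness from the strict plurisubharmonicity \eqref{Phi strict plurisubharmonic} of $\Phi$) supplies the desired quadratic bound along any contour sufficiently close to $\Gamma_0$. Outside a small neighborhood of the critical point, $\Re\phi\leq -\delta$ holds uniformly on any good contour, and the corresponding contribution is absorbed into an $e^{-\delta/h}=\OO(h^\infty)$ error via a cutoff, leaving the Stokes' argument to be applied only on the quadratic region near $(y,\ybar)$.
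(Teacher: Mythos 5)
Your overall architecture is the right one and matches the cited argument in spirit: realize both contours inside a common $(2n+1)$-chain, apply Stokes' theorem, bound $\overline{\partial}\phi$ and $\overline{\partial}a$ by $\OO(\dist^\infty)$ using \eqref{Psi almost holomorphic} and \eqref{a_j almost holomorphic}, and close with the Gaussian decay and a $z = \sqrt{h}\,w$ rescaling, giving $\OO(h^{N/2-1})$ for every $N$. You also correctly put your finger on the single real obstruction: a pointwise straight-line homotopy between two arbitrary good contours in the ambient $(x,\widetilde{y})$-coordinates has no reason to produce good intermediate contours, so the uniform bound $\Re\phi\leq -C^{-1}\dist^2$ can fail along the homotopy.

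The difficulty is that the fix you sketch does not actually close this gap. Comparing each $\Gamma_j$ with the affine $\Gamma_0$ runs into exactly the same problem, and ``any contour sufficiently close to $\Gamma_0$'' is not a condition that arbitrary good contours satisfy: even localized to an $\OO(\sqrt h)$-neighborhood of $(y,\ybar)$, a good contour through the critical point can have its tangent plane anywhere in the open cone of directions on which $\Hess(\Re\phi)$ is negative definite, hence far from $T\Gamma_0$. Truncating by a cutoff kills the tails but does nothing to produce the homotopy near the critical point, which is where the whole issue lives. The missing ingredient is the Morse lemma (equivalently Sj\"ostrand's good-contour calculus from \cite{Sj82}): after a change of coordinates $\gamma_y:(t,s)\mapsto(x,\widetilde{y})$ with $\Re\phi\circ\gamma_y = \tfrac12(t^2-s^2)$, \emph{every} good contour becomes a graph $t=g(y;s)$ with $|g(y;s)|\leq\alpha|s|$ for some fixed $\alpha<1$, as in \eqref{good contour condition Morse coordinates}, and the \emph{graph-function} interpolation $g_\theta=(1-\theta)g_0+\theta g_1$ manifestly stays in this class because the slope bound is convex. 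The pointwise straight line between corresponding contour points coincides with graph interpolation only once both contours are parametrized over the same $s$-slices in the Morse chart; without that normalization the homotopy is not good. This is precisely the mechanism used in the paper's proof of the Gevrey analogue (Proposition~\ref{prop: independent of contour Gevrey}, the family $\Gamma_\theta$) and in the cited \cite[Proposition~2.3]{hitrik2022smooth}. Once you insert the Morse-coordinates step, the rest of your Stokes and scaling argument goes through verbatim.
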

	\noindent
	We can therefore interpret \eqref{eqn:Aa=1} by, given that $\Gamma(y,\ybar)$ is good,
	\[
	(A_\Gamma a)(y,\ybar;h) = 1 + \OO(h^\infty),\quad y\in\neigh(x_0,\CC^n).
	\] 
	In view of Proposition \ref{prop:independent of contour}, one shall choose a particular good contour for convenience, following \cite[Section 2]{hitrik2022smooth}, we work with the good contour $\Gamma_0(y,\ybar)$ given in \eqref{good contour: affine}. Using the parametrization of $\Gamma_0(y,\ybar)$: $U\owns z\mapsto (y+z,\ybar-\zbar)$, for some $U=\neigh(0,\CC^n)$, it follows from \eqref{A_Gamma FIO} that the amplitude $a$ satisfies
	\begin{equation}
		\label{recursive HS integral form}
		(A_{\Gamma_0} a)(y,\ybar;h) = \frac{1}{h^n}\int_{U} e^{\frac{i}{h} f(y,z)} a(y+z,\ybar-\zbar;h)\,L(dz) = 1 + \OO(h^\infty),
	\end{equation}    
	\begin{equation}
		\label{eqn:f(y,z)}
		\text{with}\quad f(y,z) := -2i \phi(y,\ybar;y+z,\ybar-\zbar).
	\end{equation}
	Here $L(dz)$ denotes the Lebesgue measure on $\CC^n$, and we have used the identity $dz\,d\zbar = (2/i)^n L(dz)$ (we require that $(dx_1,\ldots,dx_n,dy_1,\ldots,dy_n)$ is positively oriented). It has been shown in \cite[Section 2]{hitrik2022smooth}, by Taylor expansions, that
	\begin{equation*}
		f(y,z) = 2i\Phi_{x\overline{x}}''(y)\zbar\cdot z + g(y,z),\quad g(y,z) = \OO(|z|^3).
	\end{equation*}
	We can then apply the complex stationary phase method in the form given in \cite[Theorem 7.7.5]{hormander1985analysis} to obtain a complete asymptotic expansion of the integral in \eqref{recursive HS integral form} as $h\to 0^+$, with the help of \eqref{Bergman asymptotic}, we conclude
	\[
	(A_{\Gamma_0} a)(y,\ybar;h) \sim \sum_{\ell=0}^\infty h^\ell \sum_{j+k = \ell}(L_{k,y} a_j)(y,\ybar),
	\]
	with the operators $L_{k,y}$, $k\in\NN$, given explicitly by
	\begin{equation}
		\label{eqn:L_k,y operators}
		\begin{split}
			(L_{k,y} a_j)(y,\ybar) = &\frac{\pi^n}{2^n \det(\Phi_{x\overline{x}}''(y))}\sum_{\nu-\mu=k}\sum_{2\nu\geq 3\mu}\frac{i^{\mu}}{2^\nu \mu! \nu!} \\ &\qquad\left(\big(\Phi_{x\overline{x}}''(y)\big)^{-1}\partial_z\cdot\partial_{\zbar}\right)^\nu \big(g(y,z)^\mu a_j(y+z,\ybar-\zbar)\big)\bigg\lvert_{z=0}
		\end{split}
	\end{equation}
	This with \eqref{recursive HS integral form} implies that the Bergman coefficients $a_j$ given in \eqref{Bergman asymptotic} satisfy the following recursive formula,
	\begin{equation}
		\label{recursive formula HS}
		\begin{gathered}
			a_m(y,\ybar) = -\sum_{j=1}^m \sum_{\substack{\nu-\mu=j \\ 2\nu\geq 3\mu}}\frac{i^{\mu}}{2^\nu \mu! \nu!}\left(\big(\Phi_{x\overline{x}}''(y)\big)^{-1}\partial_z\cdot\partial_{\zbar}\right)^\nu \big(g(y,z)^\mu a_{m-j}(y+z,\ybar-\zbar)\big)\bigg\lvert_{z=0}\\
			a_0(y,\ybar) = \frac{2^n \det(\Phi_{x\overline{x}}''(y))}{\pi^n}.
		\end{gathered}
	\end{equation}
	We remark that the terms in the sum appearing in \eqref{eqn:L_k,y operators} or \eqref{recursive formula HS} depend only on the restrictions of the $a_j$'s to the anti-diagonal. In fact, we have by \eqref{a_j almost holomorphic},
	\[
	\partial_z^\alpha \partial_{\zbar}^\beta \big(a_j(y+z,\ybar-\zbar)\big)\big\lvert_{z=0} = (-1)^{|\beta|} \partial_y^\alpha \partial_{\ybar}^\beta a_j(y,\ybar),\quad \alpha, \beta \in \NN^n .
	\]
	
	In the next subsection, we will prove the Gevrey-$s$ estimates for the Bergman coefficients $a_m(x,\xbar)$ given in \eqref{Bergman asymptotic}, for $x\in U\Subset\Omega$ a small open ball centered at $x_0\in\Omega$. We will also estimate the growth rate of $\|a_m(x,\overline{x})\|_{s,R,U}$ (with some constant $R>0$) as $m\to\infty$. For that we shall first derive another recursive formula on the Bergman coefficients $a_m$, which is essentially implied by \cite[Lemma 9 and Equation (10)]{charles2003berezin}. This recursive formula turns out to be more convenient than that in \eqref{recursive formula HS} when one estimates the growth of $a_m$ with respect to $m$. We will include a self-contained proof for readers' convenience. We remark that in the case of real analytic K{\"a}hler potentials $\Phi$ it was proved in \cite[Section 3]{hezari2021property}.
	
	Let $V\Subset \Omega$ be a small pseudoconvex neighborhood of $x_0$ as in \eqref{Bergman projection}, as we may even choose it to be a ball centered at $x_0$. For any $x,y\in V$, we introduce
	\begin{equation}
		\label{eqn:k_x(y)}
		k_x(y) := \exp\Bigl(\frac{2}{h} \Psi(y,\xbar) - \frac{1}{h} \Phi(x) \Bigr).
	\end{equation}
	Let us compute the following Taylor expansions:
	\begin{equation}
		\label{Phi Taylor expansion}
		\begin{split}
			\Phi(y) =\ &\Phi(x) + 2\Re\Bigl( \frac{\partial\Phi}{\partial x}(x)\cdot(y-x) + \frac{1}{2}\Phi_{xx}''(x) (y-x)\cdot (y-x) \Bigr) \\
			& + \Phi_{x\overline{x}}''(x)\overline{(y-x)}\cdot (y-x) + \OO(|y-x|^3), 
		\end{split}
	\end{equation}
	\begin{equation}
		\label{Psi Taylor expansion}
		\Psi(x,\overline{y}) = \Phi(x) + \frac{\partial\Phi}{\partial \overline{x}}(x)\cdot\overline{(y-x)} + \frac{1}{2}\Phi_{\overline{x}\overline{x}}''(x) \overline{(y-x)}\cdot \overline{(y-x)} + \OO(|y-x|^3),
	\end{equation}
	\begin{equation}
		\label{Psi Taylor expansion y,xbar}
		\Psi(y,\xbar) = \Phi(x) + \frac{\partial\Phi}{\partial x}(x)\cdot(y-x) + \frac{1}{2}\Phi_{x x}''(x) (y-x)\cdot (y-x) + \OO(|y-x|^3),
	\end{equation}
	where we used \eqref{Psi almost holomorphic} when computing the Taylor expansions for $\Psi(x,\overline{y})$ and $\Psi(y,\xbar)$. In view of \eqref{Phi Taylor expansion} and \eqref{Psi Taylor expansion}, we can assume $V$ to be small enough so that
	\begin{equation}
		\label{Psi basic estimate}
		\Phi(x) + \Phi(y) - 2\Re\Psi(x,\ybar) \asymp |x-y|^2,\quad x, y \in V .
	\end{equation}
	It then follows from \eqref{eqn:k_x(y)}, \eqref{Psi basic estimate} (switching $x$ and $y$) and the strict plurisubharmonicity of $\Phi$ that there exists $c_0=c_0(V)>0$ such that
	\begin{equation}
		\label{k_x pointwise bound}
		|k_x(y)|e^{-\frac{1}{h}\Phi(y)} \leq e^{-\frac{c_0}{h} |y-x|^2}.
	\end{equation} 
	As a consequence, we can deduce that
	\begin{equation}
		\label{k_x H_Phi norm}
		k_x\in L_\Phi^2 (V),\quad \|k_x\|_{L_\Phi^2 (V)} \leq c_n c_0^{-n/2} h^{n/2},\quad c_n^2 = \int_{\CC^n} e^{-2|z|^2} L(dz).
	\end{equation}
	We note that $k_x$ may not be in $H_\Phi(V)$ as we only know $k_x$ is almost holomorphic in $y\in V$, in view of \eqref{eqn:k_x(y)} and \eqref{Psi almost holomorphic}. Let us now estimate $h\partial_{\ybar} k_x(y)$. For that we compute
	\[
	h\partial_{\ybar} k_x(y) = 2\partial_{\ybar}\Psi(y,\xbar) k_x(y).
	\] 
	This, together with \eqref{Psi almost holomorphic} and \eqref{k_x pointwise bound} implies that
	\begin{equation}
		\label{dbar k_x pointwise}
		|h\partial_{\ybar} k_x(y)|e^{-\frac{1}{h}\Phi(y)} \leq \OO(|y-x|^\infty) e^{-\frac{c_0}{h} |y-x|^2} .
	\end{equation}
	By \eqref{basic inequality 1} we get, for each $N\in\NN$,
	\[
	|y-x|^N e^{-\frac{c_0}{2h} |y-x|^2} \leq c_0^{-N/2} N!^{1/2} h^{N/2},
	\]
	we can therefore deduce from \eqref{dbar k_x pointwise}
	\begin{equation} 
		\label{dbar k_x estimates}
		\left\| e^{-\frac{1}{h}\Phi(y)} h\partial_{\ybar} k_x(y) \right\|_{L^\infty(V)} = \OO(h^\infty);\quad \|h\partial_{\ybar} k_x\|_{L_\Phi^2 (V)} = \OO(h^\infty).
	\end{equation}
	We claim that $k_x\in L_\Phi^2 (V)$ is $\OO(h^\infty)$--close to $H_\Phi(V)$. To see that, let 
	\[
	\Pi_\Phi : L_\Phi^2 (V) \to H_\Phi(V) 
	\]
	be the orthogonal projection. We note that the solution of the $\overline{\partial}$--equation
	\[
	\partial_{\ybar} u(y) = \partial_{\ybar} k_x(y),\quad u\in L_\Phi^2 (V),
	\]
	with the minimal $L_\Phi^2 (V)$--norm is given by $(1-\Pi_\Phi)k_x$. Applying H\"ormander's $L^2$--estimates for the $\overline{\partial}$--equation on the open pseudoconvex $V$ and the strictly plurisubharmonic function $\Phi$, see for instance \cite[Proposition 4.2.5]{hormander1994convexity}, we obtain with \eqref{dbar k_x estimates},
	\begin{equation}
		\label{k_x-PiPhik_x L2 norm}
		\| (1-\Pi_\Phi)k_x \|_{L_\Phi^2 (V)}\leq Ch^{1/2} \|\partial_{\ybar} k_x\|_{L_\Phi^2 (V)} \leq \OO(h^\infty) .
	\end{equation}
	Applying \eqref{reproducing property 1} to $\Pi_\Phi k_x\in H_\Phi(V)$, with a smaller neighborhood $U\Subset V$ of $x_0$, we get
	\begin{equation}
		\label{L2 reproducing for PiPhik_x}
		\| (\widetilde{\Pi}_V - 1)\Pi_\Phi k_x \|_{L_{\Phi}^2(U)} = \OO(h^\infty) \|\Pi_\Phi k_x\|_{H_\Phi(V)} \leq \OO(h^\infty),
	\end{equation}
	where we used \eqref{k_x H_Phi norm} and the fact that $\Pi_\Phi = \OO(1): L_\Phi^2(V) \to H_\Phi(V)$ in the second inequality. Let us recall that 
	\[
	\widetilde{\Pi}_V = \OO(1) : L_\Phi^2(V) \to L_\Phi^2(V), 
	\]
	which is implied by the following estimate
	together with the Schur test. We can therefore combine \eqref{k_x-PiPhik_x L2 norm} and \eqref{L2 reproducing for PiPhik_x} to obtain
	\begin{equation}
		\label{Pi_V k_x - k_x L2 norm}
		\begin{split}
			\|(\widetilde{\Pi}_V - 1) k_x \|_{L_{\Phi}^2(U)} &\leq \|(\widetilde{\Pi}_V - 1)\Pi_\Phi k_x \|_{L_{\Phi}^2(U)} + \| (\widetilde{\Pi}_V - 1)((1-\Pi_\Phi) k_x) \|_{L_{\Phi}^2(V)} \\
			&\leq \OO(h^\infty) + \OO(1)\OO(h^\infty) = \OO(h^\infty).	
		\end{split}
	\end{equation}
	
	We shall derive a pointwise estimate for $h\partial_{\ybar}(\widetilde{\Pi}_V k_x)(y)$ in $y\in U$. In view of \eqref{Psi almost holomorphic}, \eqref{Bergman asymptotic} and \eqref{a_j almost holomorphic}, using \eqref{Bergman projection} we have
	\[
	\begin{split}
		{ }&\qquad |h\partial_{\ybar}(\widetilde{\Pi}_V k_x)(y)|e^{-\frac{1}{h}\Phi(y)} \\
		&\leq \frac{1}{h^n} \int_V e^{\frac{1}{h}(2\Re\Psi(y,\wbar) - \Phi(y) - \Phi(w))} (|\partial_{\ybar} \Psi(y,\wbar)| + h|\partial_{\ybar} a(y,\wbar)|) |k_x(w)| e^{-\frac{1}{h}\Phi(w)} L(dw) \\
		&\leq \frac{1}{h^n}\int_V e^{-\frac{|y-w|^2}{Ch}} \OO(|y-w|^\infty) |k_x(w)| e^{-\frac{1}{h}\Phi(w)} L(dw). 
	\end{split}
	\]
	It then follows from \eqref{basic inequality 1} and \eqref{k_x H_Phi norm}, combined with the Cauchy--Schwarz inequality, that
	\begin{equation}
		\label{dbar Pi_V k_x estimate}
		\left\|e^{-\frac{1}{h}\Phi(y)} h\partial_{\ybar}(\widetilde{\Pi}_V k_x)(y) \right\|_{L^\infty(U)} = \OO(h^\infty) .
	\end{equation}
	
	Let us now recall a well-known result, which allows one to deduce pointwise estimates from the weighted $L^2$--estimates like \eqref{Pi_V k_x - k_x L2 norm}, as follows:
	\begin{prop}\label{prop:L2 to pointwise}
		Suppose that $\widetilde{U}\Subset U \Subset \Omega$ are open. Then there exists $C>0$ such that for any $f\in L_\Phi^2 (U)$ with $h\overline{\partial} f\in L^\infty(U)$ and $h>0$ sufficiently small, we have
		\begin{equation}
			\label{L2 and dbar to pointwise}
			|f(z)| e^{-\frac{1}{h}\Phi(z)} \leq C \left(	\left\|e^{-\frac{1}{h}\Phi(y)} h\partial_{\ybar}f(y)\right\|_{L^\infty(U)} + h^{-n}\|f\|_{L_\Phi^2 (U)} \right),\quad z\in\widetilde{U}.
		\end{equation} 
	\end{prop}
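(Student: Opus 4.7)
The plan is to reduce the proposition to the classical weighted sub-mean-value estimate for holomorphic functions by absorbing the rapidly varying weight $e^{-\Phi/h}$ into a function that is almost holomorphic to infinite order at the base point. Fix $z\in\widetilde{U}$ and choose a radius $r$ of order $h$ so that $B(z,2r)\subset U$ for all sufficiently small $h>0$. Let $\Psi(w,\zbar)$ denote the almost holomorphic extension of $\Phi$ reviewed in Section \ref{subsection:almost holomorphic extension}, so that $\Psi(z,\zbar)=\Phi(z)$ and $\partial_{\wbar}\Psi(w,\zbar)=\OO(|w-z|^\infty)$, and set
\begin{equation*}
F(w):=f(w)\,e^{-2\Psi(w,\zbar)/h}.
\end{equation*}
Then $F(z)=f(z)e^{-2\Phi(z)/h}$, and the identity $\Phi(w)+\Phi(z)-2\Re\Psi(w,\zbar)\asymp|w-z|^2$ (on a small neighborhood) combined with $|w-z|\leq r=O(h)$ yields the weight comparison $|F(w)|^2 e^{2\Phi(z)/h}\leq C\,|f(w)|^2 e^{-2\Phi(w)/h}$ on $B:=B(z,r)$, and therefore $\|F\|_{L^2(B)}\leq C e^{-\Phi(z)/h}\|f\|_{L_\Phi^2(U)}$.

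Next I would apply a Bochner--Martinelli representation to $F$ on $B$, averaging the integration radius over $[r,2r]$ to trade the boundary trace for an $L^2$-norm on $B$; using $|K_{BM}(w,z)|\leq C|w-z|^{-(2n-1)}$ this yields
\begin{equation*}
|F(z)|\leq \frac{C}{r^n}\|F\|_{L^2(B)}+C\int_B|\overline{\partial} F(w)|\,|w-z|^{-(2n-1)}\,L(dw).
\end{equation*}
The first term produces $Ch^{-n}e^{-\Phi(z)/h}\|f\|_{L_\Phi^2(U)}$. For the second, decompose
\begin{equation*}
\overline{\partial}_w F=e^{-2\Psi(w,\zbar)/h}\Bigl(\overline{\partial} f-\tfrac{2}{h}\bigl(\partial_{\wbar}\Psi(w,\zbar)\bigr)\,f\Bigr).
\end{equation*}
The $\overline{\partial} f$ piece is pointwise bounded on $B$ by $Ch^{-1}e^{-\Phi(z)/h}\|e^{-\Phi/h}h\partial_{\ybar}f\|_{L^\infty(U)}$ via the same weight comparison, and $\int_B|w-z|^{-(2n-1)}L(dw)\leq Cr$ gives a factor $r/h=O(1)$, so this piece contributes $Ce^{-\Phi(z)/h}\|e^{-\Phi/h}h\partial_{\ybar}f\|_{L^\infty(U)}$.

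The main obstacle is the second piece $(2/h)(\partial_{\wbar}\Psi)f$, since $|f|$ is controlled only in $L_\Phi^2$ and not pointwise. The rescue is the infinite-order flatness $|\partial_{\wbar}\Psi(w,\zbar)|\leq C_N|w-z|^N$ for every $N\in\NN$, granted by the almost holomorphic extension of $\Phi$. Pulling this into the kernel integral and applying Cauchy--Schwarz with the weight, the contribution is estimated by
\begin{equation*}
C C_N h^{-1}e^{-\Phi(z)/h}\left(\int_B|w-z|^{2(N-2n+1)}\,L(dw)\right)^{1/2}\|f\|_{L_\Phi^2(U)}\leq CC_N h^{-1}r^{N-n+1}e^{-\Phi(z)/h}\|f\|_{L_\Phi^2(U)},
\end{equation*}
which for $N$ chosen sufficiently large (depending only on $n$) is absorbed into the $Ch^{-n}\|f\|_{L_\Phi^2(U)}$ term. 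Combining the three contributions and dividing by $e^{-\Phi(z)/h}$ yields \eqref{L2 and dbar to pointwise}, uniformly in $z\in\widetilde{U}$ by the compact containment $\widetilde{U}\Subset U$.
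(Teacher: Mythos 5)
Your argument is correct and follows the standard route (the paper defers to \cite[Proposition 5.1]{hitrik2022smooth}, whose proof has the same skeleton): strip the weight with a polarizing reference function, apply a Bochner--Martinelli estimate on a ball of radius $r\sim h$, and observe that the weight ratio $\Phi(w)+\Phi(z)-2\Re\Psi(w,\overline{z})=\OO(|w-z|^2)=\OO(h^2)$ there contributes no growing exponential after dividing by $h$; the balance $r\sim h$ then gives $r^{-n}\sim h^{-n}$ for the mean-value term and $r/h\sim 1$ for the $\overline{\partial}f$ integral. You took the almost-holomorphic extension $\Psi$ as the reference, which forces you to handle the extra term $\tfrac{2}{h}(\partial_{\overline{w}}\Psi)f$; your workaround via $\OO(|w-z|^\infty)$-flatness and a Cauchy--Schwarz absorption is valid, and the exponent bookkeeping ($N\geq n$ for convergence, $h^{-1}r^{N-n+1}\lesssim 1$ for $r\sim h$) checks out. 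However, it is cleaner to replace $\Psi$ by the genuinely holomorphic second-order Taylor polynomial $\Psi_0(w,\overline{z}):=\Phi(z)+\partial_x\Phi(z)\cdot(w-z)+\tfrac12\Phi''_{xx}(z)(w-z)\cdot(w-z)$. Then $\overline{\partial}_w\bigl(f\,e^{-2\Psi_0/h}\bigr)=e^{-2\Psi_0/h}\,\overline{\partial}f$ with no second piece at all, while $\Phi(w)+\Phi(z)-2\Re\Psi_0=\Phi''_{x\overline{x}}(z)\overline{(w-z)}\cdot(w-z)+\OO(|w-z|^3)$ is still $\OO(h^2)$ on the $\OO(h)$-ball; this removes any need for infinite-order flatness or a choice of $N$. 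Finally, a small rigor point worth a sentence: applying Bochner--Martinelli requires $F$ (hence $f$) to have a continuous representative, which does follow from $h\overline{\partial}f\in L^\infty(U)$ by elliptic regularity for $\overline{\partial}$; in the paper's applications $f$ is in any case smooth, so this is not a real obstruction.
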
 
	\noindent
	We omit the proof here and refer the reader to \cite[Proposition 5.1]{hitrik2022smooth}. Let us proceed to apply \eqref{L2 and dbar to pointwise} to $f = k_x - \widetilde{\Pi}_V k_x$, recalling \eqref{Pi_V k_x - k_x L2 norm}, \eqref{dbar k_x estimates} and \eqref{dbar Pi_V k_x estimate}, we conclude that
	\[
	\forall z\in\widetilde{U},\quad |k_x(z) - \widetilde{\Pi}_V k_x (z)| e^{-\frac{1}{h}\Phi(z)} = \OO(h^\infty)	.
	\]
	Recalling \eqref{Bergman projection} and \eqref{eqn:k_x(y)}, noting that $k_x(x) = e^{\frac{1}{h}\Phi(x)}$, we therefore obtain 
	\begin{equation}
		\label{Charles recursive: the integral}
		\frac{1}{h^n}\int_V e^{\frac{i}{h}\widetilde{f}_x(y)} a(x,\overline{y};h) \,L(dy) = 1 + \OO(h^\infty),\quad x\in\widetilde{U},
	\end{equation}
	where
	\begin{equation}
		\label{eqn:tilde f(x,y)}
		\widetilde{f}_x(y) := 2i\left( \Phi(y) + \Phi(x) - \Psi(x,\overline{y}) - \Psi(y,\xbar) \right) .
	\end{equation}
	In view of the Taylor expansions \eqref{Phi Taylor expansion}-\eqref{Psi Taylor expansion y,xbar}, we have
	\begin{equation}
		\label{eqn:tilde f(x,y) expansion}
		\widetilde{f}_x(y) = 2i\Phi_{x\overline{x}}''(x)\overline{(y-x)}\cdot (y-x) + \OO(|y-x|^3).
	\end{equation}
	Recalling the strict plurisubharmonicity of $\Phi$, we may therefore assume that $V\Subset\Omega$ is small enough so that for some $C>0$, we have
	\begin{equation}
		\label{Im f_x positive definite}
		\Im \widetilde{f}_x(y)\geq C^{-1} |y-x|^2,\quad x, y\in V.
	\end{equation}
	
	Let us denote by $\Hess_\RR (f)$ the Hessian matrix of $f\in\CI(V)$ with respect to real coordinates $(\Re y,\Im y)\in\RR^{2n}\simeq \CC^n$. By a direct computation we have
	\begin{equation}
		\label{real Hessian tilde fx}
		(\Hess_\RR(\widetilde{f}_x)|_x)^{-1}\partial_{\Re y,\Im y}\cdot \partial_{\Re y,\Im y} = -i(\Phi_{x\overline{x}}''(x))^{-1}\partial_y\cdot \partial_{\overline{y}}.
	\end{equation}
	Using \eqref{eqn:tilde f(x,y) expansion}, \eqref{Im f_x positive definite} and \eqref{real Hessian tilde fx}, we can apply the complex stationary phase lemma given in \cite[Theorem 7.7.5]{hormander1985analysis} to achieve a full asymptotic expansion for the integral in \eqref{Charles recursive: the integral}, as $h\to 0^+$. Therefore, we obtain differential operators $L_{j,x}$ of order $2j$ such that,
	\begin{equation}
		\label{Lj,x expansion}
		\sum_{j=0}^\infty h^j (L_{j,x} a(x,\overline{y};h))(x) = 1 + \OO(h^\infty).
	\end{equation}
	Here $L_{j,x}$ are differential operators in $y$ with the following expressions,
	\begin{equation}
		\label{eqn:L_j Charles}
		L_{j,x}v(x) = \frac{\pi^n}{2^n \det\Phi_{x\overline{x}}''(x)} \sum_{\nu-\mu = j}\sum_{2\nu\geq 3\mu}\frac{(-1)^\mu}{2^j \mu! \nu!}((\Phi_{x\overline{x}}''(x))^{-1}\partial_y\cdot \partial_{\overline{y}})^\nu (\widetilde{g}_x^\mu v)\bigg\lvert_{y=x},
	\end{equation}
	where the function $\widetilde{g}_x(y)=\OO(|y-x|^3)$ is given explicitly by
	\begin{equation*}
		\label{eqn:g_x Charles}
		\widetilde{g}_x(y) := (2i)^{-1}\widetilde{f}_x(y) - \Phi_{x\overline{x}}''(x)\overline{(y-x)}\cdot (y-x).
	\end{equation*}
	For future reference, we shall also compute the derivatives of $\widetilde{g}_x$ at $y=x$ by \eqref{eqn:tilde f(x,y)}. We remark that for any $\alpha$, $\beta\in\NN^n$, 
	\begin{equation}
		\label{derivs tilde g_x}
		\begin{split}
			\partial_y^\alpha \partial_{\overline{y}}^\beta \widetilde{g}_x(y) \big\lvert_{y=x} = \begin{cases}
				\partial_x^\alpha \partial_{\overline{x}}^\beta \Phi(x),\,\textrm{ if }|\alpha|, |\beta|\geq 1\textrm{ and }|\alpha+\beta|\geq 3; \\
				0,\quad\textrm{otherwise.}
			\end{cases}
		\end{split}
	\end{equation}
	Recalling that $a(x,y;h)$ has the asymptotic expansion \eqref{Bergman asymptotic}, we get from \eqref{Lj,x expansion},
	\begin{equation}
		\label{pre recursive}
		\sum_{m=0}^\infty h^m \sum_{j+k=m} (L_{j,x}a_k(x,\overline{y}))(x) = 1 + \OO(h^\infty).
	\end{equation}
	It follows that
	\begin{equation*}
		L_{0,x} a_0(x,\overline{x}) = 1,\quad \sum_{j+k=m} (L_{j,x}a_k(x,\overline{y}))(x) = 0,\quad m\geq 1.
	\end{equation*}
	Using \eqref{eqn:L_j Charles}, we conclude from \eqref{pre recursive} the following recursive formula on $a_j$.
	\begin{prop}
		For each $x\in\widetilde{U}\Subset\Omega$, the Bergman coefficients $a_j$ in \eqref{Bergman asymptotic} satisfy
		\begin{equation}
			\label{recursive formula eq}
			\begin{gathered}
				a_m(x,\overline{x}) = -\sum_{j=1}^m \sum_{\nu-\mu = j}\sum_{2\nu\geq 3\mu}\frac{(-1)^\mu}{2^j \mu! \nu!}\bigl((\Phi_{x\overline{x}}''(x))^{-1}\partial_y\cdot \partial_{\overline{y}}\bigr)^\nu \bigl(\widetilde{g}_x^\mu(y) a_{m-j}(x,\overline{y})\bigr)\Big\lvert_{y=x}, \\
				\textrm{with}\quad a_0(x,\overline{x}) = \frac{2^n}{\pi^n} \det\Phi_{x\overline{x}}''(x) .
			\end{gathered}
		\end{equation}
	\end{prop}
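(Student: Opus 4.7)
The plan is to read off the recursion from the integral identity \eqref{Charles recursive: the integral} by applying the complex stationary phase method to the phase $\widetilde{f}_x$, and then matching coefficients of powers of $h$ against $1+\OO(h^\infty)$.

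First, I would recall that by \eqref{eqn:tilde f(x,y) expansion} and \eqref{Im f_x positive definite} the function $\widetilde{f}_x(y)$ has a nondegenerate critical point at $y=x$ with $\Im \widetilde{f}_x(y)\gtrsim |y-x|^2$ on $V$, so that the smooth complex stationary phase lemma \cite[Theorem~7.7.5]{hormander1985analysis} applies to $h^{-n}\int_V e^{\frac{i}{h}\widetilde{f}_x(y)} a(x,\ybar;h)\,L(dy)$ as $h\to 0^+$. The resulting asymptotic expansion has the form $\sum_{j\geq 0} h^j (L_{j,x} a(x,\ybar;h))(x)$, where $L_{j,x}$ is a differential operator in $y$ of order $2j$; comparing with \eqref{Charles recursive: the integral} gives \eqref{Lj,x expansion}. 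The prefactor from stationary phase involves $(\det(\Hess_{\RR}(\widetilde{f}_x)|_x /(2\pi i h)))^{-1/2}$, which using \eqref{eqn:tilde f(x,y) expansion} simplifies to $\frac{\pi^n}{2^n \det\Phi_{x\xbar}''(x)}$ in view of the standard identity between the real Hessian of a Hermitian form and the complex Hessian.

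Next, I would use the Hessian identity \eqref{real Hessian tilde fx} to translate the real differential operator $(\Hess_{\RR}(\widetilde{f}_x)|_x)^{-1}\partial_{\Re y,\Im y}\cdot\partial_{\Re y,\Im y}$, which appears naturally in the stationary phase expansion, into the complex form $-i(\Phi_{x\xbar}''(x))^{-1}\partial_y\cdot \partial_{\ybar}$. After decomposing $\widetilde{f}_x(y)/(2i)$ into its quadratic part plus the cubic-order remainder $\widetilde{g}_x(y)$, the standard Hörmander formula \cite[Theorem~7.7.5]{hormander1985analysis} yields
\begin{equation*}
L_{j,x}v(x) = \frac{\pi^n}{2^n \det\Phi_{x\xbar}''(x)} \sum_{\nu-\mu=j}\sum_{2\nu\geq 3\mu}\frac{(-1)^\mu}{2^{j}\mu!\nu!}\bigl((\Phi_{x\xbar}''(x))^{-1}\partial_y\cdot\partial_{\ybar}\bigr)^\nu\bigl(\widetilde{g}_x^\mu v\bigr)\bigl|_{y=x},
\end{equation*}
which is precisely \eqref{eqn:L_j Charles}, the sign $(-1)^\mu$ accounting for the factor of $-i$ above.

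Finally, I would insert the formal asymptotic expansion $a(x,\ybar;h)\sim \sum_{k\geq 0} a_k(x,\ybar) h^k$ from \eqref{Bergman asymptotic} into \eqref{Lj,x expansion} and equate coefficients of $h^m$ in the identity $1+\OO(h^\infty)$, which gives \eqref{pre recursive}. The $m=0$ equation $L_{0,x}a_0(x,\xbar)=1$, combined with the observation that $L_{0,x}$ reduces to multiplication by $\frac{\pi^n}{2^n\det\Phi_{x\xbar}''(x)}$ (only the term $\mu=\nu=0$ survives), yields $a_0(x,\xbar) = \frac{2^n}{\pi^n}\det\Phi_{x\xbar}''(x)$. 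For $m\geq 1$, isolating the $j=0$ term $L_{0,x}a_m(x,\xbar)$ and moving the $j\geq 1$ contributions to the right-hand side, the prefactor $\frac{\pi^n}{2^n\det\Phi_{x\xbar}''(x)}$ cancels against the $\frac{2^n\det\Phi_{x\xbar}''(x)}{\pi^n}$ appearing in each $L_{j,x}$, producing the clean recursion stated in \eqref{recursive formula eq}. No real obstacle is expected since all analytic ingredients (the integral identity, the Hessian identity, and the stationary phase expansion) are already in place; the only bookkeeping concern is tracking signs and the factor $\frac{2^n\det\Phi_{x\xbar}''(x)}{\pi^n}$ through the cancellation, which is routine.
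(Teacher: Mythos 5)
Your proposal reproduces essentially the same argument the paper uses: you start from the integral identity \eqref{Charles recursive: the integral}, invoke the Taylor expansion \eqref{eqn:tilde f(x,y) expansion} and the positivity \eqref{Im f_x positive definite} to apply H\"ormander's complex stationary phase lemma, use the Hessian identity \eqref{real Hessian tilde fx} to convert to the operators $\bigl(\Phi_{x\xbar}''\bigr)^{-1}\partial_y\cdot\partial_{\ybar}$, obtain \eqref{Lj,x expansion} and \eqref{eqn:L_j Charles}, and then substitute the asymptotic expansion \eqref{Bergman asymptotic} and match powers of $h$. The only minor slip is in the final sentence: the operator $L_{j,x}$ carries the prefactor $\frac{\pi^n}{2^n\det\Phi_{x\xbar}''(x)}$ (not its reciprocal), and the factor $\frac{2^n\det\Phi_{x\xbar}''(x)}{\pi^n}$ appears because one divides the equation $\sum_{j+k=m}L_{j,x}a_k=0$ through by that common prefactor (equivalently, multiplies by its reciprocal), which happens to equal $a_0$; the resulting cancellation is exactly as you describe, so the argument is correct.
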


	\subsection{Estimating the Bergman coefficients}
	\label{subsection:Bergman coefficients estimates}
	
	In this subsection, our goal is to prove the following quantitative estimates on the growth rate (in $j$) of the Bergman coefficients.
	\begin{prop}\label{BK upper bounds prop}
		Let $s>1$. Let $\Omega\subset\CC^n$ and $\Phi\in\GG^s(\Omega)$ be as in Theorem \ref{main thm}. For any $x_0\in \Omega$, there exist a small neighborhood $U\Subset \Omega$ of $x_0$ and constant $C>0$ such that for any $\alpha, \beta\in\NN^n$ and any integer $m\geq 0$ we have
		\begin{equation}\label{BK upper bounds eq}
			\bigl| \partial^{\alpha}_x \partial_{\xbar}^{\beta} a_m(x, \xbar) \bigr|\leq C^{|\alpha|+|\beta|+m+1}\alpha!^s\beta!^s m!^{2s-1},\quad x\in U. 
		\end{equation}
	\end{prop}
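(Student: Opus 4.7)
The plan is to prove Proposition \ref{BK upper bounds prop} by induction on $m$, using the recursive formula \eqref{recursive formula eq}. The base case $m = 0$ is immediate from $a_0(x,\xbar) = \tfrac{2^n}{\pi^n}\det\Phi_{x\overline{x}}''(x)$: since $\Phi\in\mathcal{G}^s(\Omega;\mathbb{R})$ and the Gevrey-$s$ class is closed under differentiation, products, and polynomial operations, the required estimate holds on any sufficiently small neighborhood $U$ of $x_0$ with an appropriate constant.

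For the inductive step, I would assume \eqref{BK upper bounds eq} for all $m' < m$, apply $\partial_x^\alpha \partial_{\xbar}^\beta$ to the right-hand side of \eqref{recursive formula eq}, and expand via the Leibniz rule, distributing derivatives among the factors before specializing $y = x$. Each resulting term is, up to the constant prefactor $\tfrac{1}{2^j \mu!\nu!}$, a product of derivatives of three types of objects: (i) entries of $(\Phi_{x\overline{x}}''(x))^{-1}$, which lie in $\mathcal{G}^s(U)$ thanks to the strict plurisubharmonicity of $\Phi$ and the Fa\`a di Bruno formula applied to matrix inversion; (ii) derivatives of $\widetilde{g}_x^\mu(y)$ at $y = x$, which by \eqref{derivs tilde g_x} reduce to products of derivatives of $\Phi$ of order $\geq 3$ per factor and are therefore controlled by the Gevrey-$s$ bounds on $\Phi$; and (iii) derivatives of $a_{m-j}(x,\overline{y})$ at $y=x$, controlled by the induction hypothesis. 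The extra $x$-dependence arising from $\widetilde{g}_x(y)$ and from the first argument of $a_{m-j}(x,\overline{y})$ is handled by the chain rule before $y$ is set equal to $x$, contributing further products of derivatives of $\Phi$.

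The main obstacle is to recover the exponent $2s-1$, rather than $2s$, on $m!$. This is precisely where the denominator $\tfrac{1}{\mu!\nu!}$ in \eqref{recursive formula eq} becomes essential: because $\nu$ $y$-derivatives together with $\nu$ $\overline{y}$-derivatives are distributed among $\mu$ copies of $\widetilde{g}_x$ (each of which consumes at least one derivative of each type, and at least three derivatives in total), Leibniz combined with the Gevrey bound $|\partial^\gamma\Phi|\leq C^{|\gamma|+1}\gamma!^s$ produces factorials of order $(2\nu)!^s\sim \nu!^{2s}$, and pairing this with the denominator $\mu!\nu!$ (using $\mu\leq 2j$, $\nu\leq 3j$) yields the crucial factor $j!^{2s-1}$. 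This parallels the real-analytic computation carried out in \cite[Section 3]{hezari2021property}; the adaptation to $s>1$ amounts to inserting the exponent $s$ at the right places throughout the combinatorial bookkeeping, relying on the elementary estimates collected in Appendix \ref{Appendix: combinatorial facts}.

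Finally, to ensure that the constant $C$ in \eqref{BK upper bounds eq} is preserved from one step of the induction to the next, I would fix $C$ sufficiently large at the outset so that the finite sum over $j\in\{1,\dots,m\}$ and admissible $(\mu,\nu)$ is dominated by a convergent geometric series in $C^{-1}$, absorbing all combinatorial prefactors and closing the induction with the same $C$.
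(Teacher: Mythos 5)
Your proposal correctly identifies the base case, the role of recursion \eqref{recursive formula eq}, the three types of factors to be estimated via Gevrey bounds, and correctly locates the $\frac{1}{\mu!\nu!}$ prefactor as where the improvement to $m!^{2s-1}$ must come from. The flaw is in the stated arithmetic. You write that Leibniz combined with the Gevrey bound produces factorials of order $\nu!^{2s}$, and that pairing this with the denominator $\mu!\nu!$, using $\mu\leq 2j$, $\nu\leq 3j$, yields $j!^{2s-1}$. This is false: the quantity $\nu!^{2s}/(\mu!\nu!)=\nu!^{2s-1}/\mu!$ is not bounded by $C^j j!^{2s-1}$. Indeed for $\mu=2j$, $\nu=3j$, one has $\nu!^{2s-1}/\mu!\sim (3j)!^{2s-1}/(2j)!\sim C^j j!^{6s-5}$, which for $s>1$ exceeds $j!^{2s-1}$ by a factor roughly $j!^{4(s-1)}$. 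The $\frac{1}{\mu!\nu!}$ prefactor alone is therefore not enough; the naive bound $\alpha_1!\cdots\alpha_\mu!\leq\alpha!$ wastes too much.

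What saves the argument is precisely the observation you mention in passing but do not exploit quantitatively, namely that every factor $\widetilde{g}_x$ must receive at least one $y$-derivative and at least one $\ybar$-derivative (and at least three derivatives in total), because $\widetilde{g}_x$ and its first-order partials vanish at $y=x$, as recorded in \eqref{derivs tilde g_x}. This constraint upgrades the trivial bound $\alpha_1!\cdots\alpha_\mu!\leq\alpha!$ to $\alpha_1!\cdots\alpha_\mu!\leq n^{\mu n}\,\alpha!/\mu!$ when all $|\alpha_k|\geq 1$ (this is the Claim in the paper's proof of Lemma \ref{factor 2 lem}, which rests on the inequality $\binom{n_1+\cdots+n_m}{n_1,\ldots,n_m}\geq m!$ for $n_k\geq 1$). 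Applying this to both the $y$- and $\ybar$-partitions and raising to the power $s-1$ produces an additional factor $\mu!^{-(2s-2)}$ in the bound for $\partial_x^\gamma\partial_{\xbar}^\delta\bigl(\partial_y^\alpha\partial_{\ybar}^{\beta_1}\widetilde{g}_x^\mu\big|_{y=x}\bigr)$. Only then does one get $\frac{\nu!^{2s}}{\mu!\nu!\,\mu!^{2s-2}}=\frac{\nu!^{2s-1}}{\mu!^{2s-1}}\leq 2^{(2s-1)\nu}j!^{2s-1}$, via $\nu!=(\mu+j)!\leq 2^\nu\mu!\,j!$. Without this extra combinatorial gain your argument does not close. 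There is also a secondary issue you glide over: the recursion \eqref{recursive formula eq} trades off $m$ against the order of $\xbar$-derivatives (via $\beta_2$ with $|\beta_2|\leq j$), so a straightforward induction with a single fixed constant does not immediately close; the paper proves the strengthened hypothesis \eqref{BK upper bounds M eq} carrying a binomial factor $\binom{m+|\delta|}{m}^s$ and a separate parameter $M$ precisely to absorb this interplay.
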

	
	\begin{rem}\label{optimality rmk}
		The estimates in \eqref{BK upper bounds eq} is optimal in the sense that the exponent $2s-1$ cannot be reduced. By a result of Carleson \cite[Theorem 2 and Example 2]{carleson1961universal}, we can construct a function $\phi(r)\in \mathcal{G}^s(\mathbb{R})$ with the prescribed Taylor series $\sum_{j=2}^{\infty} j!^{2s-2}r^{2j}$ at $r=0$. Take $\Omega$ as a small disk $D(0, \delta)$ in $\mathbb{C}$ and $\Phi(x)=\frac{1}{2}|x|^2-\frac{1}{2}\varphi(|x|)$ for $x\in \Omega$. Clearly, $\Phi$ is strictly plurisubharmonic on $\Omega$ when $\delta>0$ is small enough. Since $\Phi$ is radial, $\{x^j\}_{j=0}^{\infty}$ forms an orthogonal basis of the Bergman space $H_{\Phi}(\Omega)$. Let $I_j=\int_{\Omega} |x|^{2j} e^{-2\Phi(x)/h}L(dx)$, then the Bergman kernel is given by $K(x, \xbar)=\sum_{j=0}^{\infty} |x|^{2j}/I_j$. In particular, $K(0, 0)=1/I_0$. A straightforward computation gives 
		\begin{equation*}
			I_0=\sum_{l=0}^{\infty}\int_{\Omega} e^{-\frac{1}{h}|x|^2}\frac{1}{h^ll!} \Bigl(\sum_{j=2}^{\infty} j!^{2s-2}|x|^{2j}\Bigr)^l L(dx)+\OO(h^{\infty}), \quad \mbox{ as } h\rightarrow 0. 
		\end{equation*}
		Recall the notation of majorant: for two formal series $\sum_{j=0}^{\infty} b_j h^j$ and $\sum_{j=0}^{\infty} c_j h^j$, we write $\sum_{j=0}^{\infty} b_j h^j >>_h \sum_{j=0}^{\infty} c_j h^j$ if $b_j\geq |c_j|$ for all $j$. Since all coefficients in $\sum_{j=2}^{\infty} j!^{2s-2}r^{2j}$ are nonnegative, we have
		\begin{align*}
			I_0>>_h &\sum_{l=0}^{1}\int_{\Omega} e^{-\frac{1}{h}|x|^2}\frac{1}{h^ll!} \Bigl(\sum_{j=2}^{\infty} j!^{2s-2}|x|^{2j}\Bigr)^l L(dx)
			\\=&\int_{\mathbb{C}} e^{-\frac{1}{h}|x|^2}L(dx)+\sum_{j=2}^{\infty}\frac{j!^{2s-2}}{h}\int_{\mathbb{C}} e^{-\frac{1}{h}|x|^2} |x|^{2j}L(dx)+\OO(h^{\infty})
			\\=&\pi h\Bigl(1+\sum_{j=2}^{\infty} j!^{2s-1} h^{j-1}\Bigr)+\OO(h^{\infty}). 
		\end{align*}
		Note taking the reciprocal will not change the Gevrey order (see \cite[Proposition 9.1]{HLX20}), we obtain that the Bergman coefficients $a_j(0, 0)$ cannot be bounded by $C^{j+1}j!^{2s'-1}$ for any $s'<s$. 
	\end{rem}
	
	\begin{proof}[Proof of Proposition \ref{BK upper bounds prop}]
		\textbf{Step 1.} We shall first simplify the recursive formula \eqref{recursive formula eq}. 
		
		Let $U\Subset \Omega$ be a small open neighborhood of $x_0\in\Omega$ where \eqref{recursive formula eq} holds. By denoting 
		\[
		g^{i\overline{j}} := \big((\Phi_{x\xbar}''(x))^{-1}\big)_{ji},\quad 1\leq i, j\leq n, 
		\]
		we have
		\begin{align*}
			((\Phi_{x\overline{x}}''(x))^{-1}\partial_y\cdot \partial_{\overline{y}})^\nu &= \Bigl( \sum_{i, j=1}^n g^{i\overline{j}}(x) \partial_{y_i}\partial_{\overline{y_j}} \Bigr)^{\nu} \\
			&=\sum_{1\leq i_1, j_1, \cdots i_{\nu}, j_{\nu}\leq n} g^{i_1\overline{j_1}}\cdots g^{i_{\nu}\overline{j_{\nu}}} \partial_{y_1}\cdots \partial_{y_{\nu}} \partial_{\overline{y_{j_1}}}\cdots \partial_{\overline{y_{j_{\nu}}}}.
		\end{align*}
		Denote $I=(i_1, i_2, \cdots, i_{\nu}), J=(j_1, j_2, \cdots, j_{\nu})$
		and $g^{I\overline{J}}=g^{i_1\overline{j_1}}g^{i_2\overline{j_2}}\cdots g^{i_{\nu}\overline{j_{\nu}}}$. Let $\{e_j\}_{j=1}^n$ be the standard basis vectors in $\mathbb{R}^n$. Consider the map 
		\begin{equation*}
			\Gamma: \{1, \cdots, n\}^{\nu} \times \{1, \cdots, n\}^{\nu} \rightarrow \{(\alpha, \beta)\in \NN^n\times \NN^n: |\alpha|=|\beta|=\nu\}
		\end{equation*}
		defined by $\Gamma(I, J)=(e_{i_1}+\cdots e_{i_{\nu}}, e_{j_1}+\cdots e_{j_{\nu}})$. The map $\Gamma$ is clearly surjective. For any $(\alpha, \beta)$ and $\nu$, set 
		\begin{equation}\label{Aab eq}
			A_{\alpha\beta\nu}:=\Gamma^{-1}(\alpha, \beta)=\left\{(I,J):
			\begin{array}{ll}
				e_{i_1}+e_{i_2}+\cdots+e_{i_v}=\alpha\\ e_{j_1}+e_{j_2}+\cdots+e_{j_v}=\beta
			\end{array} \right\}.
		\end{equation}
		Then we can write 
		\begin{equation*}       
			((\Phi_{x\overline{x}}''(x))^{-1}\partial_y\cdot\partial_{\overline{y}})^\nu=\sum_{|\alpha|=|\beta|=\nu}\sum_{A_{\alpha\beta\nu}} g^{I\overline{J}} \partial_y^{\alpha} \partial_{\overline{y}}^{\beta}.
		\end{equation*}
		In view of \eqref{a_j almost holomorphic} we have
		\begin{align*}
			{ }&\quad((\Phi_{x\overline{x}}''(x))^{-1}\partial_y\cdot\partial_{\overline{y}})^\nu\bigl(\widetilde{g}_x^\mu(y)\,a_{m-j}(x,\overline{y})\bigr)\Big|_{y=x} \\
			&=\sum_{|\alpha|=|\beta|=\nu}\sum_{A_{\alpha\beta\nu}} g^{I\overline{J}} \sum_{\beta_1+\beta_2=\beta}\binom{\beta}{\beta_1,\beta_2}\partial_y^{\alpha}\partial_{\ybar}^{\beta_1} \widetilde{g}_x^\mu\big|_{y=x} \partial_{\xbar}^{\beta_2}a_{m-j}(x,\xbar).
		\end{align*}
		Note that $\partial^{\alpha}_{y} \widetilde{g}_x\big|_{y=x}=0$ for any $\alpha$ by \eqref{derivs tilde g_x}, thus, among all the terms on the right side of the above equation, we only need to consider those with $|\beta_1|\geq \mu$, as $\partial_y^{\alpha}\partial_{\ybar}^{\beta_1}\widetilde{g}_x^\mu\big|_{y=x}$ vanishes otherwise. Hence, we can assume $|\beta_2|=|\beta|-|\beta_1|\leq \nu-\mu=j$. 
		
		By the above computations, \eqref{recursive formula eq} implies
		\begin{align}\label{recursive formula 2 eq}
			a_m=-\sum_{j=1}^m\sum_{\substack{\nu-\mu=j \\ 2\nu\geq 3\mu}}\frac{(-1)^{\mu}}{2^j\mu!\nu!}\sum_{|\alpha|=|\beta|=\nu}\sum_{A_{\alpha\beta\nu}} g^{I\overline{J}} \sum_{\substack{\beta_1+\beta_2= \beta\\ |\beta_2|\leq j}}\binom{\beta}{\beta_1,\beta_2}\partial_y^{\alpha}\partial_{\ybar}^{\beta_1} \widetilde{g}_x^\mu\big|_{y=x} \, \partial_{\xbar}^{\beta_2}a_{m-j}.
		\end{align} 
		Since in \eqref{recursive formula 2 eq} the Bergman coefficient $a_m$ relies on not only the previous ones but also their derivatives, in order to estimate $a_m$ recursively, we need to estimate their derivatives at the same time. In this regard, we apply $\partial_x^{\gamma}\partial^{\delta}_{\xbar}$ to both sides of \eqref{recursive formula 2 eq} and obtain a recursive formula for the derivatives of $a_m(x,\xbar)$ as follows. 
		\begin{align}\label{recursive formula on derivatives eq}
			\begin{split}
				\partial_x^{\gamma}\partial_{\xbar}^{\delta}a_m
				=-&\sum_{j=1}^m\sum_{\substack{\nu-\mu=j\\ 2\nu\geq 3\mu}}\frac{(-1)^{\mu}}{2^j\mu!\nu!}
				\sum_{\substack{\gamma_1+\gamma_2+\gamma_3=\gamma \\ \delta_1+\delta_2+\delta_3=\delta}}\binom{\gamma}{\gamma_1,\gamma_2,\gamma_3}\binom{\delta}{\delta_1,\delta_2,\delta_3}\sum_{|\alpha|=|\beta|=\nu}\sum_{A_{\alpha\beta\nu}} 
				\\
				&\partial_x^{\gamma_1}\partial_{\xbar}^{\delta_1}g^{I\overline{J}}\cdot\sum_{\substack{\beta_1+\beta_2=\beta\\|\beta_2|\leq j}}\binom{\beta}{\beta_1,\beta_2} \partial_x^{\gamma_2}\partial_{\xbar}^{\delta_2}\left(\partial_y^{\alpha}\partial_{\ybar}^{\beta_1} \widetilde{g}_x^\mu \big|_{y=x} \right) \partial_x^{\gamma_3}\partial_{\xbar}^{\beta_2+\delta_3}a_{m-j}.
			\end{split}
		\end{align}
		
		\noindent
		\textbf{Step 2.} We shall estimate the factors $\partial_x^{\gamma_1}\partial_{\xbar}^{\delta_1}g^{I\overline{J}}$ and $\partial_x^{\gamma_2}\partial_{\xbar}^{\delta_2}\left(\partial_y^{\alpha}\partial_{\ybar}^{\beta_1} \widetilde{g}_x^\mu \big|_{y=x} \right)$ in \eqref{recursive formula on derivatives eq}, so that we can further simplify \eqref{recursive formula on derivatives eq} to derive a recursive inequality on the pointwise norms of derivatives of the Bergman coefficients $a_j(x,\xbar)$. We separate the estimates of $\partial_x^{\gamma_1}\partial_{\xbar}^{\delta_1}g^{I\overline{J}}$ and $\partial_x^{\gamma_2}\partial_{\xbar}^{\delta_2}\left(\partial_y^{\alpha}\partial_{\ybar}^{\beta_1} \widetilde{g}_x^\mu \big|_{y=x} \right)$ into the following two lemmas.
		
		\begin{lem}\label{factor 1 lem}
			Let $\alpha, \beta\in \NN^n$ such that $|\alpha|=|\beta|=\nu$, and let $(I,J)\in A_{\alpha\beta\nu}$ given in \eqref{Aab eq}. For $U\Subset \Omega$, there exists a positive constant $C$ such that for any multi-indices $\gamma,\delta\in\NN^n$ and any $x\in U$, we have
			\begin{align*}
				\left|\partial_x^{\gamma}\partial_{\xbar}^{\delta}\, g^{I\overline{J}}(x)\right|\leq C^{|\gamma+\delta|+\nu}\gamma!^s\delta!^s.
			\end{align*}
		\end{lem}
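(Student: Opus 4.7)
\textbf{Proof proposal for Lemma \ref{factor 1 lem}.} The plan is to reduce the estimate for the product $g^{I\overline{J}}$ to a Gevrey bound for each individual entry $g^{i\overline{j}}$ of the inverse Hessian, and then control the product via a Leibniz expansion combined with standard multi-index combinatorial inequalities.

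First, I will verify that each entry $g^{i\overline{j}} = \big((\Phi_{x\xbar}''(x))^{-1}\big)_{ji}$ belongs to $\GG^s(U)$ for a sufficiently small neighborhood $U\Subset \Omega$ of $x_0$. Since $\Phi\in\GG^s(\Omega;\RR)$, all the entries of $\Phi_{x\xbar}''$ are themselves in $\GG^s$; by strict plurisubharmonicity of $\Phi$, the determinant $\det(\Phi_{x\xbar}'')$ is bounded below on $U$, and applying Cramer's rule together with the stability of the Gevrey class $\GG^s$ under products, sums, and reciprocals of nonvanishing functions (the latter being the reciprocal fact already invoked in Remark~\ref{optimality rmk}) yields $g^{i\overline{j}}\in\GG^s(U)$ with uniform constants for $1\le i,j\le n$. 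Since the Gevrey estimate is formulated in terms of real derivatives on $\RR^{2n}$, I expand each Wirtinger derivative $\partial_{x_k} = \frac{1}{2}(\partial_{u_k}-i\partial_{v_k})$, $\partial_{\xbar_k} = \frac{1}{2}(\partial_{u_k}+i\partial_{v_k})$ (with $x=u+iv$) by the binomial theorem, note that the total number of resulting real-derivative terms in $\partial_x^\gamma\partial_{\xbar}^\delta$ is at most $2^{|\gamma|+|\delta|+n}$ each with coefficient of modulus $\leq 1$, and use the elementary bound $(\gamma+\delta)! \leq 2^{|\gamma|+|\delta|}\gamma!\,\delta!$ to conclude that for some constants $A_1, C_1>0$,
\begin{equation*}
    |\partial_x^{\gamma}\partial_{\xbar}^{\delta} g^{i\overline{j}}(x)|\leq A_1 C_1^{|\gamma|+|\delta|}\gamma!^s \delta!^s,\quad x\in U,\ \gamma,\delta\in\NN^n.
\end{equation*}

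With this single-factor estimate in hand, I will handle $g^{I\overline{J}} = g^{i_1\overline{j_1}}\cdots g^{i_\nu\overline{j_\nu}}$ via the Leibniz rule
\begin{equation*}
    \partial_x^{\gamma}\partial_{\xbar}^{\delta}\, g^{I\overline{J}} = \sum_{\substack{\gamma_1+\cdots+\gamma_\nu=\gamma \\ \delta_1+\cdots+\delta_\nu=\delta}} \binom{\gamma}{\gamma_1,\ldots,\gamma_\nu}\binom{\delta}{\delta_1,\ldots,\delta_\nu}\prod_{k=1}^\nu \partial_x^{\gamma_k}\partial_{\xbar}^{\delta_k} g^{i_k\overline{j_k}}.
\end{equation*}
Plugging in the bound above gives a prefactor $A_1^\nu C_1^{|\gamma|+|\delta|}$, while the multinomial coefficients combined with $\prod_k (\gamma_k!\delta_k!)^s$ simplify to $\gamma!\,\delta!\prod_k (\gamma_k!\delta_k!)^{s-1}$. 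Since every multinomial coefficient is at least $1$, we have $\prod_k \gamma_k! \leq \gamma!$ and $\prod_k \delta_k! \leq \delta!$, so $\prod_k(\gamma_k!\delta_k!)^{s-1} \leq (\gamma!\delta!)^{s-1}$, collapsing each term to $A_1^\nu C_1^{|\gamma|+|\delta|}(\gamma!\delta!)^s$. The number of decompositions of $(\gamma,\delta)$ is estimated component-wise by $\prod_i \binom{\gamma_i+\nu-1}{\nu-1}\binom{\delta_i+\nu-1}{\nu-1}\leq 2^{|\gamma|+|\delta|+2n(\nu-1)}$, using the standard bound $\binom{a+b}{b}\leq 2^{a+b}$ recorded in Appendix~\ref{Appendix: combinatorial facts}. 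Combining these and choosing $C = \max(2^{2n}A_1,\,2C_1)$ yields the claimed bound $C^{|\gamma|+|\delta|+\nu}\gamma!^s \delta!^s$.

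The main conceptual ingredient is the Gevrey closure under matrix inversion in the first step; after that, everything reduces to careful but routine multi-index bookkeeping. The technical obstacle to watch for is ensuring that the constant in the product bound depends on $\nu$ only through a factor $C^\nu$ rather than, say, $\nu!$ or $\nu^\nu$: this is exactly what the inequality $\prod_k \gamma_k! \leq \gamma!$ (absorbing one factorial power) and the partition-counting bound $2^{|\gamma|+|\delta|+2n(\nu-1)}$ deliver, and it is precisely this feature that will be crucial downstream for iterating the recursive formula \eqref{recursive formula on derivatives eq} without blowing up the Gevrey order in $m$.
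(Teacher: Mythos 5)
Your proof is correct and takes essentially the same route as the paper's: reduce to a termwise Gevrey-$s$ bound on each entry $g^{i\overline{j}}$ via the stability of $\GG^s$ under matrix inversion, then expand the product $g^{I\overline{J}}$ by the Leibniz rule, absorb the multinomial coefficients using $\prod_k\gamma_k!\le\gamma!$ and $\prod_k\delta_k!\le\delta!$, and count the decompositions via Proposition~\ref{prop:partition multi-index counting} to get the factor $2^{|\gamma|+|\delta|+2n(\nu-1)}$. The extra detail you supply about Cramer's rule and the Wirtinger-to-real-derivative conversion merely unpacks what the paper states tersely; the core argument is identical.
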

		
		\begin{proof}
			Since $\Phi\in \GG^s(\Omega)$, each entry of the matrix $\Phi_{x\xbar}''(x)$ is also in $\GG^s(\Omega)$. In addition, since $\Phi$ is strictly plurisubharmonic, $\Phi_{x\xbar}''(x)$ is positive definite, and it follows that each entry of the inverse matrix $(\Phi_{x\xbar}''(x))^{-1}$ also lies in $\GG^s(\Omega)$. Thus, for $U\Subset \Omega$ there exists $C>0$ such that for any $1\leq i, j \leq n$, and any $\gamma, \delta \geq 0$, we have
			\begin{equation*}
				\left|\partial_x^{\gamma}\partial_{\xbar}^{\delta}\, g^{i\overline{j}}(x)\right|\leq C^{|\gamma+\delta|+1}\gamma!^s \delta!^s,\quad x\in U.
			\end{equation*} 
			Since 
			\begin{equation*}
				\partial_x^{\gamma}\partial_{\xbar}^{\delta}\, g^{I\overline{J}}=\sum_{\substack{\gamma_1+\cdots+\gamma_{\nu}=\gamma \\ \delta_1+\cdots +\delta_{\nu}=\delta}} \binom{\gamma}{\gamma_1, \cdots, \gamma_{\nu}} \binom{\delta}{\delta_1, \cdots, \delta_{\nu}}\partial_x^{\gamma_1} \partial_{\xbar}^{\delta_1} g^{i_1\overline{j_1}}\cdots \partial_x^{\gamma_{\nu}}\partial_{\xbar}^{\delta_{\nu}}g^{i_{\nu}\overline{j_\nu}},
			\end{equation*}
			we have
			\begin{align*}
				\left| \partial_x^{\gamma}\partial_{\xbar}^{\delta}\, g^{I\overline{J}}\right|\leq &\sum_{\substack{\gamma_1+\cdots+\gamma_{\nu}=\gamma \\ \delta_1+\cdots +\delta_{\nu}=\delta}} \binom{\gamma}{\gamma_1, \cdots, \gamma_{\nu}} \binom{\delta}{\delta_1, \cdots, \delta_{\nu}} C^{|\gamma+\delta|+\nu} \gamma_1!^s\delta_1!^s\cdots \gamma_{\nu}!^s\delta_{\nu}!^s\\
				=& C^{|\gamma+\delta|+\nu}\gamma!\delta! \sum_{\substack{\gamma_1+\cdots+\gamma_{\nu}=\gamma \\ \delta_1+\cdots +\delta_{\nu}=\delta}} \gamma_1!^{s-1}\delta_1!^{s-1}\cdots \gamma_{\nu}!^{s-1}\delta_{\nu}!^{s-1}. 
			\end{align*}
			By the facts $\gamma_1!\cdots\gamma_{\nu}!\leq \gamma!$, $\delta_1!\cdots\delta_{\nu}!\leq \delta!$ and Proposition \ref{prop:partition multi-index counting}, we conclude
			\begin{equation*}
				\left| \partial_x^{\gamma}\partial_{\xbar}^{\delta}\, g^{I\overline{J}}\right|\leq 2^{|\gamma|+|\delta|+2n(\nu-1)}C^{|\gamma|+|\delta|+\nu}\gamma!^s\delta!^s.
			\end{equation*}
			Hence the result follows by renaming $2^{2n}C$ to be $C$.
		\end{proof}
		
		\begin{lem}\label{factor 2 lem}
			For $U\Subset \Omega$, there exists a positive constant $C$ such that for any multi-indices $\alpha,\beta, \gamma, \delta\in \NN^n$, any integer $\mu\geq 1$ and any $x\in U$, we have
			\begin{align*}
				\left|\partial_x^{\gamma}\partial_{\xbar}^{\delta}\bigl(\partial_y^{\alpha}\partial_{\ybar}^{\beta} \widetilde{g}_x^\mu\big|_{y=x}\bigr)\right|\leq C^{|\alpha+\beta+\gamma+\delta|+\mu+1}\alpha!^s\beta!^s\gamma!^s\delta!^s\slash \mu!^{2s-2}.
			\end{align*}
		\end{lem}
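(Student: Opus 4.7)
The strategy is to exploit the identity \eqref{derivs tilde g_x}, which says that after evaluation at $y=x$, every surviving mixed derivative of $\widetilde{g}_x$ in $y,\ybar$ coincides with a derivative of $\Phi$ at $x$, and then to pass two applications of the Leibniz rule through the resulting product of $\Phi$-factors, so as to convert the estimate into a controllable combinatorial sum of Gevrey-$s$ bounds.

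First, I apply the multinomial Leibniz rule to the power $\widetilde{g}_x^\mu$ and evaluate at $y=x$ using \eqref{derivs tilde g_x}; this gives
\begin{equation*}
\partial_y^\alpha\partial_{\ybar}^\beta \widetilde{g}_x^\mu\big|_{y=x}
 = \sum \binom{\alpha}{\alpha^{(1)},\ldots,\alpha^{(\mu)}}\binom{\beta}{\beta^{(1)},\ldots,\beta^{(\mu)}} \prod_{i=1}^\mu \partial_x^{\alpha^{(i)}}\partial_{\xbar}^{\beta^{(i)}}\Phi(x),
\end{equation*}
the sum being over partitions $\sum_i\alpha^{(i)}=\alpha$, $\sum_i\beta^{(i)}=\beta$ subject to $|\alpha^{(i)}|,|\beta^{(i)}|\geq 1$ and $|\alpha^{(i)}+\beta^{(i)}|\geq 3$. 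In particular the sum is empty (so the left side is $0$) unless $|\alpha|,|\beta|\geq\mu$, which I may hence assume. Applying $\partial_x^\gamma\partial_{\xbar}^\delta$ and distributing the derivatives via a second Leibniz rule over the product of $\mu$ factors of $\Phi$, I obtain a quadruple sum indexed by partitions of $\alpha,\beta,\gamma,\delta$ into $\mu$ parts (the partitions of $\gamma,\delta$ being otherwise unconstrained), whose integrand is $\prod_i\partial_x^{\alpha^{(i)}+\gamma^{(i)}}\partial_{\xbar}^{\beta^{(i)}+\delta^{(i)}}\Phi(x)$. Invoking the Gevrey-$s$ estimate on $\Phi$ on $U\Subset\Omega$, together with the elementary bound $(\sigma+\tau)!^s\leq 2^{s|\sigma+\tau|}\sigma!^s\tau!^s$ applied to each factor, and cancelling $\prod\alpha^{(i)}!\beta^{(i)}!\gamma^{(i)}!\delta^{(i)}!$ against the multinomial coefficients, the task reduces to bounding
\begin{equation*}
\alpha!\beta!\gamma!\delta!\sum \prod_{i=1}^\mu \alpha^{(i)}!^{s-1}\beta^{(i)}!^{s-1}\gamma^{(i)}!^{s-1}\delta^{(i)}!^{s-1},
\end{equation*}
with overall constant $A^\mu C^{|\alpha+\beta+\gamma+\delta|}$ for some $A, C>0$.

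The key combinatorial input is the scalar inequality: if $a_1,\ldots,a_\mu\geq 1$ are positive integers with $a_1+\cdots+a_\mu=N$, then $\prod_i a_i!\leq N!/\mu!$, which follows from an easy induction using $\binom{N}{a_\mu}\geq N\geq\mu$. Combined with $\alpha^{(i)}!\leq|\alpha^{(i)}|!$ and the multinomial estimate $|\alpha|!\leq n^{|\alpha|}\alpha!$, and using that $|\alpha^{(i)}|\geq 1$, this yields
\[
\prod_i \alpha^{(i)}!^{s-1}\leq C^{|\alpha|(s-1)}\,\alpha!^{s-1}\mu!^{-(s-1)},
\]
and the identical bound with $\beta$ in place of $\alpha$. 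For the unconstrained partitions of $\gamma$ and $\delta$ one simply uses $\prod\gamma^{(i)}!^{s-1}\leq\gamma!^{s-1}$ and $\prod\delta^{(i)}!^{s-1}\leq\delta!^{s-1}$, with no $\mu!$-gain. Combined with a crude bound $C^{|\alpha+\beta+\gamma+\delta|+\mu}$ on the number of quadruples of partitions, these estimates yield precisely the claimed inequality. The main obstacle is the combinatorial bookkeeping: one must correctly identify that the factor $\mu!^{-(2s-2)}$ arises from the two independent constraints $|\alpha^{(i)}|\geq 1$ and $|\beta^{(i)}|\geq 1$, each contributing a $\mu!^{-(s-1)}$ factor through the scalar inequality above, and one must make sure that the factorials split across the two Leibniz expansions without inadvertently losing or double-counting these gains.
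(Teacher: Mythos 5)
Your proof is correct and follows essentially the same route as the paper: two applications of the Leibniz rule to reduce to products of derivatives of $\Phi$ at $x$, the bound $(\sigma+\tau)!\leq 2^{|\sigma+\tau|}\sigma!\tau!$ to decouple, and a $\mu!$-gain extracted once for $\alpha$ and once for $\beta$ from the constraints $|\alpha^{(i)}|,|\beta^{(i)}|\geq 1$. The only minor divergence is in how the key combinatorial estimate $\prod_i\alpha^{(i)}!\leq C^{|\alpha|+\mu}\alpha!/\mu!$ is obtained: you route it through the scalar inequality $\prod_i a_i!\leq N!/\mu!$ (for scalar parts $a_i\geq 1$) combined with $\alpha!\leq|\alpha|!\leq n^{|\alpha|}\alpha!$, whereas the paper works component-by-component, counting the number of indices $k$ with nonzero $m$-th coordinate and invoking $\binom{a_1+\cdots+a_n}{a_1,\ldots,a_n}\geq m!$; your version is a bit more streamlined. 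Both correctly isolate that the denominator $\mu!^{2s-2}$ comes from the constrained $\alpha$ and $\beta$ partitions only, with the unconstrained $\gamma,\delta$ partitions contributing no factorial gain.
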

		
		\begin{proof}
			By a straightforward computation, we have
			\begin{align*}
				\partial_y^{\alpha}\partial_{\overline{y}}^{\beta}\widetilde{g}_x^\mu\big|_{y=x}
				=\sum_{\substack{\alpha_1+\cdots+\alpha_{\mu}=\alpha\\
						\beta_1+\cdots+\beta_{\mu}=\beta}}\binom{\alpha}{\alpha_1,\cdots,\alpha_{\mu}}\binom{\beta}{\beta_1,\cdots,\beta_{\mu}}\prod_{k=1}^{\mu}\partial_y^{\alpha_k}\partial_{\overline{y}}^{\beta_k}\widetilde{g}_x\big|_{y=x}.
			\end{align*}
			Let us recall \eqref{derivs tilde g_x} that
			\begin{equation*}
				\partial_{y}^{\alpha}\partial_{\ybar}^{\beta}\widetilde{g}_x \big|_{y=x}=\begin{cases}
					\partial_x^{\alpha}\partial_{\xbar}^{\beta}\Phi(x) & \mbox{when } |\alpha|\geq 1, |\beta|\geq 1 \mbox{ and } |\alpha+\beta|\geq 3,\\
					0 & \mbox{otherwise}.
				\end{cases}
			\end{equation*}
			If we define 
			\begin{align*}
				B_{\alpha\beta\mu}=\left\{\{\alpha_k\}_{k=1}^{\mu},\{\beta_k\}_{k=1}^{\mu}: \begin{array}{ll}
					\alpha_1+\cdots+\alpha_{\mu}=\alpha,\\
					\beta_1+\cdots+\beta_{\mu}=\beta,\\
					|\alpha_k|\geq 1, |\beta_k|\geq 1,
					|\alpha_k+\beta_k|\geq 3
				\end{array}\right\},
			\end{align*} 
			then
			\begin{align*}
				\partial_y^{\alpha}\partial_{\ybar}^{\beta}\widetilde{g}_x^\mu\big|_{y=x}
				=\sum_{B_{\alpha\beta\mu}}\binom{\alpha}{\alpha_1,\cdots,\alpha_{\mu}}\binom{\beta}{\beta_1,\cdots,\beta_{\mu}}\prod_{k=1}^{\mu}\partial_x^{\alpha_k}\partial_{\xbar}^{\beta_k}\Phi(x).
			\end{align*}
			By taking more derivatives, we have
			\begin{align*}
				{}&\quad\quad\partial_x^{\gamma}\partial_{\xbar}^{\delta}\bigl(\partial_y^{\alpha}\partial_{\ybar}^{\beta}\widetilde{g}_x^\mu\big|_{y=x}\bigr)
				\\
				&=\sum_{C_{\gamma\delta\mu}}\sum_{B_{\alpha\beta\mu}}\binom{\gamma}{\gamma_1 \cdots \gamma_{\mu}}\binom{\delta}{\delta_1 \cdots \delta_{\mu}}\binom{\alpha}{\alpha_1 \cdots \alpha_{\mu}}\binom{\beta}{\beta_1 \cdots \beta_{\mu}}\prod_{k=1}^{\mu}\partial_x^{\alpha_k+\gamma_k}\partial_{\xbar}^{\beta_k+\delta_k}\Phi(x),
			\end{align*}
			where
			\begin{align*}
				C_{\gamma\delta\mu}=\left\{\{\gamma_j\}_{j=1}^{\mu},\{\delta_j\}_{j=1}^{\mu}: \begin{array}{ll}
					\gamma_1+\gamma_2+\cdots+\gamma_{\mu}=\gamma\\
					\delta_1+\delta_2+\cdots+\delta_{\mu}=\delta	\end{array}\right\}.
			\end{align*} 
			Since $\Phi\in \GG^s(\Omega)$, for $U\Subset \Omega$ there exists $C>0$ such that for any multi-indices $\alpha,\beta,\gamma,\delta$, any integer $\mu \geq 1$ and any $x\in K$, we have
			\begin{align*}
				{}&\quad\left|\partial_x^{\gamma}\partial_{\xbar}^{\delta}\bigl(\partial_y^{\alpha}\partial_{\ybar}^{\beta}\widetilde{g}_x^\mu\big|_{y=x}\bigr)\right|
				\\
				&\leq C^{|\alpha+\beta+\gamma+\delta|+\mu}\sum_{C_{\gamma\delta\mu}}\sum_{B_{\alpha\beta\mu}}\binom{\gamma}{\gamma_1 \cdots \gamma_{\mu}}\binom{\delta}{\delta_1 \cdots \delta_{\mu}}\binom{\alpha}{\alpha_1 \cdots \alpha_{\mu}}\binom{\beta}{\beta_1 \cdots \beta_{\mu}} \\
				&\qquad\qquad\qquad\qquad\qquad\qquad\cdot\prod_{k=1}^{\mu}(\alpha_k+\gamma_k)!^s(\beta_k+\delta_k)!^s
				\\
				&\leq C^{|\alpha+\beta+\gamma+\delta|+\mu}\alpha!\beta!\gamma!\delta!\sum_{C_{\gamma\delta\mu}}\sum_{B_{\alpha\beta\mu}}2^{s|\alpha+\beta+\gamma+\delta|}\prod_{k=1}^{\mu}\bigl(\alpha_k!\beta_k!\gamma_k!\delta_k!\bigr)^{s-1}.
			\end{align*}
			The above second inequality follows from the following elementary facts
			\begin{align*}
				\binom{\alpha_k+\gamma_k}{\alpha_k , \gamma_k}\leq 2^{|\alpha_k+\gamma_k|}, \quad \binom{\beta_k+\delta_k}{\beta_k , \delta_k}\leq 2^{|\beta_k+\delta_k|}.
			\end{align*}
			\textbf{Claim.} We have $\alpha_1!\cdots \alpha_{\mu}!\leq n^{\mu n}\alpha!\slash \mu!$ and $\beta_1!\cdots \beta_{\mu}!\leq n^{\mu n}\beta!\slash \mu!$.
			
			For $1\leq k\leq \mu$, we denote $\alpha_k=(\alpha_k^1, \cdots, \alpha_k^n)$. Then
			\begin{equation*}
				\prod_{k=1}^\mu \alpha_k! = \prod_{k=1}^\mu \prod_{m=1}^n \alpha_k^m ! \leq \prod_{m=1}^n \frac{(\alpha_1^m+\cdots+\alpha_{\mu}^m)!}{(\#\{1\leq k\leq \mu: \alpha_k^m\neq 0\})!} = \frac{\alpha!}{a_1\cdots a_n},	
			\end{equation*}
			with $a_m:=\#\bigl\{1\leq k\leq \mu: \alpha_k^m\neq 0\bigr\}$ for $1\leq m\leq n$. Here we have used the following combinatorial inequality:
			\[
			\binom{n_1+\cdots+n_m}{n_1,\ldots,n_m} \geq m!,\quad\text{if }n_k\geq 1,\ 1\leq k\leq m.
			\]
			We can prove the above inequality directly as follows:
			\[
			\begin{split}
				\binom{n_1+\cdots+n_m}{n_1,\ldots,n_m} = &\binom{n_1}{n_1}\binom{n_1+n_2}{n_2}\cdots\binom{n_1+n_2+\cdots+n_m}{n_m} \\
				\geq & (n_1+n_2)\cdots(n_1+n_2+\cdots+n_m) \geq 2\times\cdots\times m = m! .
			\end{split}
			\]
			Since $|\alpha_k|\geq 1$ for any $1\leq k\leq \mu$, we get $a_1+\cdots +a_n \geq \mu$. By the elementary fact $\binom{a_1+\cdots+a_n}{a_1, \cdots, a_n}\leq n^{a_1+\cdots+a_n}$, and noting the trivial bound $a_1+\cdots+a_n \leq n\mu$, we obtain
			\begin{align*}
				a_1!\cdots a_n!\geq n^{-(a_1+\cdots+a_n)} (a_1+\cdots+a_n)!\geq n^{-n\mu} \mu!.
			\end{align*}  
			It follows that $\alpha_1!\cdots \alpha_{\mu}!\leq \alpha!\slash(n^{-n\mu} \mu!) = n^{\mu n}\alpha!\slash \mu!$. Similarly, we can also verify the second inequality in the claim.
			
			By the above claim we further have
			\begin{align*}
				\left|\partial_x^{\gamma}\partial_{\xbar}^{\delta}\bigl(\partial_y^{\alpha}\partial_{\ybar}^{\beta}\widetilde{g}_x^\mu\big|_{y=x}\bigr)\right|
				&\leq
				C^{|\alpha+\beta+\gamma+\delta|+\mu}\alpha!\beta!\gamma!\delta!\sum_{C_{\gamma\delta\mu}}\sum_{B_{\alpha\beta\mu}}2^{s|\alpha+\beta+\gamma+\delta|}n^{2n\mu}\frac{(\alpha!\beta!\gamma!\delta!)^{s-1}}{\mu!^{2s-2}}
				\\
				&=(2^sC)^{|\alpha+\beta+\gamma+\delta|} (n^{2n}C)^{\mu}\frac{(\alpha!\beta!\gamma!\delta!)^s}{\mu!^{2s-2}} (\#B_{\alpha\beta\mu})(\#C_{\gamma\delta\mu}).
			\end{align*}
			By Proposition \ref{prop:partition multi-index counting}, we have $\# B_{\alpha\beta\mu}\leq 2^{|\alpha|+|\beta|+2n(\mu-1)}$, $\# C_{\gamma\delta\mu}\leq 2^{|\gamma|+|\delta|+2n(\mu-1)}$. Therefore,
			\begin{align*}
				\left|\partial_x^{\gamma}\partial_{\xbar}^{\delta}\bigl(\partial_y^{\alpha}\partial_{\ybar}^{\beta}\widetilde{g}_x^\mu\big|_{y=x}\bigr)\right|\leq (2^{s+1}C)^{|\alpha+\beta+\gamma+\delta|} (2^{4n}n^{2n}C)^{\mu}\frac{(\alpha!\beta!\gamma!\delta!)^s}{\mu!^{2s-2}}.
			\end{align*}
			So the desired result follows by renaming $2^{4n+s+1}n^{2n}C$ to be $C$.
		\end{proof}
		
		\noindent
		\textbf{Step 3.} Let us now come back to \eqref{recursive formula on derivatives eq}. By the above two lemmas, there exists $C>0$ such that for any multi-indices $\gamma, \delta\geq 0$ and any integer $m\geq 0$ we have on $U$
		\begin{align*}
			\left| \partial_x^{\gamma}\partial_{\xbar}^{\delta}a_m\right|
			\leq &\sum_{j=1}^m\sum_{\substack{\nu-\mu=j\\ 2\nu\geq 3\mu}}\frac{1}{2^j \mu!\nu!}
			\sum_{\substack{\gamma_1+\gamma_2+\gamma_3=\gamma \\ \delta_1+\delta_2+\delta_3=\delta}}\binom{\gamma}{\gamma_1,\gamma_2,\gamma_3}\binom{\delta}{\delta_1,\delta_2,\delta_3}\sum_{|\alpha|=|\beta|=\nu}\sum_{A_{\alpha\beta\nu}} \sum_{\substack{\beta_1+\beta_2=\beta\\|\beta_2|\leq j}}
			\\
			&\binom{\beta}{\beta_1,\beta_2}C^{|\alpha+\beta_1+\gamma_1+\gamma_2+\delta_1+\delta_2|+\nu+\mu+1}\frac{(\alpha!\beta_1!\gamma_1!\gamma_2!\delta_1!\delta_2!)^s}{\mu!^{2s-2}}  \left| \partial_x^{\gamma_3}\partial_{\xbar}^{\beta_2+\delta_3}a_{m-j}\right|.
		\end{align*}
		After simplification, using $\alpha!, \beta!\leq \nu!$, it writes into
		\begin{align*}
			\frac{\left| \partial_x^{\gamma}\partial_{\xbar}^{\delta}a_m(x,\xbar)\right|}{\gamma!^s\delta!^s}
			\leq \sum_{j=1}^m 2^{-j} \sum_{\substack{\nu-\mu=j\\ 2\nu\geq 3\mu}}\frac{\nu!^{2s-1}}{\mu!^{2s-1}}
			&\sum_{\substack{\gamma_1+\gamma_2+\gamma_3=\gamma \\ \delta_1+\delta_2+\delta_3=\delta}}\sum_{|\alpha|=|\beta|=\nu}\sum_{A_{\alpha\beta\nu}} \sum_{\substack{\beta_1+\beta_2=\beta\\|\beta_2|\leq j}} \\ 
			&\quad C^{|\gamma-\gamma_3|+|\delta-\delta_3|+4\nu} \frac{\left| \partial_x^{\gamma_3}\partial_{\xbar}^{\beta_2+\delta_3}a_{m-j}\right|}{\gamma_3!^s\beta_2!^s\delta_3!^s}.
		\end{align*}
		Note that $\#A_{\alpha\beta\nu}\leq n^{2\nu}$ by \eqref{Aab eq} and that $\nu! = (\mu+j)! \leq 2^{\nu}\mu!j!$. Moreover, we observe that for any $0\leq \gamma_3\leq \gamma$, $\#\left\{(\gamma_1, \gamma_2): \gamma_1+\gamma_2=\gamma-\gamma_3 \right\}\leq 2^{|\gamma-\gamma_3|}$. Using these facts we deduce that
		\begin{align*}
			\frac{\left| \partial_x^{\gamma}\partial_{\xbar}^{\delta}a_m\right|}{\gamma!^s\delta!^s}
			\leq \sum_{j=1}^m \frac{j!^{2s-1}}{2^j}\sum_{\nu=j}^{3j}\sum_{|\alpha|=|\beta|=\nu}\sum_{\substack{\gamma_3\leq\gamma\\ \delta_3\leq \delta}} \sum_{\substack{\beta_2\leq \beta\\|\beta_2|\leq j}} (2^{2s-1}n^2C^4)^{\nu}(2C)^{|\gamma-\gamma_3|+|\delta-\delta_3|} 
			\frac{\left| \partial_x^{\gamma_3}\partial_{\xbar}^{\beta_2+\delta_3}a_{m-j}\right|}{\gamma_3!^s\beta_2!^s\delta_3!^s}.
		\end{align*}
		By renaming $(2^{2s-1}n^2C^4)^3$ to be $C$, we obtain a recursive inequality on derivatives of the Bergman coefficients: 
		\begin{equation}\label{recursive inequality eq}
			\frac{\left| \partial_x^{\gamma}\partial_{\xbar}^{\delta}a_m\right|}{\gamma!^s\delta!^s}
			\leq \sum_{j=1}^m \frac{j!^{2s-1}}{2^j}\sum_{\nu=j}^{3j}\sum_{|\alpha|=|\beta|=\nu}\sum_{\substack{\gamma_3\leq\gamma\\ \delta_3\leq \delta}} \sum_{\substack{\beta_2\leq \beta\\|\beta_2|\leq j}} C^{|\gamma-\gamma_3|+|\delta-\delta_3|+j}
			\frac{\left| \partial_x^{\gamma_3}\partial_{\xbar}^{\beta_2+\delta_3}a_{m-j}\right|}{\gamma_3!^s\beta_2!^s\delta_3!^s}.
		\end{equation}
		
		\noindent
		\textbf{Step 4.} We will use \eqref{recursive inequality eq} to prove the desired estimates \eqref{BK upper bounds eq} on the Bergman coefficients $a_m(x,\xbar)$, $m\in \mathbb{N}$.
		
		\noindent
		\textbf{Claim.} There exists $M>0$  such that for any multi-indices $\gamma, \delta\geq 0$, any integer $m\geq 0$ and any $x\in U$ we have
		\begin{equation}\label{BK upper bounds M eq}
			\left|\partial_x^{\gamma}\partial_{\xbar}^{\delta}a_m(x, \xbar)\right|\leq \binom{m+|\delta|}{m}^s M^{m+1} (2C)^{|\gamma+\delta|} \gamma!^s\delta!^s m!^{2s-1},
		\end{equation}
		where $C$ is the same constant as in \eqref{recursive inequality eq}.
		
		Suppose that we have proved the claim, it follows that 
		\begin{align*}
			\left|\partial_x^{\gamma}\partial_{\xbar}^{\delta}a_m\right|\leq 2^{(m+|\delta|)s} M^{m+1} (2C)^{|\gamma+\delta|} \gamma!^s\delta!^s m!^{2s-1}\leq (2^{s+1}CM)^{|\gamma+\delta|+m+1} \gamma!^s\delta!^sm!^{2s-1}.
		\end{align*}
		So the estimates \eqref{BK upper bounds eq} follows by renaming $2^{s+1}CM$ to be $C$.
		
		In the rest, we will verify the above claim by induction. For $m=0$ and any multi-indices $\gamma, \delta\in\NN^n$, since $a_0(x,\xbar)=\frac{2^n}{\pi^n}\det \Phi_{x\xbar}''(x)\in \GG^s(U)$ by \eqref{recursive formula eq}, \eqref{BK upper bounds M eq} follows immediately. Now we assume \eqref{BK upper bounds M eq} holds up to $m-1$ and proceed to $m$. By \eqref{recursive inequality eq}, we deduce that
		\begin{align}\label{bm 1 eq}
			\begin{split}
				\frac{\left| \partial_x^{\gamma}\partial_{\xbar}^{\delta}a_m\right|}{(2C)^{|\gamma+\delta|}M^{m+1}\gamma!^s\delta!^s}
				\leq &\sum_{j=1}^m\sum_{\nu=j}^{3j}j!^{2s-1}\sum_{\substack{\gamma_3\leq\gamma\\ \delta_3\leq \delta}}  2^{-|\gamma-\gamma_3|-|\delta-\delta_3|}(2C^2\slash M)^j \sum_{|\alpha|=|\beta|=\nu}\sum_{\substack{\beta_2\leq \beta\\|\beta_2|\leq j}}
				\\&\quad \binom{\beta_2+\delta_3}{\beta_2}^s\binom{m-j+|\beta_2+\delta_3|}{m-j}^s(m-j)!^{2s-1}.
			\end{split}
		\end{align}
		Let us recall the combinatorial inequality $\binom{\beta_2+\delta_3}{\beta_2}\leq \binom{|\beta_2+\delta_3|}{|\beta_2|}$ and the identity
		\begin{equation*}
			\binom{m-j+|\beta_2+\delta_3|}{|\beta_2+\delta_3|}\binom{|\beta_2+\delta_3|}{|\beta_2|}
			=\binom{m-j+|\beta_2+\delta_3|}{|\delta_3|}\binom{m-j+|\beta_2|}{|\beta_2|}.
		\end{equation*}
		Since $|\beta_2|\leq j$ and $|\delta_3|\leq |\delta|$, we have
		\begin{equation*}
			\binom{m-j+|\beta_2+\delta_3|}{|\beta_2+\delta_3|}\binom{|\beta_2+\delta_3|}{|\beta_2|}
			\leq \binom{m+|\delta|}{|\delta|}\binom{m}{j}.
		\end{equation*}
		Plugging this into \eqref{bm 1 eq}, after simplification we get
		\begin{align*}
			\frac{\left| \partial_x^{\gamma}\partial_{\xbar}^{\delta}a_m\right|}{(2C)^{|\gamma+\delta|}M^{m+1}\gamma!^s\delta!^s}
			\leq &\binom{m+|\delta|}{|\delta|}^sm!^{2s-1} \sum_{j=1}^m\sum_{\nu=j}^{3j}\sum_{\substack{\gamma_3\leq\gamma\\ \delta_3\leq \delta}}  2^{-|\gamma_3|-|\delta_3|}\bigl(\frac{2C^2}{M}\bigr)^j \\
			& \quad\cdot\sum_{|\alpha|=|\beta|=\nu}\#\{\beta_2: \beta_2\leq \beta\}.
		\end{align*}
		Clearly,
		\begin{align*}
			\sum_{|\alpha|=|\beta|=\nu} \#\{\beta_2:\beta_2\leq \beta\}
			\leq \sum_{|\alpha|=|\beta|=\nu}2^{|\beta|}=\binom{\nu+n-1}{n-1}^22^{\nu}\leq 2^{3\nu+2n-2}.
		\end{align*}
		It follows that
		\begin{align*}
			\frac{\left| \partial_x^{\gamma}\partial_{\xbar}^{\delta} a_m (x,\xbar) \right|}{(2C)^{|\gamma+\delta|}M^{m+1}\gamma!^s\delta!^s}
			\leq& \binom{m+|\delta|}{|\delta|}^sm!^{2s-1} \sum_{j=1}^m\bigl(\frac{2C^2}{M}\bigr)^j \sum_{\nu=j}^{3j} 2^{3\nu+2n-2} \sum_{\gamma_3\leq \gamma}2^{-|\gamma_3|}\sum_{\delta_3\leq \delta}2^{-|\delta_3|}\\
			\leq& \binom{m+|\delta|}{|\delta|}^sm!^{2s-1} \sum_{j=1}^m\bigl(\frac{2C^2}{M}\bigr)^j \sum_{\nu=j}^{3j} 2^{3\nu+4n-2}\\
			\leq & \binom{m+|\delta|}{|\delta|}^sm!^{2s-1}2^{4n-1} \sum_{j=1}^m\bigl(\frac{2^{10}C^2}{M}\bigr)^j
		\end{align*}
		By choosing $M=2^{4n+10}C^2$, we have $2^{4n-1} \sum_{j=1}^m(2^{10}C^2\slash M)^j\leq 1$ and thus \eqref{BK upper bounds M eq} holds for $m$. Therefore, the induction is concluded and the proof is completed.			
	\end{proof}
	
	
	\subsection{The amplitude as an inversion of Fourier integral operator $A$}
	We have proved the estimates \eqref{BK upper bounds eq} on the Bergman coefficients $a_j(x,\xbar)$, $x\in U\Subset\Omega$, with $U$ being a small neighborhood of $x_0\in\Omega$. We shall now use Proposition \ref{prop:Gevrey almost holo extension} to extend $a_j$ off-diagonally, almost holomorphically to a neighborhood of $\{(x,\xbar) : x\in U\}$ in $\CC^{2n}$ given by
	\begin{equation}
		\label{complex neighborhood tilde U}
		\widetilde{U} = \{(x+z , \xbar-\zbar) : x\in U,\ z\in B_{\CC^n}(0,\rho) \} \subset \CC^{2n},
	\end{equation} 
	where $B_{\CC^n}(0,\rho)$ denotes the open ball in $\CC^n$, centered at $0$ with radius $\rho>0$. Moreover, we aim to find an almost holomorphic extension in $\GG_{\rm b}^s(\widetilde{U})$.
	
	To achieve that, let us identify the anti-diagonal subspace $\Lambda\subset \CC^{2n}$ with $\RR^{2n}$ using the holomorphic linear change of variables:
	\begin{equation}
		\label{eqn:kappa}
		\kappa : \CC^{2n}\owns (x,y) \mapsto \left(\frac{x+y}{2}, \frac{x-y}{2i}\right) \in \CC^{2n},
	\end{equation}
	then $\kappa(\{(x,\xbar) : x\in U\}) = U$. Here we shall view $U$ as a subset of $\RR^{2n}\cong \CC^n$, equalizing $x\in \CC^n$ and $(\Re x, \Im x)\in\RR^{2n}$. Let $U_1$ be a complex neighborhood of $U$ in $\CC^{2n}$,
	\[
	U_1 := \{x+iy : x\in U\subset\RR^{2n},\ y\in\RR^{2n},\,|y|<\rho\} \subset \CC^{2n}
	\]
	for some $\rho>0$, then we see that $U_1\cap\RR^{2n} = U$, $\kappa^{-1}(U_1) = \widetilde{U}$ given in \eqref{complex neighborhood tilde U}. The estimates \eqref{BK upper bounds eq} show that $(\kappa^{-1})^* a_j \in \GG_{\rm b}^s(U)$, and that there exist $C, R>0$ such that 
	\begin{equation}
		\label{a_j Gevrey semi-norm}
		\|(\kappa^{-1})^* a_j\|_{s,R,U} \leq C^{1+j} j!^{2s-1},\quad j\in\NN .
	\end{equation}
	By Proposition \ref{prop:Gevrey almost holo extension}, each $(\kappa^{-1})^* a_j$ admits an almost holomorphic extension $b_j \in \GG_{\rm b}^s(U_1)$. Let us now set
	\begin{equation*}
		\label{defn:a_j(x,y) Gevrey}	
		a_j(x,y) := b_j(\kappa(x,y)),\quad (x,y)\in\widetilde{U}\subset\CC^{2n},\ j\in\NN ,
	\end{equation*}
	this then extends $a_j(x,\xbar)$, $x\in U$ almost holomorphically to $\widetilde{U}$, in the sense that $\overline{\partial} a_j$ is flat along the anti-diagonal. Moreover, it follows from \eqref{estimate almost holo ext Gevrey semi-norm} and \eqref{a_j Gevrey semi-norm} that there exist $C_1, R_1>0$ such that
	\begin{equation*}
		\label{a_j(x,y) Gevrey semi-norm}
		\|a_j\|_{s,R_1,\widetilde{U}} \leq C_1^{1+j} j!^{2s-1},\quad j\in\NN .
	\end{equation*}
	Renaming $\max(C_1,R_1)$ to be $C$, we conclude that $\{a_j\}_{j=0}^\infty$ forms a formal $\GG^{s,2s-1}$ symbol (see Definition \ref{defi:formal Gevrey symbol}) on $\widetilde{U}$ satisfying
	\begin{equation}
		\label{a_j(x,y) as formal Gevrey symbol}
		\|\partial^\alpha a_j\|_{L^\infty(\widetilde{U})} \leq C^{1+|\alpha|+j}\alpha!^s j!^{2s-1},\quad \alpha\in\NN^{4n},\ j\in\NN .
	\end{equation}
	Here we identify $\CC^{2n}$ with $\RR^{4n}$ so $\partial$ includes $\partial_{\Re x_i}$, $\partial_{\Im x_i}$, $\partial_{\Re y_i}$, $\partial_{\Im y_i}$,  $i=1,2,\ldots,n$. 
	
	Proposition \ref{prop:Borel lemma Gevrey} shows that we can realize $\{a_j\}_{j=0}^\infty$ as a Gevrey function, i.e., there exist $a = a(x,y;h)\in \GG_{\rm b}^s(\widetilde{U})$ and $\widetilde{C}>0$ such that
	\begin{equation}
		\label{amplitude a(x,y;h) Gevrey asymp}
		\biggl\| \partial^\alpha \big(a - \sum_{j=0}^{N-1} a_j h^j\big) \biggr\|_{L^\infty(\widetilde{U})} \leq \widetilde{C}^{1+|\alpha|+N} \alpha!^s N!^{2s-1} h^N ,\quad\alpha\in\NN^{4n},\ N\in \NN .
	\end{equation}
	We note that $\partial_{\xbar,\ybar} a(x,y;h)$ is flat along the anti-diagonal $\{y=\xbar\}$, in view of \eqref{eqn:a realizing a_j} and the flatness of $\overline{\partial} a_j$, $j\in\NN$. It then follows from Proposition \ref{prop:flatness estimtate} and the estimate \eqref{amplitude a(x,y;h) Gevrey asymp} (with $N=0$) that there exists $C>0$ such that
	\begin{equation}
		\label{amplitude dbar estimate}
		|\partial_{\xbar,\ybar} a(x,y;h)| \leq C \exp \left(-C^{-1} |x - \ybar|^{-\frac{1}{s-1}} \right) .
	\end{equation}
	
	Let us recall that $\Omega\subset\CC^n$ is a pseudoconvex domain, $\Phi\in\GG^s(\Omega;\RR)$ is strictly plurisubharmonic on $\Omega$, and $U\Subset\Omega$ is a neighborhood of $x_0\in\Omega$. Noting that $\Phi\in\GG_{\rm b}^s(U)$, we obtain again by Propositions \ref{prop:Gevrey almost holo extension} and \ref{prop:flatness estimtate}, and the holomorphic linear map $\kappa$ given in \eqref{eqn:kappa}, a polarization $\Psi\in\GG_{\rm b}^s(\widetilde{U})$ (with $\widetilde{U}$ of the form \eqref{complex neighborhood tilde U}) satisfying   
	\begin{equation}
		\label{Psi as polarization Gevrey}
		\Psi(x,\xbar) = \Phi(x),\quad x\in U;
	\end{equation}
	\begin{equation}
		\label{Psi almost holomorphic Gevrey}
		|\partial_{\overline{x},\ybar}\Psi(x,y)| \leq C \exp\big(-C^{-1}|x-\ybar|^{-\frac{1}{s-1}}\big),\quad (x,y)\in\widetilde{U} .
	\end{equation} 
	
	In view of the quantitative flatness of $\overline{\partial} a$ and $\overline{\partial} \Psi$ given in \eqref{amplitude dbar estimate} and \eqref{Psi almost holomorphic Gevrey} respectively, we can prove a Gevrey version of Proposition \ref{prop:independent of contour} with an improved error term estimate.  
	\begin{prop}\label{prop: independent of contour Gevrey}
		There exists an open neighborhood $V_0\Subset \Omega$ of $x_0$ such that for any two good contours $\Gamma_j(y,\ybar)$ for $y\in V_0$, $j=0,1$, with respect to the phase $(x,\widetilde{y})\mapsto \phi(y,\ybar;x,\widetilde{y})$ given in \eqref{phase phi defn}, with the amplitude $a(x,\widetilde{y};h)$ satisfying \eqref{amplitude a(x,y;h) Gevrey asymp} and \eqref{amplitude dbar estimate}, we have uniformly for any $y\in V_0$
		\begin{equation*}
			(A_{\Gamma_0} a)(y,\ybar;h) - (A_{\Gamma_1} a)(y,\ybar;h) = \OO(1) \exp\left(-C^{-1}h^{-\frac{1}{2s-1}}\right),
		\end{equation*}
		where $C$ is a positive constant depending only on $a$, $\varphi$ and $s$.
	\end{prop}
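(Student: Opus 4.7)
The plan is to mimic the classical contour-deformation argument behind Proposition \ref{prop:independent of contour} but to replace the role of the qualitative flatness of $\overline{\partial}a$ and $\overline{\partial}\Psi$ by the quantitative Gevrey-flatness estimates \eqref{amplitude dbar estimate} and \eqref{Psi almost holomorphic Gevrey}. Concretely, I would interpolate the two good contours by a smooth family $\{\Gamma_t(y,\ybar)\}_{t\in[0,1]}$, for example $\Gamma_t = (1-t)\Gamma_0 + t\Gamma_1$ under a common parametrization by $z$ in a neighborhood of $0\in\CC^n$, and shrink $V_0$ so that the convexity of the set of good contours guarantees that each $\Gamma_t$ satisfies
\[
\Re\phi(y,\ybar;x,\widetilde y)\leq -C^{-1}\dist((x,\widetilde y),(y,\ybar))^2 \quad\text{on } \Gamma_t(y,\ybar),
\]
uniformly in $t\in[0,1]$ and $y\in V_0$. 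Let $\mathcal{G}(y,\ybar) := \bigcup_{t\in[0,1]}\Gamma_t(y,\ybar)$ be the $(2n{+}1)$-chain swept out by this homotopy.

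Next, I would apply Stokes' theorem to the $(2n,0)$-form $\omega = e^{\frac{2}{h}\phi(y,\ybar;x,\widetilde y)}a(x,\widetilde y;h)\,dx\wedge d\widetilde y$. Since $dx\wedge d\widetilde y$ is of maximal holomorphic type, $d\omega = \overline{\partial}_{x,\widetilde y}\omega$, and
\[
(A_{\Gamma_0}a)(y,\ybar;h)-(A_{\Gamma_1}a)(y,\ybar;h) = \frac{1}{(2ih)^n}\int_{\mathcal{G}(y,\ybar)} e^{\frac{2}{h}\phi}\Bigl[\frac{2}{h}\bigl(\overline{\partial}_{x,\widetilde y}\phi\bigr)a + \overline{\partial}_{x,\widetilde y}a\Bigr]\wedge dx\wedge d\widetilde y,
\]
plus edge contributions from $\partial\Gamma_t$, which decay like $\exp(-C^{-1}/h)$ due to the good-contour estimate outside a neighborhood of $(x,\widetilde y)=(y,\ybar)$ and are thus negligible. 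Recalling that $\phi$ is assembled from values of $\Psi$, each factor $\overline{\partial}_{x,\widetilde y}\phi$ is a difference of $\overline{\partial}\Psi$ terms evaluated at points whose second coordinate distance from the anti-diagonal is comparable to $r:=\dist((x,\widetilde y),(y,\ybar))$ on $\mathcal{G}$, so \eqref{Psi almost holomorphic Gevrey} yields $|\overline{\partial}_{x,\widetilde y}\phi| \leq C\exp(-C^{-1}r^{-1/(s-1)})$, and \eqref{amplitude dbar estimate} similarly gives $|\overline{\partial}_{x,\widetilde y}a|\leq C\exp(-C^{-1}r^{-1/(s-1)})$.

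Combining these with $|e^{\frac{2}{h}\phi}|\leq \exp(-C^{-1}r^2/h)$ yields a pointwise bound for the integrand by $Ch^{-1}\exp\!\bigl(-C^{-1}(r^2/h + r^{-1/(s-1)})\bigr)$, and balancing the two terms (the Gevrey minimization Lemma \ref{lem:Gevrey minimize} cited in the paper) gives the optimal scale $r\sim h^{(s-1)/(2s-1)}$ with minimal value $\sim h^{-1/(2s-1)}$. Integrating over the bounded $(2n{+}1)$-chain $\mathcal{G}$ then produces the claimed $\OO(1)\exp(-C^{-1}h^{-1/(2s-1)})$ bound, possibly after absorbing the factor $h^{-1}$ into a slightly larger constant $C$ in the exponent.

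The main obstacle is the geometric step of ensuring that the family $\Gamma_t$ remains good uniformly in $t$, and that on every $\Gamma_t$ the distance $|\widetilde y-\overline{x}|$ to the anti-diagonal (which controls the Gevrey flatness) is comparable to $\dist((x,\widetilde y),(y,\ybar))$ (which controls $\Re\phi$); both are straightforward for the affine contour $\Gamma_0$ from \eqref{good contour: affine}, but must be verified for the homotopy. Once this is arranged, the rest is bookkeeping of the competing exponential factors $r^2/h$ and $r^{-1/(s-1)}$, which is precisely the balance responsible for the exponent $1/(2s-1)$ appearing throughout the paper.
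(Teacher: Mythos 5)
Your proposal follows the same route as the paper's proof: interpolate between $\Gamma_0$ and $\Gamma_1$ by a family of good contours, apply Stokes' theorem to $\omega=e^{\frac{2}{h}\phi}\,a\,dx\wedge d\widetilde y$ over the swept-out $(2n{+}1)$-chain, control the edge terms by the good-contour estimate $\Re\phi\leq -1/C$ (giving a negligible $\OO(e^{-1/(Ch)})$ contribution), bound $\overline\partial_{x,\widetilde y}\omega$ by combining the Gaussian decay $|e^{\frac{2}{h}\phi}|\leq e^{-r^2/(Ch)}$ with the Gevrey-flatness estimates for $\overline\partial a$ and $\overline\partial\Psi$ along the chain, and finally minimize $r^2/h + r^{-1/(s-1)}$ to get the exponent $h^{-1/(2s-1)}$. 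All of these steps are carried out in the paper essentially as you describe.

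The one genuine gap is in your construction of the homotopy. You propose the naive affine interpolation $\Gamma_t = (1-t)\Gamma_0 + t\Gamma_1$ directly in $\CC^n_x\times\CC^n_{\widetilde y}$ and assert that ``the convexity of the set of good contours'' (perhaps after shrinking $V_0$) keeps each $\Gamma_t$ good. This is not evidently true: near a hyperbolic critical point the function $-\Re\phi$ is close to the indefinite quadratic form $\tfrac12(s^2-t^2)$ and is \emph{not} convex, and one cannot fix this by shrinking $V_0$, since the issue occurs for each fixed $y$. The paper instead passes to Morse coordinates (via the reference \cite[Proposition 2.3]{hitrik2022smooth}), where $\Re\phi = \tfrac12(t^2-s^2)$ exactly and good contours are precisely graphs $t=g(y;s)$ with $|g(y;s)|\leq\alpha|s|$, $\alpha<1$. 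In that description the set of good contours \emph{is} convex in the graph function $g$, so the interpolation $t=\theta g_1 + (1-\theta)g_0$ is automatically good. Note that this Morse-coordinate interpolation is not the same as your affine interpolation of the points in $\CC^{2n}$, since the change of variables $\gamma_y$ is nonlinear. You correctly flag this as the ``main obstacle''; to close it you need the Morse normal form, not a convexity claim in the ambient space. A second, smaller point you also flag -- that the distance to the anti-diagonal $|\xbar-\widetilde y|$ used in the Gevrey flatness estimates is comparable to $\dist((x,\widetilde y),(y,\ybar))$ -- is handled in the paper by the elementary triangle inequality $|\xbar-\widetilde y|\leq\sqrt 2\,\dist((x,\widetilde y),(y,\ybar))$, which holds along any good contour and requires no extra verification.
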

	
	\begin{proof}
		According to the proof of \cite[Proposition 2.3]{hitrik2022smooth} (see also \cite[Chapter 3]{Sj82} and \cite[Chapter 1]{De92}), good contours are homotopic to each other through a family of uniformly good contours. To be precise, we recall that, by taking a sufficiently small neighborhood $V_0\Subset \Omega$ of $x_0$, there exist the Morse coordinates $(t,s)\in\neigh(0,\RR^{2n}\times\RR^{2n})$ with the smooth mapping 
		\[
		\gamma_y : (t,s)\mapsto (x(t,s),\widetilde{y}(t,s))\in \CC^{n}\times\CC^n,
		\] 
		depending smoothly on $y\in V_0$ as well, such that
		\[
		\Re\phi(y,\ybar;x(t,s),\widetilde{y}(t,s)) = \frac{1}{2}(t^2 - s^2).
		\]
		Furthermore, in the Morse coordinates $(t,s)$, every good contour $\Gamma(y,\ybar)$ takes the form
		\begin{equation}
			\label{good contour condition Morse coordinates}
			t = g(y;s),\quad s\in\neigh(0,\RR^{2n});\quad |g(y;s)|\leq \alpha |s|,\quad 0<\alpha<1.
		\end{equation}
		We therefore assume that good contours $\Gamma_j(y,\ybar)$, $j=0,1$ are parameterized by
		\[
		\Gamma_j(y,\ybar) : \neigh(0,\RR^{2n})\owns s \mapsto \gamma_y(g_j(y;s),s) \in\CC_x^n\times\CC_{\widetilde{y}}^n ,
		\]
		where the smooth mappings $g_j(y;\cdot)$, $j=0,1$ satisfy \eqref{good contour condition Morse coordinates}. It follows that $\Gamma_0(y,\ybar)$ and $\Gamma_1(y,\ybar)$ are homotopic via a family of uniformly good contours $\Gamma_{\theta}(y, \ybar)$, $\theta\in [0, 1]$, with the following parametrization
		\[
		\Gamma_\theta(y,\ybar) : \neigh(0,\RR^{2n})\owns s \mapsto \gamma_y(\theta g_1(y;s) + (1-\theta)g_0(y;s),s) \in\CC_x^n\times\CC_{\widetilde{y}}^n ,
		\] 
		and we have
		\begin{equation}\label{Gaussian phase along good contours}
			\Re \phi(y, \ybar;x,\widetilde{y}) \leq -C^{-1} \dist((x,\widetilde{y}),(y,\ybar))^2,\quad\forall (x,\widetilde{y})\in\Gamma_{\theta}(y,\ybar),\ \theta\in[0,1],
		\end{equation}
		where $C$ is a positive constant uniform for $\theta\in [0, 1]$ and $y\in V_0$. 
		
		In the rest of the proof, we will use $C$ to denote a positive constant independent from $\theta\in [0, 1]$ and $y\in V_0$, which could change from line to line. Letting 
		\[
		G_{[0, 1]}(y, \ybar) = \{\gamma_y(\theta g_1(y;s) + (1-\theta)g_0(y;s),s) : s\in\neigh(0,\RR^{2n}),\ 0\leq\theta\leq 1\},
		\]
		with the orientation given by the map $(s,\theta)\mapsto \gamma_y(\theta g_1(y;s) + (1-\theta)g_0(y;s),s)$, there exists $C>0$ such that for any $y\in V_0$, 
		\begin{equation*}
			\partial G_{[0, 1]}(y, \ybar)\setminus(\Gamma_1(y, \ybar)- \Gamma_0(y, \ybar))\subset \{(x, \widetilde{y}): \Re \phi(y, \ybar;x,\widetilde{y}) \leq -C^{-1}\}.
		\end{equation*}
		Therefore, in view of Proposition \ref{prop:Borel lemma Gevrey}, by setting $\omega:=e^{\frac{2}{h} \phi(y,\ybar;x,\widetilde{y})} a(x,\widetilde{y};h) \,dx \wedge d\widetilde{y}$ we have uniformly for $y\in V_0$,
		\begin{equation}\label{good contour eq}
			\int_{\partial G_{[0,1]}(y,\ybar)} \omega=\int_{\Gamma_1(y,\ybar)} \omega - \int_{\Gamma_0(y, \ybar)} \omega + \OO(e^{-\frac{1}{Ch}}).
		\end{equation}
		On the other hand, by Stokes' formula, 
		\begin{equation}\label{Stoke thm eq}
			\int_{\partial G_{[0,1]}(y,\ybar)} \omega=\int_{G_{[0,1]}(y,\ybar)} d\omega=\int_{G_{[0,1]}(y,\ybar)} \oo{\partial}\bigl(e^{\frac{2}{h} \phi(y,\ybar;x,\widetilde{y})} a(x,\widetilde{y};h)\bigr) \wedge dx \wedge d\widetilde{y}.
		\end{equation}
		(Rigorously speaking, we should apply Stokes' formula to the pull-back of $\omega$ under the map $(s,\theta)\mapsto \gamma_y(\theta g_1(y;s) + (1-\theta)g_0(y;s),s)$ on $\neigh_s(0,\RR^{2n})\times [0,1]_\theta$, whose image is $G_{[0,1]}(y,\ybar)$, but we omit the details here for simplicity.)
		
		\noindent 
		In view of \eqref{phase phi defn} and \eqref{Psi almost holomorphic Gevrey}, for any $1\leq j \leq n$, we have
		\begin{align*}
			\partial_{\xbar_j}\varphi(y, \ybar; x, \widetilde{y})&= \partial_{\xbar_j}\Psi(x,\widetilde{y}) - \partial_{\xbar_j}\Psi(x,\ybar)
			\\&= \OO(1) \Bigl(\exp\left(C^{-1}|x-\oo{\widetilde{y}}|^{-\frac{1}{s-1}}\right) + \exp\left(C^{-1}|x-y|^{-\frac{1}{s-1}}\right)\Bigr).
		\end{align*}
		Note $|x-\oo{\widetilde{y}}|\leq |x-y|+|y-\oo{\widetilde{y}}|\leq \sqrt{2}\,\dist((x,\widetilde{y}),(y,\ybar))$. We thus obtain
		\begin{equation*}
			\partial_{\xbar_j}\varphi(y, \ybar; x, \widetilde{y})=\OO(1) \exp\left(C^{-1}\dist((x,\widetilde{y}),(y,\ybar))^{-\frac{1}{s-1}}\right).
		\end{equation*}
		Similarly, we also have
		\begin{equation*}
			\partial_{\oo{\widetilde{y}}_j}\varphi(y, \ybar; x, \widetilde{y}) = \OO(1) \exp\left(C^{-1}\dist((x,\widetilde{y}),(y,\ybar))^{-\frac{1}{s-1}}\right) .
		\end{equation*}
		Following the same steps as above, we can deduce from \eqref{amplitude dbar estimate} that for $1\leq j\leq n$,
		\begin{equation*}
			\partial_{\xbar_j, \oo{\widetilde{y}}_j} a(x, \widetilde{y}; h) = \OO(1) \exp\left(C^{-1}\dist((x,\widetilde{y}),(y,\ybar))^{-\frac{1}{s-1}}\right) .
		\end{equation*}
		Combining the above estimates and \eqref{Gaussian phase along good contours}, we obtain uniformly along $G_{[0, 1]}(y, \ybar)$,
		\begin{equation*}
			\overline{\partial}\bigl(e^{\frac{2}{h} \phi(y,\ybar;x,\widetilde{y})} a(x,\widetilde{y};h)\bigr)=\OO(1) \exp\Bigl(-\frac{1}{Ch} \bigl(t^2 + h t^{-\frac{1}{s-1}}\bigr) \Bigr),\quad t=\dist((x,\widetilde{y}),(y,\ybar)) . 
		\end{equation*}
		We note by a straightforward computation that
		\begin{equation}
			\label{the minimum}
			\min_{t>0} \big(t^2 + h t^{-\frac{1}{s-1}}\big) = \frac{2s-1}{2s-2} (2s-2)^{\frac{1}{2s-1}} h^{1-\frac{1}{2s-1}}.
		\end{equation}
		We therefore conclude that there exists $C>0$ such that 
		\begin{equation*}
			\overline{\partial}\bigl(e^{\frac{2}{h} \phi(y,\ybar;x,\widetilde{y})} a(x,\widetilde{y};h)\bigr) = \OO(1) \exp\bigl(-C^{-1}h^{-\frac{1}{2s-1}}\bigr),\quad (x,\widetilde{y})\in G_{[0,1]}(y,\ybar),\ y\in V_0.
		\end{equation*}
		Combining these with \eqref{good contour eq} and \eqref{Stoke thm eq}, we finally get
		\begin{equation*}
			\int_{\Gamma_2(y,\ybar)} \omega-\int_{\Gamma_1(y, \ybar)} \omega= \OO(1)\Bigl(\exp\bigl(-C^{-1} h^{-\frac{1}{2s-1}}\bigr) + \exp\bigl(-C^{-1} h^{-1}\bigr)\Bigr).
		\end{equation*}
		So the desired result follows by noticing that $\frac{1}{2s-1}<1$ since $s>1$.
	\end{proof}
	We are now ready to prove the main result of this section:
	\begin{thm}
		\label{thm:asymp invert A}
		Let $\Omega\subset\CC^n$ be open and let $\Phi\in\GG^s(\Omega;\RR)$ be strictly plurisubharmonic in $\Omega$. For each $x_0\in\Omega$, there exists an elliptic symbol $a(x,\widetilde{y};h)\in\GG_{\rm b}^s(\neigh((x_0,\overline{x_0}),\CC^{2n}))$ realizing the following formal $\GG^{s,2s-1}$ symbol $\{a_j\}_{j=0}^{\infty}$ (see Definition \ref{defi:formal Gevrey symbol}):
		\[
		a(x,\widetilde{y};h) \sim \sum_{j=0}^{\infty} a_j(x,\widetilde{y}) h^j,
		\]
		in the sense of \eqref{amplitude a(x,y;h) Gevrey asymp}, with $a_j\in \GG_{\rm b}^s(\neigh((x_0,\overline{x_0}),\CC^{2n}))$ satisfying \eqref{a_j(x,y) as formal Gevrey symbol} and being holomorphic to $\infty$--order along the anti-diagonal $\widetilde{y}=\xbar$, such that
		\begin{equation}
			\label{eqn:Aa Gevrey}
			\begin{split}
				(A_\Gamma a)(y,\ybar;h) &= \frac{C_n}{h^n} \int_{\Gamma(y,\ybar)} e^{\frac{2}{h}\phi(y,\ybar;x,\widetilde{y})} a(x,\widetilde{y};h)\,dx\,d\widetilde{y} \\
				&= 1 + \OO(1)\exp\bigl(-C^{-1}h^{-\frac{1}{2s-1}}\bigr),
			\end{split}
			\quad y\in\neigh(x_0,\CC^n).
		\end{equation}
		Here $\phi$ is defined in \eqref{phase phi defn} with the polarization $\Psi$ of $\Phi$ satisfying \eqref{Psi as polarization Gevrey} and \eqref{Psi almost holomorphic Gevrey}; $\Gamma(y,\ybar)$ is a good contour with respect to the phase function $(x,\widetilde{y})\mapsto \phi(y,\ybar;x,\widetilde{y})$. The restriction of the $a_j$'s to the anti-diagonal $\widetilde{y} = \xbar$ are uniquely determined for $j\in\NN$, satisfying both recursive formulas \eqref{recursive formula HS} and \eqref{recursive formula eq}. 
	\end{thm}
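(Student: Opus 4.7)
The plan is to construct the amplitude $a(x, \widetilde{y}; h)$ from the formal asymptotic series of Bergman coefficients, and then verify the identity \eqref{eqn:Aa Gevrey} using the Gevrey complex stationary phase method. First, define $a_j(x, \xbar)$ on the anti-diagonal via the recursive formula \eqref{recursive formula eq}, which gives a unique sequence with $a_0 = 2^n \det \Phi''_{x\overline{x}}/\pi^n$. Proposition \ref{BK upper bounds prop} delivers the Gevrey-$s$ bound $|\partial^\alpha_x \partial^\beta_{\xbar} a_j| \leq C^{|\alpha|+|\beta|+j+1} \alpha!^s \beta!^s j!^{2s-1}$. Identifying the anti-diagonal with $\RR^{2n}$ through the biholomorphism $\kappa$ of \eqref{eqn:kappa}, I would invoke Proposition \ref{prop:Gevrey almost holo extension} to extend each $a_j$ almost holomorphically to a complex neighborhood $\widetilde{U}$ of $(x_0, \overline{x_0})$ in $\GG_{\rm b}^s(\widetilde{U})$, preserving the Gevrey constants. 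The resulting family $\{a_j\}_{j=0}^\infty$ is then a formal $\GG^{s, 2s-1}$ symbol, and the Gevrey Borel lemma (Proposition \ref{prop:Borel lemma Gevrey}) yields a function $a(x, \widetilde{y}; h) \in \GG_{\rm b}^s(\widetilde{U})$ satisfying \eqref{amplitude a(x,y;h) Gevrey asymp}. The flatness estimate \eqref{amplitude dbar estimate} follows from Proposition \ref{prop:flatness estimtate}, ellipticity is immediate from $a_0 \neq 0$, and the polarization $\Psi$ is obtained by applying the same construction to $\Phi$ itself.

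Next, by Proposition \ref{prop: independent of contour Gevrey} I may replace an arbitrary good contour $\Gamma$ by the affine contour $\Gamma_0$ of \eqref{good contour: affine} at a cost of $\OO(\exp(-C^{-1} h^{-1/(2s-1)}))$, which already fits the target. Parametrizing $\Gamma_0(y, \ybar)$ by $z \mapsto (y+z, \ybar-\zbar)$ recasts the integral as
\[
(A_{\Gamma_0} a)(y, \ybar; h) = \frac{1}{h^n}\int_U e^{\frac{i}{h} f(y, z)}\, a(y+z, \ybar-\zbar; h)\, L(dz),
\]
with $f(y, z) = -2i\phi(y, \ybar; y+z, \ybar-\zbar)$ vanishing to second order at $z=0$ and $\Im f''_{zz}(y, 0)$ positive definite by the strict plurisubharmonicity of $\Phi$. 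The Gevrey stationary phase lemma (Theorem \ref{stationary phase lemma Gevrey lem}), applied in $z$ with $y$ as parameter, gives for any $k \in \NN$,
\[
(A_{\Gamma_0} a)(y, \ybar; h) = \sum_{j < k} h^j L_{j, y}[a(y+\cdot, \ybar-\bar\cdot; h)](0) + \OO(C^{k+1} k!^{2s-1} h^k),
\]
with $L_{j, y}$ the operators of \eqref{eqn:L_k,y operators}.

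Finally, I would substitute the truncated expansion $a = \sum_{\ell < N} a_\ell h^\ell + r_N$ into the right-hand side. Each $L_{j, y}$ is a differential operator of order $2j$ with Gevrey-$s$ coefficients (controlled as in Lemmas \ref{factor 1 lem}--\ref{factor 2 lem}), so its action on the remainder $r_N$ inherits from \eqref{amplitude a(x,y;h) Gevrey asymp} a bound of order $C_1^{j+N} j!^{2s-1} N!^{2s-1} h^N$. Collecting powers of $h$,
\[
(A_{\Gamma_0} a)(y, \ybar; h) = \sum_{m < \min(k, N)} h^m \sum_{j + \ell = m} (L_{j, y} a_\ell)(y, \ybar) + R_{k, N}(y; h).
\]
The coefficients $a_j$ were defined precisely so that $\sum_{j + \ell = m} L_{j, y} a_\ell = \delta_{m, 0}$; this is the content of the recursive formula \eqref{recursive formula HS}, which is equivalent to \eqref{recursive formula eq} after comparing with the definition of $L_{j, y}$. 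Hence the double sum collapses to $1$, and choosing $k = N = [(C_0 h)^{-1/(2s-1)}]$ for $C_0$ sufficiently large, together with the Gevrey monotonicity recalled in \eqref{seq a_k decrease increase}, converts $R_{k, N}$ into $\OO(\exp(-C^{-1} h^{-1/(2s-1)}))$, giving \eqref{eqn:Aa Gevrey}. The main obstacle in this last step is the joint balancing of truncation orders: one must track how the Gevrey-$s$ factorial growth $j!^{2s-1}$ inside $L_{j, y}$ combines with $N!^{2s-1}$ from $r_N$, and verify that at the optimal choice $k = N \sim h^{-1/(2s-1)}$ both sources of error converge to the same exponential rate $\exp(-C^{-1} h^{-1/(2s-1)})$.
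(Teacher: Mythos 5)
Your proposal follows the same route as the paper's proof: construct $a_j$ off-diagonally via the recursion and Gevrey bounds (Proposition \ref{BK upper bounds prop}), extend almost holomorphically via $\kappa$ and Proposition \ref{prop:Gevrey almost holo extension}, realize the formal symbol via the Gevrey Borel lemma, reduce to the affine contour $\Gamma_0$ using Proposition \ref{prop: independent of contour Gevrey}, and then invoke the Gevrey stationary phase Theorem \ref{stationary phase lemma Gevrey lem}. One organizational detail worth flagging: when you substitute the fixed truncation $a = \sum_{\ell<N} a_\ell h^\ell + r_N$ into every $L_{j,y}$, the double sum over $j<k$, $\ell<N$ overshoots the triangle $j+\ell<N$, so beyond the terms that collapse to $1$ you pick up cross terms $\sum_{m\geq N} h^m\sum_{j+\ell=m} L_{j,y}a_\ell$ that you must bound separately. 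The paper avoids this altogether by using a $j$-dependent truncation order $N-j$ inside each $L_{j,y}$, namely writing $L_{j,y}a = \sum_{k<N-j}h^k L_{j,y}a_k + L_{j,y}\bigl(a-\sum_{k<N-j}a_k h^k\bigr)$, so that the double sum is \emph{exactly} the triangle and the recursion \eqref{recursive formula HS} kills it identically to $1$ with no leftover. Your cross terms can indeed be bounded (using \eqref{estimate L_j u} and \eqref{a_j(x,y) as formal Gevrey symbol}, together with Lemma \ref{lem:Gevrey minimize}) to the same exponential rate, but this requires an extra estimate that your sketch folds into $R_{k,N}$ without spelling out; the paper's $j$-dependent truncation is the cleaner device that removes this obstacle.
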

	
	\begin{proof}
		We have already constructed the amplitude $a(x,\widetilde{y};h)\in\GG_{\rm b}^s(\widetilde{U})$ realizing a formal $\GG^{s,2s-1}$ symbol $\{a_j\}_{j=0}^\infty$ such that \eqref{amplitude a(x,y;h) Gevrey asymp} and \eqref{amplitude dbar estimate} hold. Here $\widetilde{U}$ is given in \eqref{complex neighborhood tilde U},
		\[
		\widetilde{U} = \{(y+z,\ybar-\zbar) : y\in U,\ |z|<\rho\},
		\]
		with $U\Subset\Omega$ being a small neighborhood of $x_0\in\Omega$, $\rho>0$ small. Therefore, we only need to prove \eqref{eqn:Aa Gevrey}. Let us set $U$ sufficiently small such that Proposition \ref{prop: independent of contour Gevrey} holds (with $V_0$ there replaced by $U$). It then suffices to prove \eqref{eqn:Aa Gevrey} with a particular choice of the good contour $\Gamma_0(y,\ybar)$, $y\in U$, defined in \eqref{good contour: affine}, for which we recall as follows
		\[
		\Gamma_0(y,\ybar) : B_{\CC^n}(0,\rho) \owns z \mapsto (y+z,\ybar-\zbar) \in \CC^{2n} .
		\]
		Recalling \eqref{recursive HS integral form} we write
		\begin{equation}
			\label{eqn:A Gamma_0 final}
			(A_{\Gamma_0}a)(y,\ybar;h) = \frac{1}{h^n} \int_{B_{\CC^n}(0,\rho)} e^{\frac{i}{h} f(y,z)} a(y+z,\ybar-\zbar;h)\,L(dz) ,
		\end{equation}
		with $f(y,z)$ given in \eqref{eqn:f(y,z)} satisfying 
		\[
		f(y,z) = 2i \Phi_{x\overline{x}}''(y) \zbar\cdot z + g(y,z),\quad g(y,z) = \OO(|z|^3).
		\]
		It follows from the strict plurisubharmonicity of $\Phi$ on $\Omega$ that we can assume $\rho>0$ to be small enough so that there exists $C_{U}>0$ such that
		\begin{equation}
			\label{Im f positive uniformly}
			\Im f(y,z) \geq C_{U}^{-1} |z|^2,\quad y\in U,\ |z|<\rho .
		\end{equation}
		Noting that
		\[
		f(y,z) = 2i (\Psi(y+z,\ybar) + \Psi(y,\ybar-\zbar) - \Psi(y+z,\ybar-\zbar) - \Psi(y,\ybar))
		\]
		and that $\Psi\in\GG_{\rm b}^s(\widetilde{U})$, we have for some $C_{\Psi,\widetilde{U}} > 0$,
		\begin{equation}
			\label{f(y,z) Gevrey estimate}
			|\partial_{\Re z}^\alpha \partial_{\Im z}^\beta f(y,z)| \leq C_{\Psi,\widetilde{U}}^{1+|\alpha|+|\beta|} \alpha!^s \beta!^s,\quad y\in U,\ |z|<\rho .
		\end{equation}
		Similarly, since $a\in\GG_{\rm b}^s(\widetilde{U})$, we have for some $C_{a,\widetilde{U}} > 0$,
		\begin{equation}
			\label{a(y,z) Gevrey estimate}
			|\partial_{\Re z}^\alpha \partial_{\Im z}^\beta (a(y+z,\ybar-\zbar))| \leq C_{a,\widetilde{U}}^{1+|\alpha|+|\beta|} \alpha!^s \beta!^s,\quad y\in U,\ |z|<\rho .
		\end{equation}
		We now apply Theorem \ref{stationary phase lemma Gevrey lem} to the integral in \eqref{eqn:A Gamma_0 final} to obtain an asymptotic expansion as $h\to 0^+$, we have for each $N\in\NN$, 
		\begin{equation}
			\label{eqn:A Gamma_0 asymp expansion}
			\biggl|(A_{\Gamma_0}a)(y,\ybar;h) - \sum_{j=0}^{N-1} h^j (L_{j,y} a)(y,\ybar)\biggr| \leq C^{1+N} N!^{2s-1} h^N,\quad y\in U.
		\end{equation}
		We emphasize that the constant $C>0$ here can be chosen uniform in $y\in U$ thanks to the estimates \eqref{Im f positive uniformly}-\eqref{a(y,z) Gevrey estimate}.  
		Since Theorem \ref{stationary phase lemma Gevrey lem} differs from \cite[Theorem 7.7.5]{hormander1985analysis} only in the form of the remainder estimate, we note that the operators $L_{j,y}$, $j\in\NN$ are identical to the expressions in \eqref{eqn:L_k,y operators}, given by
		\[
		\begin{split}
			(L_{j,y} a)(y,\ybar) = &\frac{\pi^n}{2^n \det(\Phi_{x\overline{x}}''(y))}\sum_{\nu-\mu=j}\sum_{2\nu\geq 3\mu}\frac{i^{\mu}}{2^\nu \mu! \nu!} \\ &\qquad\left(\big(\Phi_{x\overline{x}}''(y)\big)^{-1}\partial_z\cdot\partial_{\zbar}\right)^\nu \big(g(y,z)^\mu a(y+z,\ybar-\zbar;h)\big)\bigg\lvert_{z=0}
		\end{split} .
		\]
		For a complete deduction of the expressions of $L_{j,y}$ using \eqref{eqn:Lj}, we refer to \cite[Section 2]{hitrik2022smooth}, whereas we recall the key calculation as follows
		\[
		\big(\Hess_{\RR_z} (f(y,0))\big)^{-1} \partial_{\Re z, \Im z} \cdot \partial_{\Re z, \Im z} = i^{-1} \big(\Phi_{x\overline{x}}''(y)\big)^{-1}\partial_z\cdot\partial_{\zbar} ,
		\]
		where $\Hess_{\RR_z} (f(y,0))$ is the ($2n\times 2n$) Hessian matrix of $f(y,z)$ with respect to real coordinates $(\Re z, \Im z)\in\RR^{2n} \cong \CC^n$ at $z=0$. Let us proceed to recall the asymptotic expansion $a(x,\widetilde{y};h)\sim\sum a_k(x,\widetilde{y}) h^k$ satisfying \eqref{amplitude a(x,y;h) Gevrey asymp} and write
		\[
		(L_{j,y} a)(y,\ybar) = \sum_{k=0}^{N-1-j} 	h^k (L_{j,y} a_k)(y,\ybar) + \bigg(L_{j,y} \big(a - \sum_{k=0}^{N-1-j} a_k h^k\big)\bigg)(y,\ybar) .
		\] 
		It follows from \eqref{amplitude a(x,y;h) Gevrey asymp} that for some $\widetilde{C}_U >0$ we have
		\begin{equation}
			\label{tail a(x,y) Gevrey estimates}
			\begin{gathered}
				\biggl|\partial_{\Re z,\Im z}^\alpha \bigg(a(y+z,\ybar-\zbar;h) - \sum_{k=0}^{N-1-j} h^k a_k(y+z,\ybar-\zbar)\bigg)\bigg\lvert_{z=0} \biggr| \\
				\leq \widetilde{C}_U^{1+N-j} (N-j)!^{2s-1} h^{N-j} \widetilde{C}_U^{|\alpha|} \alpha!^s, \quad \alpha\in\NN^{2n},\ y\in U.
			\end{gathered}
		\end{equation}
		In view of \eqref{estimate L_j u}, together with \eqref{f(y,z) Gevrey estimate} and \eqref{tail a(x,y) Gevrey estimates}, we conclude that
		\begin{equation}
			\label{estimate L_j tail a(x,y)}
			\begin{split}
				\biggl| \bigg(L_{j,y} \big(a - \sum_{k=0}^{N-1-j} a_k h^k\big)\bigg)(y,\ybar) \biggr| &\leq C^{1+N} j!^{2s-1} (N-j)!^{2s-1} h^{N-j} \\
				&\leq C^{1+N} N!^{2s-1} h^{N-j},\quad y\in U,
			\end{split}
		\end{equation}
		for some $C>0$ (uniform in $y\in U$). Recalling the recursive formula \eqref{recursive formula HS} we get
		\begin{equation}
			\label{apply recursive HS}
			\sum_{j=0}^{N-1} h^j \sum_{k=0}^{N-1-j} 	h^k (L_{j,y} a_k)(y,\ybar) = \sum_{\ell=0}^{N-1} h^\ell \sum_{j+k = \ell} (L_{j,y} a_k)(y,\ybar) = 1,\quad N\geq 1.
		\end{equation}
		Combing \eqref{eqn:A Gamma_0 asymp expansion}, \eqref{estimate L_j tail a(x,y)} and \eqref{apply recursive HS}, we conclude for each $N\geq 1$,
		\begin{equation}
			\label{eqn:Aa-1 Gevrey remainder}
			|(A_{\Gamma_0}a)(y,\ybar;h) - 1| \leq (1+N) C^{1+N} N!^{2s-1} h^N \leq \widetilde{C}^{1+N} N!^{2s-1} h^N, \quad y\in U.
		\end{equation}
		The remainder in \eqref{eqn:Aa Gevrey} then follows by optimizing the upper bound in \eqref{eqn:Aa-1 Gevrey remainder}, setting $N = [ (\widetilde{C}h)^{-\frac{1}{2s-1}} ]$ in view of Lemma \ref{lem:Gevrey minimize}.
	\end{proof}

	\section{Approximate reproducing property and proof of Theorem \ref{main thm}}\label{Sec proof of the main theorem}
	Let $V\Subset\Omega$ be a small open neighborhood of $x_0\in\Omega$ with smooth boundary. Let $\Psi\in\GG_{\rm b}^s (\neigh((x_0,\overline{x_0}),\CC^{2n}))$ be an almost holomorphic extension of the weight function $\Phi$ satisfying \eqref{Psi as polarization Gevrey} and \eqref{Psi almost holomorphic Gevrey}. We assume that $V$ is small enough so that $\Psi$, as well as the amplitude $a$ introduced in Theorem \ref{thm:asymp invert A}, are defined in a neighborhood of the closure of $V\times \rho(V)$, with $\rho(x)=\xbar$ denoting the complex conjugation.
	
	We set, for $u\in L_\Phi^2(V) = L^2(V,e^{-2\Phi(x)/h} L(dx))$,
	\begin{equation}\label{local Bergman kernel eq}
		\widetilde{\Pi}_{V} u(x) = \frac{1}{h^n} \int_{V} e^{\frac{2}{h}\Psi(x,\overline{y})} a(x,\overline{y};h) u(y) e^{-\frac{2}{h}\Phi(y)}\, L(dy).
	\end{equation}
	In view of the estimate \eqref{Psi basic estimate} together with the Schur test we have
	\begin{equation}
		\label{Pi_V is O(1)}
		\widetilde{\Pi}_V=\OO(1): L^2_{\Phi}(V)\rightarrow L^2_{\Phi}(V).
	\end{equation}
	Letting $u\in H_\Phi(V)$, we can rewrite \eqref{local Bergman kernel eq} into the polarized expression
	\begin{equation*}
		\widetilde{\Pi}_{V} u(x) = \frac{C_n}{h^n} \iint_{\Gamma_V} e^{\frac{2}{h}(\Psi(x,\widetilde{y}) - \Psi(y,\widetilde{y}))} a(x,\widetilde{y};h) u(y)\, dy d\widetilde{y},
	\end{equation*}
	with $C_n = (i/2)^n$ as $L(dx)=C_n dx d\xbar$. Here the contour $\Gamma_V$ is given by
	\begin{equation}
		\label{eqn:Gamma_V anti diagonal contour}
		\Gamma_V=\{ (y, \widetilde{y})\in V\times \rho(V):  \widetilde{y}=\oo{y}\}.
	\end{equation}
	
	\subsection{Weak reproducing property}
	
	In this section we aim to prove an analogue of \cite[Theorem 3.1]{hitrik2022smooth}, namely the approximate reproducing property of $\widetilde{\Pi}_V$ in the weak formulation, with an improved error bound of the same form as in \eqref{eqn:Aa Gevrey}.
	\begin{thm}
		\label{thm:weak reproducing}
		There exists a small open neighborhood $W \Subset V$ of $x_0$ with $C^{\infty}$ boundary such that for every $\Phi_1 \in C(\Omega; \mathbb{R})$,
		$\Phi_1 \leq \Phi$, with $\Phi_1 < \Phi$ on $\Omega \setminus \overline{W}$, and there exists $C$ such that for all $u\in H_{\Phi}(V)$, $v\in H_{\Phi_1}(V)$, we have
		\begin{equation}
			\label{eqn:weak reproducing Gevrey}
			{(\widetilde{\Pi}_V u,v)_{L^2_{\Phi}(V)}= (u,v)_{H_{\Phi}(V)}}+ \OO(1)\exp\bigl(-C^{-1}h^{-\frac{1}{2s-1}}\bigr)\,  \|u\|_{H_{\Phi}(V)}\, \|v\|_{H_{\Phi_1}(V)}.
		\end{equation}
	\end{thm}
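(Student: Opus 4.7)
The plan is to adapt \cite[Theorem 3.1]{hitrik2022smooth} to the Gevrey setting, with Theorem \ref{thm:asymp invert A} replacing the $\OO(h^\infty)$ reproducing identity that was available in the smooth case.

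First I polarize both pairings. Using $L(dx)=C_n\,dx\wedge d\overline{x}$ together with the Gevrey polarization $\Psi$ satisfying \eqref{Psi as polarization Gevrey}--\eqref{Psi almost holomorphic Gevrey}, I introduce independent variables $\widetilde{x}, \widetilde{y}$ replacing $\overline{x}, \overline{y}$ and realize $\overline{v(x)}\,e^{-2\Phi(x)/h}$ as the restriction to the antidiagonal of $\widetilde{v}(\widetilde{x})\,e^{-2\Psi(x,\widetilde{x})/h}$, with $\widetilde{v}$ holomorphic; similarly polarize the $y$-integration of $\widetilde{\Pi}_V$. This produces the $4n$-real-dim representation
\begin{equation*}
(\widetilde{\Pi}_V u,v)_{L^2_\Phi(V)} = \frac{C_n^2}{h^n} \iint_{\Gamma_V^{(x,\widetilde{x})}}\!\!\iint_{\Gamma_V^{(y,\widetilde{y})}} e^{\frac{2}{h}[\Psi(x,\widetilde{y})-\Psi(y,\widetilde{y})-\Psi(x,\widetilde{x})]}\, a(x,\widetilde{y};h)\, u(y)\, \widetilde{v}(\widetilde{x})\, dx\,d\widetilde{x}\,dy\,d\widetilde{y},
\end{equation*}
whose integrand is almost holomorphic in all four polarized complex variables, with $\overline{\partial}$-estimates supplied by Proposition \ref{prop:flatness estimtate} applied to $a$ and $\Psi$.

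Next I perform a global contour deformation. Reparameterizing by $(u,w):=(y,x-y)$, the original contour embeds as $(x,\widetilde{x},y,\widetilde{y})=(u+w,\,\overline{u}+\overline{w},\,u,\,\overline{u})$. I linearly homotope the $(\widetilde{x},\widetilde{y})$ components to $(\overline{u},\,\overline{u}-\overline{w})$ while keeping $(x,y)=(u+w,u)$ fixed, first restricting to $w\in B_{\CC^n}(0,\rho)$ for small $\rho>0$ (the complementary range is absorbed by the Gaussian $e^{-c|w|^2/h}$ coming from \eqref{Psi basic estimate}). Stokes's theorem converts the change of contour into a $\overline{\partial}$-integral over the homotopy cylinder plus lateral boundary contributions; combining the Gevrey flatness $\exp(-C^{-1}|\overline{w}|^{-1/(s-1)})$ with the Gaussian $\exp(-c|w|^2/h)$ and optimizing as in \eqref{the minimum} yields a total remainder of order $\OO(\exp(-C^{-1}h^{-1/(2s-1)}))$, exactly as in Proposition \ref{prop: independent of contour Gevrey}.

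On the deformed contour the identity
\begin{equation*}
\Psi(u+w,\overline{u}-\overline{w})-\Psi(u,\overline{u}-\overline{w})-\Psi(u+w,\overline{u}) = \phi(u,\overline{u};u+w,\overline{u}-\overline{w})-\Phi(u),
\end{equation*}
with $\phi$ from \eqref{phase phi defn}, identifies the deformed integral (after the Jacobian already implicit in \eqref{recursive HS integral form}) as $\int_V u(u)\overline{v(u)}\,e^{-2\Phi(u)/h}\,(A_{\Gamma_0}a)(u,\overline{u};h)\,L(du)$. Applying \eqref{eqn:Aa Gevrey} to replace $(A_{\Gamma_0}a)(u,\overline{u};h)$ by $1+\OO(\exp(-C^{-1}h^{-1/(2s-1)}))$ extracts the main term $(u,v)_{H_\Phi(V)}$ and contributes an error of the same order. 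Finally, to bound the aggregated remainder by $\|u\|_{H_\Phi(V)}\|v\|_{H_{\Phi_1}(V)}$, I use a Schur-test argument together with the strict inequality $\Phi_1<\Phi$ on $\Omega\setminus\overline{W}$ (choosing $W$ small enough): the latter supplies the factor $e^{2(\Phi_1-\Phi)/h}$ outside $\overline{W}$ that absorbs the weight discrepancy in the far field, at the possible cost of shrinking the constant $C$, as in \cite[Section 3]{hitrik2022smooth} and \cite[Section 5]{deleporte2022analytic}.

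The principal technical obstacle is the bookkeeping in the contour-deformation step: one must simultaneously track the Gevrey flatness rate $\exp(-C^{-1}|\Im|^{-1/(s-1)})$ from Proposition \ref{prop:flatness estimtate}, integrate against the Gaussian $e^{-c|w|^2/h}$ coming from the strict plurisubharmonicity of $\Phi$, and apply the minimization \eqref{the minimum} to synthesize the optimized rate $\exp(-C^{-1}h^{-1/(2s-1)})$ that constitutes the Gevrey replacement of the smooth $\OO(h^\infty)$ remainder of \cite{hitrik2022smooth}.
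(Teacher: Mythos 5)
Your proposal takes a genuinely different route from the paper. The paper does not attempt a direct global contour deformation: instead it decomposes $v\in H_{\Phi_1}(V)$ as a superposition of coherent states $v_z$ localized at points $z\in W_1\Subset V_1$ (as borrowed from \cite{deleporte2022analytic}), reduces to proving the reproducing property against each $v_z$ separately (Proposition \ref{prop:weak reproducing for v_z}), and then for fixed $z$ carries out the deformation from the product contour $\Gamma_{V_1}\times\Gamma_{V_2}$ to the composed contour \eqref{Gamma_2 composed contour} using the function $G_z$ of Proposition \ref{prop:G_z}, which has a \emph{nondegenerate critical point} at $(z,\overline z,z,\overline z)$ precisely because of the extra Gaussian damping $-\delta|\widetilde{x}-\overline z|^2$ built into $F_z$ via \eqref{eqn:F_z defn}. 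That localization is what makes the Morse-type ``good contour'' argument local and insulates it from boundary effects. Your approach skips the coherent-state decomposition entirely and replaces the abstract Morse deformation by an explicit linear homotopy in $(u,w,\theta)$ between the product contour $\Gamma_V\times\Gamma_V$ and the composed contour (your target contour $(u+w,\overline u,u,\overline u-\overline w)$ is, for the affine choice \eqref{good contour: affine}, exactly the composed contour). This is more elementary in spirit, and the central analytic step — combining the Gevrey flatness of $\overline\partial\Psi$, $\overline\partial a$ with the Gaussian $e^{-c|w|^2/h}$ along the homotopy cylinder and optimizing via \eqref{the minimum} — is both correct and essentially the same calculation that the paper performs in Propositions \ref{prop: independent of contour Gevrey} and \ref{prop:weak reproducing for v_z}.

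There is however a real gap in your treatment of the lateral boundary of the homotopy cylinder. On the face $u\in\partial V$ the integrand involves $u(u)$ and $\widetilde v(\overline u)=\overline{v(u)}$ evaluated at points approaching $\partial V$, where the pointwise bounds $|u(z)|\lesssim h^{-n}e^{\Phi(z)/h}\|u\|$ and $|v(z)|\lesssim h^{-n}e^{\Phi_1(z)/h}\|v\|$ degenerate. To make your scheme run one must first carry out a preliminary reduction to a nested chain of neighborhoods $W\Subset W_1\Subset V_1\Subset V_2\Subset V$ — as the paper does in \eqref{eqn:weak reproducing rewrite inner product} and \eqref{v v_z decomposition reduced} — and then exploit the strict inequality $\Phi_1<\Phi$ on $\Omega\setminus\overline W$ to render these boundary contributions exponentially small via $e^{(\Phi_1-\Phi)/h}$. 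Your final sentence gestures at exactly this mechanism, but attributes it to ``a Schur-test argument,'' which is a different tool (the Schur test is invoked in the paper only for the operator bound \eqref{Pi_V is O(1)}); what is really needed is the nested-neighborhood decomposition. In the paper all of this is absorbed painlessly into the coherent-state step, so without it you must supply a separate argument, and as written the proposal does not. Apart from that, the deformation itself is sound: the total phase along the linear homotopy equals $[\theta^2-2\theta-1]\,\Phi''_{x\overline x}(u)\overline w\cdot w+\mathcal O(|w|^3)$, which is $\le -c|w|^2$ uniformly for $\theta\in[0,1]$ and $|w|<\rho$ small.
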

	We shall follow \cite[Section 3]{hitrik2022smooth} to prove Theorem \ref{thm:weak reproducing}, where the key ingredient is a contour deformation argument. We would like to highlight that the improved error term in \eqref{eqn:Aa Gevrey}, and the more quantitative $\overline{\partial}$--estimates \eqref{Psi almost holomorphic Gevrey} and \eqref{amplitude dbar estimate} for $\Psi$ and $a$, allow us to get the improved error bound in \eqref{eqn:weak reproducing Gevrey}, compared with the smooth case treated in \cite{hitrik2022smooth}.

	Let $W\Subset V_1\Subset V_2\Subset V$ be open neighborhoods of $x_0$ with smooth boundaries. It has been proved in \cite[Section 4]{deleporte2022analytic} that for $u\in H_\Phi(V)$, $v\in H_{\Phi_1}(V)$,
	\begin{equation}\label{eqn:weak reproducing rewrite inner product}
		(\widetilde{\Pi}_V u,v)_{L^2_{\Phi}(V)} = \int_{V_1} \widetilde{\Pi}_{V_2} u(x) \overline{v(x)} e^{-2\Phi(x)/h} L(dx) + \OO(1)e^{-\frac{1}{Ch}} \|u\|_{H_\Phi(V)} \|v\|_{H_{\Phi_1}(V)} .
	\end{equation}
	Following \cite[Section 4]{deleporte2022analytic}, we work with a \emph{coherent states decomposition} of $v\in H_\Phi (V)$. We recall from \cite[Proposition 2.2]{deleporte2022analytic} that there exists $\eta>0$ such that for any $v\in H_{\Phi}(V)$ and $x\in V_1\Subset V$, 
	\begin{equation*}
		v(x) = \int_V v_z(x) dz\,d\oo{z} + \OO(1) \|v\|_{H_{\Phi}(V)} e^{\frac{1}{h}(\Phi(x) - \eta)}, \quad x\in V_1.
	\end{equation*}
	Here
	\begin{equation}
		\label{v_z def eq}
		v_z(x) = \frac{1}{(2\pi h)^n} e^{\frac{i}{h}(x-z)\cdot \theta(x,z)} v(z)\chi(z) \det(\partial_{\oo{z}}\theta(x,z)) \in \mathrm{Hol}(V),
	\end{equation}
	and $\theta(x,z)$ depends holomorphically on $x\in V$ with
	\begin{equation}
		\label{v_z localization}
		-\Im\left((x-z)\cdot \theta(x,z)\right) + \Phi(z) \leq \Phi(x) - \delta |x-z|^2, \quad x,z\in V,
	\end{equation}
	for some $\delta >0$. The cut-off function $\chi\in C^{\infty}_0(V;[0,1])$ satisfies $\chi = 1$ in $V_2$. 
	
	\noindent
	It has been shown in \cite[Section 4]{deleporte2022analytic} that for $v\in H_{\Phi_1}(V)$ we have, with $W\Subset W_1\Subset V_1$,
	\begin{equation}
		\label{v v_z decomposition reduced}
		v(x) = \int_{W_1} v_z(x) dz\,d\oo{z} + \OO(1) e^{-\frac{1}{Ch}} \|v\|_{H_{\Phi_1}(V)} e^{\frac{1}{h}\Phi(x)}, \quad x\in V_1.
	\end{equation}
	As a result of \eqref{eqn:weak reproducing rewrite inner product}, \eqref{v v_z decomposition reduced}, and \eqref{Pi_V is O(1)}, we get for $u\in H_\Phi(V)$, $v\in H_{\Phi_1}(V)$,
	\begin{equation}\label{eqn:weak reproducing rewrite inner product 2}
		(\widetilde{\Pi}_V u, v)_{L^2_{\Phi}(V)} = \int_{W_1} (\widetilde{\Pi}_{V_2} u , v_z)_{L_\Phi^2(V_1)} \,dz d\zbar + \OO(1)e^{-\frac{1}{Ch}} \|u\|_{H_\Phi(V)} \|v\|_{H_{\Phi_1}(V)}.
	\end{equation}
	
	
	The next proposition is from \cite[Proposition 3.2]{hitrik2022smooth} (cf. \cite[Proposition 4.2]{deleporte2022analytic}), which plays a crucial role in the proof of Theorem \ref{thm:weak reproducing}.
	\begin{prop}\label{prop:G_z}
		Let $\delta >0$ be small and let us set for $z\in V$, $(x,\widetilde{x},y,\widetilde{y}) \in V\times \rho(V)\times V \times \rho(V) \subset \mathbb{C}^{4n}$,
		\begin{align}
			\label{G_z defn}
			G_z(x,\widetilde{x},y,\widetilde{y}) =& 2\Real \Psi(x,\widetilde{y}) - 2\Real \Psi(y,\widetilde{y}) + \Phi(y) + F_{z}(\widetilde{x}) - 2\Real \Psi(x,\widetilde{x}) \\
			=& 2\Real \varphi(y,\widetilde{x}; x,\widetilde{y}) - 2\Real \Psi(y,\widetilde{x}) + \Phi(y) + F_{z}(\widetilde{x}),
		\end{align}
		where
		\begin{equation}
			\label{eqn:F_z defn}
			F_{z}(\widetilde{x}) = \Phi(\overline{\widetilde{x}}) - \delta |\widetilde{x} - \overline{z}|^2.
		\end{equation}
		Then the $C^{\infty}$ function $G_{z}$ has a non-degenerate critical point at $(z,\oo{z},z,\oo{z})$ of signature $(4n,4n)$, with the critical value $0$. The following submanifolds of $\mathbb{C}^{4n}$ are good contours for $G_{z}$ in a neighborhood of $(z,\oo{z},z,\oo{z})$, i.e., they are both of real dimension $4n$, pass through the critical point, and are such that the Hessian of $G_z$ along the contours is negative definite:
		\begin{enumerate}
			\item The product contour
			\begin{equation*}
				\label{Gamma_1 product contour}
				\Gamma_{V}\times \Gamma_{V}=\{(x,\widetilde{x},y,\widetilde{y});\,\widetilde{x} = \overline{x},\,\,\widetilde{y} = \overline{y},\,\, x\in V,\,\,y\in V\}.
			\end{equation*}
			\item The composed contour
			\begin{equation}
				\label{Gamma_2 composed contour}
				\{(x,\widetilde{x},y,\widetilde{y});\, (y,\widetilde{x}) \in \Gamma_{V},\, (x,\widetilde{y}) \in \Gamma(y, \widetilde{x})\}.
			\end{equation}
			Here $\Gamma(y,\widetilde{x}) \subset \mathbb{C}^{2n}_{x,\widetilde{y}}$ is a good contour for the $C^{\infty}$ function $(x,\widetilde{y}) \mapsto {\rm Re}\,\varphi(y,\widetilde{x};x,\widetilde{y})$ described in \eqref{good contour C 2n}.
		\end{enumerate}
	\end{prop}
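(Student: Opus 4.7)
The plan is to verify the three assertions --- critical point at $(z,\overline{z},z,\overline{z})$ with value $0$, signature $(4n,4n)$, and the goodness of both candidate contours --- by direct computation. The three key ingredients are the identity $\Psi(x,\overline{x}) = \Phi(x)$, the basic estimate \eqref{Psi basic estimate}, and the good contour property \eqref{good contour C 2n} for $\Gamma(y,\widetilde{x})$.

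First, substituting $(z,\overline{z},z,\overline{z})$ into $G_z$ and repeatedly using $\Psi(z,\overline{z}) = \Phi(z)$ together with $F_z(\overline{z}) = \Phi(z)$ immediately gives $G_z(z,\overline{z},z,\overline{z}) = 2\Phi(z) - 2\Phi(z) + \Phi(z) + \Phi(z) - 2\Phi(z) = 0$. For criticality, I would use the polarized form of $G_z$: since $\varphi(y,\widetilde{x};x,\widetilde{y})$ has a double zero in $(x,\widetilde{y})$ at the diagonal $(x,\widetilde{y})=(y,\widetilde{x})$, its $(x,\widetilde{y})$-differential vanishes there; the remaining summands $-2\Real\Psi(y,\widetilde{x}) + \Phi(y) + F_z(\widetilde{x})$ depend only on $(y,\widetilde{x})$, and direct differentiation at $(y,\widetilde{x}) = (z,\overline{z})$ yields vanishing $y$- and $\widetilde{x}$-derivatives thanks to $\Psi(w,\overline{w}) = \Phi(w)$ together with the fact that $F_z(\widetilde{x})$ is critical at $\widetilde{x} = \overline{z}$.

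The main step is the goodness of (1) and (2) by restriction. On $\Gamma_V\times\Gamma_V$, substituting $\widetilde{x}=\overline{x}$, $\widetilde{y}=\overline{y}$ and collapsing via $\Psi(w,\overline{w}) = \Phi(w)$ gives
\begin{equation*}
G_z\big|_{\Gamma_V\times\Gamma_V} = -\bigl(\Phi(x) + \Phi(y) - 2\Real\Psi(x,\overline{y})\bigr) - \delta|x-z|^2 \asymp -|x-y|^2 - \delta|x-z|^2
\end{equation*}
by \eqref{Psi basic estimate}, and a direct $2\times 2$ block-Hessian computation in the variables $(x,y)$ shows this is negative-definite near $(z,z)$ once $\delta > 0$. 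On the composed contour, $(y,\widetilde{x})=(y,\overline{y})$ reduces the combination $-2\Real\Psi(y,\overline{y}) + \Phi(y) + F_z(\overline{y})$ to $-\delta|y-z|^2$, whence
\begin{equation*}
G_z\big|_{\text{contour (2)}} = 2\Real\varphi(y,\overline{y};x,\widetilde{y}) - \delta|y-z|^2,
\end{equation*}
which is negative-definite near the critical point by \eqref{good contour C 2n} (controlling the $(x,\widetilde{y})$-direction) combined with the quadratic term $-\delta|y-z|^2$ (controlling the $y$-direction). Both contours are $4n$-real-dimensional and pass through $(z,\overline{z},z,\overline{z})$, which establishes claims (1) and (2).

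The signature $(4n,4n)$ and non-degeneracy then follow by dimension counting: for any non-degenerate critical point, the dimension of a subspace on which the Hessian is negative-definite is bounded by the negative index, so the two $4n$-dimensional subspaces above force that index to be at least $4n$; a symmetric ``antiholomorphic'' construction --- swapping the roles of holomorphic and anti-holomorphic coordinates in the good contour and the sign in front of the $\delta$-term --- yields a $4n$-dimensional positive-definite complement, pinning the signature at $(4n,4n)$. The relevant mixed block of the Hessian reduces to $\partial_x\partial_{\widetilde{y}}\Psi(z,\overline{z}) = \Phi''_{x\overline{x}}(z)$, invertible by strict plurisubharmonicity. The main obstacle will be the careful sign-bookkeeping required to exhibit the complementary positive-definite subspace cleanly and to verify full non-degeneracy of the $8n\times 8n$ Hessian block, but all the essential cancellations come directly from the polarized form of $G_z$ and mirror the standard complex stationary phase setup already used in Theorem \ref{stationary phase lemma Gevrey lem}.
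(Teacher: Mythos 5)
The paper does not prove this proposition; it quotes it from \cite[Proposition 3.2]{hitrik2022smooth} (cf.\ \cite[Proposition 4.2]{deleporte2022analytic}), so there is no in-text proof to compare yours against. Judged on its own, your sketch is correct on most points. The critical value $0$ is an immediate substitution using $\Psi(w,\overline w)=\Phi(w)$ and $F_z(\overline z)=\Phi(z)$, and the criticality follows from the polarized form as you indicate. Your two contour restrictions are also right: on the product contour $G_z$ collapses to $-\bigl(\Phi(x)+\Phi(y)-2\Real\Psi(x,\overline y)\bigr)-\delta|x-z|^2$, negative definite near $(z,z)$ by \eqref{Psi basic estimate} since $\delta>0$; on the composed contour the base factors reduce to $-\delta|y-z|^2$ and the fibre is controlled by \eqref{good contour C 2n}. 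This contour-goodness, together with the $4n$-dimensionality, is the part actually used downstream in the Stokes deformation.

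The weak link is the signature and non-degeneracy. Your dimension-counting scheme is sound linear algebra (a $4n$-dimensional negative-definite subspace together with a $4n$-dimensional positive-definite one forces non-degeneracy and signature $(4n,4n)$), but the proposed complement — ``swap holomorphic and anti-holomorphic coordinates, flip the sign of the $\delta$-term'' — does not produce a positive-definite subspace. Test in the model $\Psi(x,\widetilde y)=x\widetilde y/2$, $\Phi(x)=|x|^2/2$, $z=0$: restricting the Hessian to $\widetilde x=x$, $\widetilde y=y$ gives, in the real $(x_1,y_1)$-coordinates, the block $\left(\begin{smallmatrix}-(1/2+\delta)&1/2\\ 1/2&-1/2\end{smallmatrix}\right)$, which is negative definite (determinant $\delta/2>0$, trace $-1-\delta<0$), while the $(x_2,y_2)$-block is positive definite; so that subspace has signature $(2,2)$, not $(4,0)$. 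What does work in this model is the reflection of the product contour through $(\overline z,\overline z)$ in the fibre directions, $\widetilde x = 2\overline z - \overline x$, $\widetilde y = 2\overline z - \overline y$, whose tangent at the critical point is $\{(a,-\overline a,b,-\overline b)\}$; equivalently — and this is the route taken in the cited references and the one you gesture at in your last sentence — one reads the signature from the block structure of the Hessian, using that the base function $(y,\widetilde x)\mapsto -2\Real\Psi(y,\widetilde x)+\Phi(y)+F_z(\widetilde x)$ has a $(2n,2n)$ critical point, that $2\Real\varphi$ contributes another $(2n,2n)$ in the fibre variables, and that the coupling block $\partial_x\partial_{\widetilde y}\Psi(z,\overline z)=\Phi''_{x\overline x}(z)$ is invertible by strict plurisubharmonicity. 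So the restrictions to the two asserted contours are proved; the positive complement still needs to be exhibited correctly before the $(4n,4n)$ claim can count as established.
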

	
	Following \cite{hitrik2022smooth}, we will perform the contour deformation argument for the inner product $(\widetilde{\Pi}_{V_2}u,v_z)$, in view of \eqref{eqn:weak reproducing rewrite inner product 2}.
	\begin{prop}\label{prop:weak reproducing for v_z}
		There exists an open neighborhood $W_1\Subset V_1$ of $x_0$ and a constant $C>0$ such that uniformly for $z\in W_1$, we have,
		\begin{equation}\label{weak reproducing property eq}
			(\widetilde{\Pi}_{V_2} u, v_z)_{L^2_{\Phi}(V_1)} = (u,v_z)_{H_{\Phi}(V_1)} + \OO(1)\exp\bigl(-C^{-1}h^{-\frac{1}{2s-1}}\bigr) \|u\|_{H_{\Phi}(V)} |v(z)| e^{-\Phi(z)/h},
		\end{equation}
		where $v_z$ is given in \eqref{v_z def eq}.
	\end{prop}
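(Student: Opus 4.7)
The strategy mirrors the contour-deformation argument of \cite{deleporte2022analytic,hitrik2022smooth}: I will express the inner product as a $4n$-dimensional integral on the product contour of Proposition \ref{prop:G_z}, deform it through a family of uniformly good contours to the composed contour \eqref{Gamma_2 composed contour}, and then collapse the inner integration by Theorem \ref{thm:asymp invert A}. The Gevrey improvement over the smooth case is supplied by the quantitative flatness estimates \eqref{Psi almost holomorphic Gevrey}, \eqref{amplitude dbar estimate} and the optimization identity \eqref{the minimum}, which together upgrade the $\OO(h^\infty)$ errors of \cite{hitrik2022smooth} to $\OO(\exp(-C^{-1}h^{-1/(2s-1)}))$.

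First I would rewrite
\[
(\widetilde{\Pi}_{V_2}u,v_z)_{L_\Phi^2(V_1)} = \frac{C_n^2}{h^n}\iint_{\Gamma_{V_1}\times\Gamma_{V_2}} e^{\frac{2}{h}(\Psi(x,\widetilde{y})-\Psi(y,\widetilde{y})-\Psi(x,\widetilde{x}))} a(x,\widetilde{y};h)\,u(y)\,\overline{v_z}(\widetilde{x})\,dx\,d\widetilde{x}\,dy\,d\widetilde{y},
\]
after identifying $\widetilde{x}=\xbar$, $\widetilde{y}=\ybar$ and using $-\Phi(x)=-\Psi(x,\xbar)$. Here $\overline{v_z}(\widetilde{x})$ denotes the almost-holomorphic extension to $\widetilde{x}$ of the anti-holomorphic map $\xbar\mapsto\overline{v_z(x)}$, whose modulus is controlled via \eqref{v_z def eq}--\eqref{v_z localization} by a constant times $h^{-n}|v(z)|\exp((F_z(\widetilde{x})-\Phi(z))/h)$, with $F_z$ as in \eqref{eqn:F_z defn}. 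The modulus of the full integrand is therefore $h^{-n}|v(z)|e^{-\Phi(z)/h}$ times the Gaussian $e^{-2G_z/h}$ with $G_z$ as in \eqref{G_z defn}, and Proposition \ref{prop:G_z} guarantees that both the product contour $\Gamma_{V_1}\times\Gamma_{V_2}$ and the composed contour \eqref{Gamma_2 composed contour} are good.

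Next, I would deform $\Gamma_{V_1}\times\Gamma_{V_2}$ to the composed contour through a family of uniformly good contours for $G_z$, constructed in Morse coordinates exactly as in the proof of Proposition \ref{prop: independent of contour Gevrey}. By Stokes' theorem the difference of the two integrals equals a boundary contribution (exponentially suppressed by the strict Gaussian lower bound on $G_z$ away from the critical point $(z,\zbar,z,\zbar)$) plus a volume integral of $\overline{\partial}(e^{2\phi/h}a(x,\widetilde{y};h))$ paired against the almost-holomorphic factor $\overline{v_z}(\widetilde{x})$. Using \eqref{Psi almost holomorphic Gevrey} and \eqref{amplitude dbar estimate} I obtain
\[
\bigl|\overline{\partial}\bigl(e^{\frac{2}{h}\phi(y,\widetilde{x};x,\widetilde{y})}a(x,\widetilde{y};h)\bigr)\bigr|\leq \OO(1)\exp\bigl(-C^{-1}\bigl(t^2/h+t^{-1/(s-1)}\bigr)\bigr),
\]
where $t$ is the distance from $(x,\widetilde{y})$ to the anti-diagonal $\{\widetilde{y}=\xbar\}$, and optimization in $t$ via \eqref{the minimum} yields the bound $\OO(\exp(-C^{-1}h^{-1/(2s-1)}))$. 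Combining this with the Gaussian control of $e^{-2G_z/h}$ and a Cauchy--Schwarz estimate on $u$ produces an error of the required form in \eqref{weak reproducing property eq}.

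On the composed contour, the inner integration over $(x,\widetilde{y})\in\Gamma(y,\widetilde{x})$ is, by \eqref{A_Gamma FIO}, exactly $(A_\Gamma a)(y,\widetilde{x};h)$ (with $(y,\widetilde{x})\in\Gamma_{V_2}$ playing the role of the parameter), which by Theorem \ref{thm:asymp invert A} equals $1+\OO(\exp(-C^{-1}h^{-1/(2s-1)}))$; the extension from $\widetilde{x}=\ybar$ to nearby $\widetilde{x}$ is justified by the almost-holomorphicity of $a$ and $\Psi$. The remaining integral over $(y,\widetilde{x})\in\Gamma_{V_2}$ (after localizing to $y\in V_1$ with an $\OO(e^{-1/(Ch)})$ error, as in \eqref{eqn:weak reproducing rewrite inner product}) reproduces precisely $(u,v_z)_{H_\Phi(V_1)}$. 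The main obstacle will be the coordinated management of two exponential scales: the Gaussian $e^{-2G_z/h}$, quadratic in the contour parameter, and the Gevrey flatness $\exp(-C^{-1}t^{-1/(s-1)})$, which degenerates at the anti-diagonal. Balancing these via \eqref{the minimum} must be carried out uniformly in $z\in W_1$, and simultaneously the cut-off function $\chi$ from \eqref{v_z def eq} and the mismatch between $V_1$ and $V_2$ must be shown to contribute only the harmless $\OO(e^{-1/(Ch)})$ error.
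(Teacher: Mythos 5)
Your proposal follows the paper's strategy exactly: polarize the inner product onto the product contour $\Gamma_{V_1}\times\Gamma_{V_2}$, deform via Stokes to the composed contour of Proposition \ref{prop:G_z}, bound the resulting $\overline{\partial}$ contribution using \eqref{Psi almost holomorphic Gevrey} and \eqref{amplitude dbar estimate}, optimize the two exponential scales via \eqref{the minimum}, and collapse the inner integral by Theorem \ref{thm:asymp invert A}. Two small corrections to the sketch: the factor $v_z^*(\widetilde{x})=\overline{v_z(\overline{\widetilde{x}})}$ is genuinely holomorphic in $\widetilde{x}$ (not merely an almost-holomorphic extension), and it is precisely this holomorphy of $v_z^*$ in $\widetilde{x}$ and of $u$ in $y$ that confines the nonzero $\overline{\partial}$ terms to $\Psi$ and $a$; also, the full $\overline{\partial}f$ produces several different anti-diagonal distances ($|x-\overline{\widetilde{y}}|$, $|x-\overline{\widetilde{x}}|$, $|y-\overline{\widetilde{x}}|$, $|y-\overline{\widetilde{y}}|$), so rather than using ``distance from $(x,\widetilde{y})$ to $\{\widetilde{y}=\xbar\}$'' alone, one should bound each of these by the distance to the critical point $(z,\zbar,z,\zbar)$ via the triangle inequality, which is the common variable $t$ against which both the Gaussian $G_z$ and the Gevrey-flat factors are balanced.
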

	
	\begin{proof}
		Rewriting the Lebesgue measure on $\CC^n$ as $L(dx) = C_n dx d\xbar$, where $C_n=(i/2)^n$ (see the discussion after \eqref{eqn:f(y,z)} for the orientation), we express the scalar product in the space $H_\Phi(V_1)$ in the polarized form
		\begin{equation}\label{eqn:polarized inner product}
			(f , g)_{H_\Phi(V_1)} = C_n \int_{\Gamma_{V_1}} f(x) g^* (\widetilde{x}) e^{-\frac{2}{h}\Psi(x,\widetilde{x})} \,dx d\xbar.
		\end{equation}
		Here the contour $\Gamma_{V_1}$ is defined as in \eqref{eqn:Gamma_V anti diagonal contour}, and $g^*(\widetilde{x}) := \overline{g(\overline{\widetilde{x}})} \in H_{\widehat{\Phi}}(V_1)$, $\widehat{\Phi}(\widetilde{x}) = \Phi(\overline{\widetilde{x}})$ provided $g\in H_\Phi(V_1)$. We can therefore write, in view of \eqref{local Bergman kernel eq},
		\begin{equation}
			\label{polarized form weak reproducing}
			\begin{split}
				(\widetilde{\Pi}_{V_2} u, v_z)_{L^2_{\Phi}(V_1)} = &\frac{C_n^2}{h^n} \int_{\Gamma_{V_1}}\biggl( \int_{\Gamma_{V_2}} e^{\frac{2}{h}(\Psi(x,\widetilde{y})-\Psi(y,\widetilde{y}))} a(x,\widetilde{y};h) u(y) dy d\widetilde{y} \biggr)  v_z^* (\widetilde{x}) e^{-\frac{2}{h}\Psi(x,\widetilde{x})} dx d\widetilde{x} \\
				= &\iint_{\Gamma_{V_1}\times\Gamma_{V_2}} \omega,
			\end{split}
		\end{equation}
		where $\omega$ is a $(4n,0)$--differential form on $\CC^{4n}$ as follows
		\[
		\omega = f(x,\widetilde{x},y,\widetilde{y})\, dx \wedge d\widetilde{x} \wedge dy \wedge d\widetilde{y},
		\]
		with $f\in \GG^s(V\times \rho(V)\times V\times \rho(V))$ given by 
		\begin{equation}
			\label{eqn:f(x,xtilde,y,ytilde)}
			f(x,\widetilde{x},y,\widetilde{y}) = \frac{C_n^2}{h^n} e^{\frac{2}{h}(\Psi(x,\widetilde{y})-\Psi(y,\widetilde{y})-\Psi(x,\widetilde{x}))} a(x,\widetilde{y};h) u(y) v_z^* (\widetilde{x}) .
		\end{equation}
		In view of \eqref{v_z def eq} and \eqref{v_z localization} we have
		\[
		\left|v_z^* (\widetilde{x})\right| \leq \frac{\OO(1)}{h^n} |v(z)| e^{-\Phi(z)/h} e^{F_z(\widetilde{x})/h} ,\quad \widetilde{x}\in \rho(V_1),
		\] 
		with $F_z$ defined in \eqref{eqn:F_z defn} being strictly plurisubharmonic on $\rho(V_1)$. It then follows from \eqref{G_z defn}, using \cite[Proposition 2.3]{deleporte2022analytic}, that
		\begin{equation}
			\label{estimate:f(x,xtilde,y,ytilde)}
			\left|e^{\frac{2}{h}(\Psi(x,\widetilde{y})-\Psi(y,\widetilde{y})-\Psi(x,\widetilde{x}))} u(y) v_z^* (\widetilde{x})\right| \leq \frac{\OO(1)}{h^{2n}} \|u\|_{H_\Phi(V)} |v(z)|e^{-\Phi(z)/h} e^{G_z(x,\widetilde{x},y,\widetilde{y})/h}.
		\end{equation} 
		Proposition \ref{prop:G_z} shows that the product contour $\Gamma_1 := \Gamma_{V_1}\times\Gamma_{V_2}$ and the composed contour $\Gamma_2$ given in \eqref{Gamma_2 composed contour} are both good contours for $G_z$, for all $z$ in a small neighborhood of $x_0$. Arguing as in the proof of Proposition \ref{prop: independent of contour Gevrey} we see that there exists a $\CI$ homotopy between contours $\Gamma_1$ and $\Gamma_2$, and an associated union of the ``intermediate" good contours $\Sigma\subset\CC^{4n}$. We notice first that, for some $C>0$ we have
		\begin{equation}
			\label{sides contour Sigma}
			\partial\Sigma - (\Gamma_1 - \Gamma_2) \subset \{ (x,\widetilde{x},y,\widetilde{y}) : G_z(x,\widetilde{x},y,\widetilde{y}) \leq -1/C \},\quad z\in \neigh(x_0,\CC^n).
		\end{equation}
		Applying Stokes' formula (see the paragraph after \eqref{Stoke thm eq}), we therefore obtain for all $z$ in a small neighborhood of $x_0$, using \eqref{estimate:f(x,xtilde,y,ytilde)} and \eqref{sides contour Sigma}, that
		\begin{equation}
			\label{eqn:Stokes}
			\begin{split}
				\iint_{\Gamma_1} \omega - \iint_{\Gamma_2} \omega = &\iiint_\Sigma \overline{\partial}f \wedge dx \wedge d\widetilde{x} \wedge dy \wedge d\widetilde{y} \\
				&+ \OO(e^{-1/Ch}) \|u\|_{H_\Phi(V)} |v(z)|e^{-\Phi(z)/h} .
			\end{split}
		\end{equation}
		We shall proceed to estimate $\overline{\partial} f$, which is distinct from \cite[Proposition 3.3]{hitrik2022smooth} in view of \eqref{amplitude dbar estimate} and \eqref{Psi almost holomorphic Gevrey}. Using \eqref{eqn:f(x,xtilde,y,ytilde)} we compute
		\[
		\partial_{\xbar} f = \frac{C_n^2}{h^n} e^{\frac{2}{h}(\Psi(x,\widetilde{y})-\Psi(y,\widetilde{y})-\Psi(x,\widetilde{x}))}  u(y) v_z^* (\widetilde{x}) \left(\frac{2}{h} (\partial_{\xbar} \Psi(x,\widetilde{y}) - \partial_{\xbar} \Psi(x,\widetilde{x})) a + \partial_{\xbar} a \right)
		\]
		We then conclude from \eqref{estimate:f(x,xtilde,y,ytilde)}, Proposition \ref{prop:G_z}, and $\overline{\partial}$--estimates \eqref{amplitude dbar estimate}, \eqref{Psi almost holomorphic Gevrey} that
		\[
		\begin{split}
			|\partial_{\xbar} f| \leq &\frac{\OO(1)}{h^{3n+1}} \|u\|_{H_\Phi(V)} |v(z)|e^{-\Phi(z)/h} e^{-\dist((x,\widetilde{x},y,\widetilde{y}),(z,\zbar,z,\zbar))^2/Ch} \\ &\cdot\big(e^{-|\xbar-\widetilde{y}|^{-\frac{1}{s-1}}/C} + e^{-|\xbar-\widetilde{x}|^{-\frac{1}{s-1}}/C}\big)
		\end{split}
		\]
		Noting that we can estimate
		\[
		\begin{gathered}
			|\xbar - \widetilde{y}|\leq |\xbar - \zbar| + |\widetilde{y}-\zbar| = |x-z| + |\widetilde{y}-\zbar| \leq \sqrt{2}\dist((x,\widetilde{x},y,\widetilde{y}),(z,\zbar,z,\zbar)), \\
			|\xbar - \widetilde{x}|\leq |\xbar - \zbar| + |\widetilde{x}-\zbar| = |x-z| + |\widetilde{x}-\zbar| \leq \sqrt{2}\dist((x,\widetilde{x},y,\widetilde{y}),(z,\zbar,z,\zbar)).
		\end{gathered}
		\]
		It then follows by adjusting constant $C>0$ that
		\begin{equation}
			\label{dbar estimate f(x,xtilde,y,ytilde)}
			\begin{gathered}
				|\partial_{\xbar} f| \leq \frac{\OO(1)}{h^{3n+1}} \|u\|_{H_\Phi(V)} |v(z)|e^{-\Phi(z)/h} \exp\left(-\frac{1}{Ch}\big(t^2 + h t^{-\frac{1}{s-1}}\big)\right), \\
				\textrm{with}\quad t = t(x,\widetilde{x},y,\widetilde{y};z) = \dist((x,\widetilde{x},y,\widetilde{y}),(z,\zbar,z,\zbar)) > 0,
			\end{gathered}
		\end{equation} 
		and we note that $\partial_{\xbar} f = 0$ at $(z,\zbar,z,\zbar)$. We can therefore conclude from \eqref{dbar estimate f(x,xtilde,y,ytilde)} and \eqref{the minimum}, with a larger $C>0$, that
		\begin{equation}
			\label{dbar estimate f(x,xtilde,y,ytilde) final}
			|\partial_{\xbar} f| \leq \OO(1)\exp\left(-C^{-1} h^{-\frac{1}{2s-1}}\right) \|u\|_{H_\Phi(V)} |v(z)|e^{-\Phi(z)/h} .
		\end{equation}
		The bound \eqref{dbar estimate f(x,xtilde,y,ytilde) final} also holds for $\partial_{\overline{\widetilde{x}}} f$, $\partial_{\ybar} f$ and $\partial_{\overline{\widetilde{y}}} f$, by similar computations and estimates, and noticing that $u$ and $v_z^*$ are both holomorphic. We conclude from \eqref{eqn:Stokes} that there exists an open neighborhood $W_1\Subset V_1$ of $x_0$ and a constant $C>0$ such that uniformly for $z\in W_1$, we have
		\begin{equation}
			\label{Gamma_1 - Gamma_2 desired remainder}
			\iint_{\Gamma_1} \omega = \iint_{\Gamma_2} \omega + \OO(1)\exp\left(-C^{-1} h^{-\frac{1}{2s-1}}\right) \|u\|_{H_\Phi(V)} |v(z)|e^{-\Phi(z)/h}.
		\end{equation}
		Let us write in view of \eqref{Gamma_2 composed contour},
		\begin{equation}
			\label{integration along Gamma_2}
			\begin{split}
				\iint_{\Gamma_2} \omega = &\,C_n \int_{\Gamma_{V_1}} \biggl(\frac{1}{(2ih)^n}\int_{\Gamma(y,\widetilde{x})\cap (V_1\times\rho(V_1))} e^{\frac{2}{h}\phi(y,\widetilde{x};x,\widetilde{y})} a(x,\widetilde{y};h) \,dx d\widetilde{y} \biggr) u(y) v_z^*(\widetilde{x}) e^{-\frac{2}{h}\Psi(y,\widetilde{x})} dy d\widetilde{x} \\
				& + \OO(1)\exp\left(-C^{-1} h^{-\frac{1}{2s-1}}\right) \|u\|_{H_\Phi(V)} |v(z)|e^{-\Phi(z)/h} ,
			\end{split}
		\end{equation}
		where we recalled $C_n = (i/2)^n$ and noticed that $dx d\widetilde{x} dy d\widetilde{y} = (-1)^n dx d\widetilde{y} dy d\widetilde{x}$. We observe in \eqref{integration along Gamma_2} that in the outer contour of integration $(y,\widetilde{x})\in \Gamma_{V_1} \iff \widetilde{x}=\ybar$, applying Theorem \ref{thm:asymp invert A} we obtain therefore that the inner integral there is equal to $1+\OO(1)\exp\left(-C^{-1} h^{-\frac{1}{2s-1}}\right)$, provided that $V_1$ is small enough. It follows that \eqref{integration along Gamma_2} reduces to, in view of \eqref{eqn:polarized inner product}, 
		\[
		\iint_{\Gamma_2} \omega = (u,v_z)_{H_\Phi(V_1)} + \OO(1)\exp\left(-C^{-1} h^{-\frac{1}{2s-1}}\right) \|u\|_{H_\Phi(V)} |v(z)|e^{-\Phi(z)/h}.
		\]
		Combining this with \eqref{Gamma_1 - Gamma_2 desired remainder} and \eqref{polarized form weak reproducing}, we get \eqref{weak reproducing property eq}.
	\end{proof}
	
	Let us now finish the proof of Theorem \ref{thm:weak reproducing} following the argument in \cite[Section 4]{deleporte2022analytic}. Setting $W\Subset W_1\Subset V$ with $W_1$ as in Proposition \ref{prop:weak reproducing for v_z}, we combine \eqref{eqn:weak reproducing rewrite inner product 2} and \eqref{weak reproducing property eq} to obtain 
	\[
	(\widetilde{\Pi}_V u, v)_{L^2_{\Phi}(V)} = \int_{W_1} (u , v_z)_{L_\Phi^2(V_1)} \,dz d\zbar + \OO(1)\exp\left(-C^{-1} h^{-\frac{1}{2s-1}}\right) \|u\|_{H_\Phi(V)} \|v\|_{H_{\Phi_1}(V)}.
	\]
	In view of \eqref{v v_z decomposition reduced} we can also write
	\[
	(u, v)_{H_\Phi(V)} = \int_{W_1} (u , v_z)_{L_\Phi^2(V_1)} \,dz d\zbar + \OO(1) e^{-\frac{1}{Ch}} \|u\|_{H_\Phi(V)} \|v\|_{H_{\Phi_1}(V)}.
	\]
	We therefore complete the proof of Theorem \ref{thm:weak reproducing} by comparing these two equations.
	
	\subsection{Completing the proof of Theorem \ref{main thm}}
	Let $\chi_1\in C^{\infty}(\Omega; [0,\infty))$ be such that $\chi_1>0$ on $\Omega\setminus \oo{W}$, and set
	\begin{equation*}
		\Phi_1(x)=\Phi(x)-\delta\chi_1(x).
	\end{equation*}
	Here $\delta>0$ is sufficiently small, so that $\Phi_1$ is strictly plurisubharmonic on $V$. Our first goal is to pass from the reproducing property in weak formulation \eqref{eqn:weak reproducing Gevrey} to a weighted $L^2$ norm estimates:  
	\begin{equation*}
		\|(\widetilde{\Pi}_V-1)u \|_{L^2_{\Phi}(V)}=\OO(1) \exp\bigl(C^{-1}h^{-\frac{1}{2s-1}}\bigr) \|u\|_{H_{\Phi_1}(V)}, \quad\mbox{ for any } u\in H_{\Phi_1}(V).
	\end{equation*}
	Since the projection $\widetilde{\Pi}_V$ may not preserve the weighted space $L^2_{\Phi_1}(V)$, we shall introduce another weight $\Phi_2$, such that
	\begin{equation}\label{boundedness of Pi eq}
		\widetilde{\Pi}_V=\OO(1): L^2_{\Phi_1}(V)\rightarrow L^2_{\Phi_2}(V).
	\end{equation}
	This weight function $\Phi_2$ is defined as $\Phi_2=\Phi-\chi_2$, and $\chi_2\in C^{\infty}(\Omega; [0,\infty))$ is given by
	\begin{equation}\label{infimal convolution eq}
		\chi_2(x)=\inf_{y\in V} \Bigl(\frac{|x-y|^2}{2C}+\delta\chi_1(y) \Bigr)
	\end{equation}
	where $C>0$ is some sufficiently large constant. It follows readily that
	\begin{equation*}
		\Phi_2 \leq \Phi  \mbox{ in } \Omega, \quad \Phi_2 < \Phi \mbox{ in } \Omega\setminus \oo{W}
	\end{equation*}
	and
	\begin{equation}\label{two weights comparison eq}
		\Phi_1 \leq \Phi_2 \mbox{ in } V.
	\end{equation}
	In addition, the boundedness in \eqref{boundedness of Pi eq} can be deduced from \eqref{infimal convolution eq} and the Schur test. As proved in \cite[Section 4]{hitrik2022smooth}, the weight function $\Phi_2$ is still plurisubharmonic on $V$ when $\delta>0$ is sufficiently small. Without loss of generality, we shall assume $V$ is pseudoconvex in what follows.

	Let
	\begin{equation*}
		\Pi_{\Phi_2}: L^2_{\Phi_2}(V) \rightarrow H_{\Phi_2}(V)
	\end{equation*}
	be the orthogonal projection. We introduce the following proposition for later use.
	\begin{prop}\label{Pi_V almost holomorphic Gevrey prop}
		We have
		\begin{equation*}
			\Pi_{\Phi_2} \widetilde{\Pi}_V - \widetilde{\Pi}_V = \OO(1)\exp\bigl(C^{-1}h^{-\frac{1}{2s-1}}\bigr): L^2_{\Phi_1}(V) \rightarrow L^2_{\Phi_2}(V).
		\end{equation*}	
	\end{prop}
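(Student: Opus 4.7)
The plan is to show that $\widetilde{\Pi}_Vu$ is almost holomorphic in a quantitative Gevrey sense, namely
\begin{equation*}
\overline{\partial}\widetilde{\Pi}_V = \OO(1)\exp\bigl(-C^{-1}h^{-\frac{1}{2s-1}}\bigr) : L^2_{\Phi_1}(V) \to L^2_{\Phi_2}(V),
\end{equation*}
and then apply H\"ormander's $L^2$--estimate for the $\overline{\partial}$--equation on the pseudoconvex domain $V$ with strictly plurisubharmonic weight $\Phi_2$. Since $(1-\Pi_{\Phi_2})\widetilde{\Pi}_Vu$ is the minimal $L^2_{\Phi_2}$--norm solution of $\overline{\partial}v = \overline{\partial}(\widetilde{\Pi}_Vu)$, such an estimate produces
\begin{equation*}
\|(1-\Pi_{\Phi_2})\widetilde{\Pi}_Vu\|_{L^2_{\Phi_2}(V)} \leq Ch^{1/2}\|\overline{\partial}(\widetilde{\Pi}_Vu)\|_{L^2_{\Phi_2}(V)} \leq \OO(1)\exp\bigl(-C^{-1}h^{-\frac{1}{2s-1}}\bigr) \|u\|_{L^2_{\Phi_1}(V)},
\end{equation*}
which is exactly the claimed operator bound since $\Pi_{\Phi_2}\widetilde{\Pi}_V - \widetilde{\Pi}_V = -(1-\Pi_{\Phi_2})\widetilde{\Pi}_V$.

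First I would differentiate the integral representation \eqref{local Bergman kernel eq} in $\xbar$, picking up a factor $\tfrac{2}{h}\partial_{\xbar}\Psi(x,\ybar)\,a(x,\ybar;h) + \partial_{\xbar}a(x,\ybar;h)$ inside the integrand. By the quantitative flatness estimates \eqref{Psi almost holomorphic Gevrey} and \eqref{amplitude dbar estimate} (applied along the anti-diagonal, where $|x - \overline{\ybar}| = |x-\overline{\ybar}| = |x-y|$), both contributions are bounded pointwise by $\OO(h^{-1})\exp(-C^{-1}|x-y|^{-1/(s-1)})$. Combining with the Gaussian factor from \eqref{Psi basic estimate}, which gives $|\exp(\tfrac{2}{h}\Psi(x,\ybar) - \tfrac{2}{h}\Phi(y))| \leq \exp((\Phi(x)-\Phi(y))/h - c_0|x-y|^2/h)$, the kernel $K_{\overline{\partial}\widetilde{\Pi}_V}(x,y)$ of $\overline{\partial}\widetilde{\Pi}_V$ satisfies
\begin{equation*}
|K_{\overline{\partial}\widetilde{\Pi}_V}(x,y)| e^{-\Phi_2(x)/h} e^{\Phi_1(y)/h} \leq \frac{C}{h^{n+1}} e^{(\Phi(x)-\Phi_2(x))/h - (\Phi(y)-\Phi_1(y))/h} e^{-c_0|x-y|^2/h} \exp\bigl(-C^{-1}|x-y|^{-\frac{1}{s-1}}\bigr).
\end{equation*}
The exponent $(\Phi(x) - \Phi_2(x))/h$ equals $\chi_2(x)/h$, and the definition \eqref{infimal convolution eq} of $\chi_2$ as an infimal convolution yields $\chi_2(x) \leq |x-y|^2/(2C) + \delta\chi_1(y)$ for every $y \in V$, so this weight factor can be dominated by $\exp(|x-y|^2/(2Ch))$ up to an $e^{\delta\chi_1(y)/h}$ factor that cancels the $-(\Phi(y)-\Phi_1(y))/h = -\delta\chi_1(y)/h$ term.

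After choosing $C$ large enough that the Gaussian part of the exponent dominates, I am left with an exponent of the form $-c_0'|x-y|^2/h - C^{-1}|x-y|^{-1/(s-1)}$. Splitting off half of the Gaussian and optimizing the remaining $t^2/h + t^{-1/(s-1)}$ over $t > 0$ as in \eqref{the minimum} gives an upper bound
\begin{equation*}
|K_{\overline{\partial}\widetilde{\Pi}_V}(x,y)| e^{-\Phi_2(x)/h + \Phi_1(y)/h} \leq \frac{C}{h^{n+1}} \exp\bigl(-C^{-1}h^{-\frac{1}{2s-1}}\bigr)\, e^{-c_0'|x-y|^2/(2h)}.
\end{equation*}
Applying the Schur test (the same way \eqref{boundedness of Pi eq} is obtained), and absorbing the harmless $h^{-1}$ prefactor into the exponential using $1/(2s-1) < 1$, yields the desired $L^2_{\Phi_1}(V) \to L^2_{\Phi_2}(V)$ bound on $\overline{\partial}\widetilde{\Pi}_V$. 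Finally, since $V$ is pseudoconvex and $\Phi_2$ is strictly plurisubharmonic there, H\"ormander's $L^2$--estimate (see \cite[Proposition~4.2.5]{hormander1994convexity}) closes the argument exactly as for \eqref{k_x-PiPhik_x L2 norm}.

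The main obstacle is bookkeeping the three weights: one must exploit the weight gap $\Phi(y) - \Phi_1(y) = \delta\chi_1(y)$ to absorb the loss $\chi_2(x) = \Phi(x) - \Phi_2(x)$ coming from $\Pi_{\Phi_2}$ not preserving $L^2_{\Phi_1}$, and do so in a way that preserves enough of the Gaussian $e^{-|x-y|^2/(Ch)}$ to make the Gevrey optimization $t^2/h + t^{-1/(s-1)} \gtrsim h^{-1/(2s-1)}$ available. The infimal convolution structure \eqref{infimal convolution eq} of $\chi_2$ is tailor-made for exactly this cancellation, and its choice of constant $C$ in \eqref{infimal convolution eq} must be taken larger than $1/(2c_0)$ for the scheme to close.
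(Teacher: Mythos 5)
Your proposal is correct and follows essentially the same route as the paper: reduce to a quantitative $\overline{\partial}$--estimate on $\widetilde{\Pi}_V$ via H\"ormander's $L^2$--estimate, bound the kernel of $\overline{\partial}\widetilde{\Pi}_V$ pointwise by combining the Gevrey flatness of $\Psi$ and $a$ with the Gaussian from \eqref{Psi basic estimate} and the infimal-convolution structure of $\chi_2$, optimize $t^2/h + t^{-1/(s-1)}$ as in \eqref{the minimum}, and close with the Schur test. The observations that the weight gap $\delta\chi_1$ must absorb $\chi_2$, and that the constant in \eqref{infimal convolution eq} must be chosen large relative to $c_0$, match the paper's proof.
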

	
	\begin{proof}
		Given $f\in L^2_{\Phi_1}(V)$, by applying H\"ormander's $L^2$ estimates for the pseudoconvex open set $V$ and the strictly plurisubharmonic weight $\Phi_2$ (\cite[Proposition 4.2.5]{hormander1994convexity}), we obtain 
		\begin{equation*}
			\|(\Pi_{\Phi_2}-1)\widetilde{\Pi}_Vf\|_{L^2_{\Phi_2}(V)}= \OO(h^{1/2}) \|\oo{\partial}\,\widetilde{\Pi}_Vf\|_{L^2_{\Phi_2}(V)}.
		\end{equation*} 
		Thus, it is sufficient to prove 
		\begin{equation}\label{Pi projection is almost holomorphic eq}
			\oo{\partial}\circ \widetilde{\Pi}_V = \OO(1)\exp\bigl(C^{-1}h^{-\frac{1}{2s-1}}\bigr): L^2_{\Phi_1}(V) \rightarrow L^2_{\Phi_2, (0,1)}(V),
		\end{equation}
		where $L^2_{\Phi_2, (0,1)}(V)$ is the space of $(0, 1)$ forms.
		
		Recalling the definition of $\widetilde{\Pi}_V$ in \eqref{local Bergman kernel eq}, we have 
		\begin{align*}
			\oo{\partial}\,\widetilde{\Pi}_Vf(x)=\frac{1}{h^n} \int_{V} \oo{\partial}_x \left(e^{\frac{2}{h}\left(\Psi(x,\overline{y})-\Phi(y)\right)} a(x,\overline{y};h)\right) f(y)\, L(dy).
		\end{align*}
		In view of the Schur test, to establish \eqref{Pi projection is almost holomorphic eq} it suffices to show
		\begin{align}\label{Schur test estimate eq}
			\sup_{x, y\in V}\left|\partial_{\oo{x}}\left(e^{\frac{2}{h}\left(\Psi(x,\overline{y})-\Phi(y)\right)} a(x,\overline{y};h)\right) e^{-\frac{1}{h}\Phi_2(x)+\frac{1}{h}\Phi_1(y)}\right|=\OO(1)\exp\bigl(C^{-1}h^{-\frac{1}{2s-1}}\bigr).
		\end{align}
		A straightforward computation gives
		\begin{align*}
			\Bigl|\partial_{\oo{x}}\Bigl(&e^{\frac{2}{h}(\Psi(x,\overline{y})-\Phi(y))} a(x,\overline{y};h)\Bigr) e^{-\frac{1}{h}\Phi_2(x)+\frac{1}{h}\Phi_1(y)}\Bigr|\\
			&=e^{\frac{1}{h}\left(2\Re\Psi(x,\oo{y})-\Phi(x)-\Phi(y)+\chi_2(x)-\delta\chi_1(y)\right)}\Bigl|\frac{2}{h}\partial_{\oo{x}}\Psi(x, \oo{y})a(x, \oo{y}; h)+\partial_{\oo{x}}a(x, \oo{y}; h) \Bigr| 
		\end{align*}
		Note by \eqref{good contour C 2n} and \eqref{infimal convolution eq} we get
		\begin{equation*}
			2\Re\Psi(x,\oo{y})-\Phi(x)-\Phi(y)+\chi_2(x)-\delta\chi_1(y)\leq -\frac{|x-y|^2}{2C}.
		\end{equation*}
		Meanwhile, by \eqref{amplitude dbar estimate} and \eqref{Psi almost holomorphic Gevrey} we have
		\begin{equation*}
			\Bigl|\frac{2}{h}\partial_{\oo{x}}\Psi(x, \oo{y})a(x, \oo{y}; h)+\partial_{\oo{x}}a(x, \oo{y}; h) \Bigr|=\frac{\OO(1)}{h} \exp\big(-C^{-1}|x-y|^{-\frac{1}{s-1}}\big).
		\end{equation*}
		Therefore, by setting $t=|x-y|$ we obtain
		\begin{align*}
			\Bigl|\partial_{\oo{x}}\Bigl(e^{\frac{2}{h}(\Psi(x,\overline{y})-\Phi(y))} a(x,\overline{y};h)\Bigr) e^{-\frac{1}{h}\Phi_2(x)+\frac{1}{h}\Phi_1(y)}\Bigr|
			=\frac{\OO(1)}{h}\exp\Bigl(-\frac{1}{2Ch} \bigl(t^2+ht^{-\frac{1}{s-1}}\bigr) \Bigr).
		\end{align*}
		In view of \eqref{the minimum}, we conclude \eqref{Schur test estimate eq} with a larger $C>0$, completing the proof.
	\end{proof}
	
	With all there preparations, we are now ready to prove \eqref{reproducing property eq}. We shall first prove a reproducing property with the auxiliary weight $\Phi_1$, that is, 
	\begin{equation}\label{reproducing property with auxiliary weight eq}
		\|(\widetilde{\Pi}_V-1)u \|_{L^2_{\Phi}(V)}=\OO(1)\exp\bigl(C^{-1}h^{-\frac{1}{2s-1}}\bigr) \|u\|_{H_{\Phi_1}(V)}, \quad \mbox{ for any } u\in H_{\Phi_1}(V).
	\end{equation}
	
	To this end, we apply Theorem \ref{thm:weak reproducing} with $u\in H_{\Phi_1}(V)\subset H_{\Phi}(V)$ and 
	\begin{equation*}
		v=\Pi_{\Phi_2}(\widetilde{\Pi}_V-1)u=\Pi_{\Phi_2}\widetilde{\Pi}_Vu-u\in H_{\Phi_2}(V),
	\end{equation*}
	in addition with $\Phi_2$ in place of $\Phi_1$. Then in view of \eqref{boundedness of Pi eq} and \eqref{two weights comparison eq} we obtain
	\begin{align*}
		\bigl((\widetilde{\Pi}_V-1) u, \Pi_{\Phi_2}(\widetilde{\Pi}_V-1)u\bigr)_{L^2_{\Phi}(V)}=& \OO(1)\exp\bigl(-C^{-1}h^{-\frac{1}{2s-1}}\bigr)\,  \|u\|_{H_{\Phi}(V)} \|\Pi_{\Phi_2}(\widetilde{\Pi}_V-1)u\|_{H_{\Phi_2}(V)}\\
		=&\OO(1)\exp\bigl(-C^{-1}h^{-\frac{1}{2s-1}}\bigr)\,  \|u\|_{H_{\Phi_1}(V)}^2.
	\end{align*}
	Note
	\begin{equation*}
		\Pi_{\Phi_2}(\widetilde{\Pi}_V-1)u= (\widetilde{\Pi}_V-1) u+\bigl(\Pi_{\Phi_2} \widetilde{\Pi}_V - \widetilde{\Pi}_V\bigr)u. 
	\end{equation*}
	To achieve \eqref{reproducing property with auxiliary weight eq} it suffices to prove
	\begin{equation*}
		\bigl((\widetilde{\Pi}_V-1) u, (\Pi_{\Phi_2} \widetilde{\Pi}_V - \widetilde{\Pi}_V)u\bigr)_{L^2_{\Phi}(V)}=\OO(1)\exp\bigl(-C^{-1}h^{-\frac{1}{2s-1}}\bigr)\,  \|u\|_{H_{\Phi_1}(V)}^2,
	\end{equation*}
	which follows directly from \eqref{Pi_V is O(1)} and Proposition \ref{Pi_V almost holomorphic Gevrey prop}. Thus \eqref{reproducing property with auxiliary weight eq} is established.
	
	Note the bound \eqref{reproducing property with auxiliary weight eq} is the counterpart statement to the estimate (5.5) in \cite{deleporte2022analytic}. We may therefore repeat the same argument as in \cite[Section 5]{deleporte2022analytic}, using H\"ormander's $L^2$ estimates, to conclude the proof of Theorem \ref{main thm}. Letting $U\Subset W \Subset V$ be neighborhoods of $x_0$ with smooth boundaries, we obtain
	\begin{equation*}
		\|(\widetilde{\Pi}_V-1)u \|_{L^2_{\Phi}(U)}=\OO(1)\exp\bigl(-C^{-1}h^{-\frac{1}{2s-1}}\bigr) \|u\|_{H_{\Phi}(V)}, \quad \mbox{ for any } u\in H_{\Phi}(V).
	\end{equation*} 
	So the proof of Theorem \ref{main thm} is completed.
	
	\appendix
	
	\section{Some basic results about factorials and binomials}\label{Appendix: combinatorial facts}
	In this section we review some basic results concerning factorials and binomials. 
	\begin{lem}
		\label{lem:multiply factorials}
		Suppose that $k_1,\ell_1, k_2,\ell_2 \in \NN$ satisfy $k_1 + \ell_1 = k_2 + \ell_2$, then
		\[
		|k_1 - \ell_1| > |k_2 - \ell_2| \implies k_1!\ell_1! > k_2!\ell_2! 
		\]
	\end{lem}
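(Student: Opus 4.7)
The plan is to reduce immediately to the case $k_1 \geq \ell_1$ and $k_2 \geq \ell_2$: swapping the two entries within a pair changes neither the product $k!\ell!$ nor the quantity $|k-\ell|$, so we may do this without loss of generality. Under this normalization, the hypothesis $|k_1-\ell_1| > |k_2-\ell_2|$ becomes the honest inequality $k_1 - \ell_1 > k_2 - \ell_2 \geq 0$, which, combined with the fixed-sum constraint $k_1 + \ell_1 = k_2 + \ell_2$, forces $k_1 > k_2$ and $\ell_1 < \ell_2$. I will then set $m := k_1 - k_2 = \ell_2 - \ell_1 > 0$, a positive integer that records how far apart the two pairs sit.

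The next step is to expand the ratio of factorials by telescoping, writing
\[
\frac{k_1!\,\ell_1!}{k_2!\,\ell_2!} \;=\; \frac{k_1!/k_2!}{\ell_2!/\ell_1!} \;=\; \frac{\prod_{i=1}^{m}(k_2+i)}{\prod_{i=1}^{m}(\ell_1+i)}.
\]
Both numerator and denominator contain exactly $m$ factors, which is the key convenience of the fixed-sum constraint. A term-by-term comparison then finishes the argument: since $k_2 - \ell_2 \geq 0$ gives $k_2 + 1 > \ell_2 \geq \ell_1 + i$ for every $1 \leq i \leq m$, while $k_2 + i \geq k_2 + 1$, every factor in the numerator strictly exceeds every factor in the denominator. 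Hence the ratio is strictly greater than $1$, as desired.

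There is no real obstacle here; the lemma is essentially a one-line identity after the telescoping. The only point to be careful about is the strictness of the final inequality, which I will ensure by observing that even a single strict comparison among the $m$ paired factors (for instance $k_2 + 1 > \ell_2 \geq \ell_1 + 1$) is enough to make the entire product ratio strictly greater than $1$.
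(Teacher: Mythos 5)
Your proof is correct and follows essentially the same route as the paper's: normalize so that $k_1\geq\ell_1$ and $k_2\geq\ell_2$, set $m=k_1-k_2=\ell_2-\ell_1>0$, and compare the telescoping products $k_1!/k_2!$ and $\ell_2!/\ell_1!$ factor by factor. The only cosmetic difference is that you write the $m$ factors ascending from $k_2+1$ and $\ell_1+1$, while the paper writes them descending from $k_1$ and $\ell_2$; the comparison is the same.
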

	
	\begin{proof}
		We may assume that $k_1\geq \ell_1$, $k_2\geq \ell_2$. If $|k_1 - \ell_1| > |k_2 - \ell_2|$, then $k_1 > k_2 \geq \ell_2 > \ell_1$ due to the assumption $k_1 + \ell_1 = k_2 + \ell_2$. Let $d = k_1 - k_2 = \ell_2 - \ell_1 > 0$, then 
		\[
		\frac{k_1!}{k_2!} = k_1 \cdots (k_1-d+1) > \ell_2 \cdots (\ell_2 -d+1) = \frac{\ell_2!}{\ell_1!}, 
		\]
		which proves our desired result.
	\end{proof}
	
	\begin{corr}
		For any multi-index $\alpha = (\alpha^1,\cdots,\alpha^n)\in\NN^n$, if we denote $|\alpha|=\alpha^1 + \cdots + \alpha^n$, then 		\[
		\alpha! = \alpha^1!\cdots\alpha^n! \leq |\alpha|! .
		\]
	\end{corr}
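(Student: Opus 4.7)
My plan is to deduce this directly as an immediate consequence of the fact that the multinomial coefficient is a positive integer. Specifically, the multinomial coefficient
\[
\binom{|\alpha|}{\alpha^1,\ldots,\alpha^n} = \frac{|\alpha|!}{\alpha^1!\cdots\alpha^n!}
\]
counts the number of ways of partitioning $|\alpha|$ distinguishable objects into $n$ ordered groups of sizes $\alpha^1,\ldots,\alpha^n$, hence is a nonnegative integer, and in fact a positive integer since all the sizes sum to $|\alpha|$. The inequality $\alpha^1!\cdots \alpha^n! \leq |\alpha|!$ then follows by clearing the denominator.

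If one prefers to avoid invoking this combinatorial interpretation, the result can alternatively be obtained by repeated application of Lemma \ref{lem:multiply factorials}: starting from the tuple $(\alpha^1,\ldots,\alpha^n)$ and iteratively transferring units from one coordinate to another while keeping the total $|\alpha|$ fixed, Lemma \ref{lem:multiply factorials} shows that making the tuple more unequal only increases the product of factorials. Hence the product is maximized at the extremal tuple $(|\alpha|,0,\ldots,0)$, yielding the upper bound $|\alpha|!\cdot 0!^{n-1} = |\alpha|!$.

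Either approach is a one-line argument with no real obstacle; the statement is included here only because it is used repeatedly throughout the earlier combinatorial estimates in Section \ref{subsection:Bergman coefficients estimates}. I would opt for the multinomial-coefficient proof for brevity.
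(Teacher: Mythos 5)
Both of your approaches are correct. The paper itself takes the second route you sketch: it applies Lemma \ref{lem:multiply factorials} successively, absorbing one coordinate at a time into a running total,
\[
\alpha^1!\alpha^2!\cdots\alpha^n! \leq 0!\,(\alpha^1+\alpha^2)!\cdots\alpha^n! \leq \cdots \leq \underbrace{0!\cdots 0!}_{n-1}\,(\alpha^1+\cdots+\alpha^n)! = |\alpha|!,
\]
and the corollary is stated as a corollary precisely because it is deduced from that lemma. Your preferred argument, via integrality of the multinomial coefficient $\binom{|\alpha|}{\alpha^1,\ldots,\alpha^n}$, is a genuinely different and more self-contained route: it bypasses Lemma \ref{lem:multiply factorials} entirely and is arguably cleaner for a reader. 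The only thing it buys less of is consistency with the surrounding text, since Lemma \ref{lem:multiply factorials} and its other consequence, Corollary \ref{cor:factorial product max}, are used elsewhere in the paper anyway; but as a standalone proof of this inequality it is perfectly adequate. One small caution on your second sketch: Lemma \ref{lem:multiply factorials} is a statement about pairs, so the phrase ``transferring units between coordinates'' should, if written out, be implemented exactly as the paper does — two coordinates at a time, merging one into an accumulator — rather than appealing informally to a monotonicity-in-unevenness principle for $n$-tuples. There is no gap; both arguments close.
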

	
	\noindent
	\emph{Proof.} We can apply Lemma \ref{lem:multiply factorials} successively to see that 
	\[
	\qquad\;\alpha^1!\alpha^2!\cdots\alpha^n! \leq 0!(\alpha^1+\alpha^2)!\cdots\alpha^n! \leq \cdots \leq \underbrace{0!\cdots 0!}_{n-1} (\alpha^1+\cdots\alpha^n)! = |\alpha|!. \qquad\,\, \qed
	\]
	
	\begin{corr}\label{cor:factorial product max}
		Suppose that $m_1,\ldots,m_k\in \NN$ satisfy $m_j\geq a$, $j=1,\ldots,k$, for some $a\in\NN$. Writing $M=m_1+\cdots m_k$, then we have
		\[
		m_1!\cdots m_k! \leq a!^{k-1} (M - a(k-1))!
		\]
	\end{corr}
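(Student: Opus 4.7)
The plan is to prove Corollary \ref{cor:factorial product max} by induction on $k$, using Lemma \ref{lem:multiply factorials} as the driver. The base case $k = 1$ is trivial: both sides reduce to $m_1!$ since $M - a(k-1) = m_1$.

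For the inductive step with $k \geq 2$, I would relabel so that $m_1 = \min_j m_j$. If $m_1 = a$, the conclusion is immediate: factoring out $m_1! = a!$ and applying the inductive hypothesis to the $(k-1)$-tuple $(m_2, \ldots, m_k)$, whose sum is $M - a$ and whose entries are each $\geq a$, gives $m_2! \cdots m_k! \leq a!^{k-2}(M - a - a(k-2))! = a!^{k-2}(M - a(k-1))!$, so multiplying by $a!$ yields the desired bound.

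The heart of the argument is to reduce the case $m_1 > a$ to the case $m_1 = a$. Since $m_2 \geq m_1 > a$, I would apply Lemma \ref{lem:multiply factorials} to the pair $(m_1, m_2)$, replacing it by $(m_1 - 1, m_2 + 1)$: this preserves the sum $m_1 + m_2$ while strictly increasing the spread from $m_2 - m_1$ to $m_2 - m_1 + 2$, so the lemma yields $(m_1 - 1)!(m_2 + 1)! \geq m_1! m_2!$ and hence weakly increases the product $m_1! \cdots m_k!$. Iterating this swap exactly $m_1 - a$ times drives the first entry down to $a$, keeps every entry $\geq a$, and leaves the total sum unchanged, yielding
\[
m_1! m_2! \cdots m_k! \leq a!\,(m_1 + m_2 - a)!\,m_3! \cdots m_k!.
\]
The inductive hypothesis applied to the $(k-1)$-tuple $(m_1 + m_2 - a,\, m_3, \ldots, m_k)$ then finishes the proof.

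I do not anticipate any genuine obstacle: the only bookkeeping is to note that the swap never pushes the first entry below $a$ before the iteration terminates, and that the hypothesis of Lemma \ref{lem:multiply factorials} is automatically satisfied because $m_2 \geq m_1$ forces the spread to grow strictly under each swap.
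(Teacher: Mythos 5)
Your proof is correct and essentially mirrors the paper's argument: both proceed by iteratively extracting factors of $a!$ via Lemma~\ref{lem:multiply factorials}, noting that pairing any $m_i,m_j\geq a$ into $(a,\,m_i+m_j-a)$ preserves the sum while widening the spread. The only cosmetic difference is that you phrase the reduction as an explicit induction on $k$ driven by unit-transfer swaps $(m_1,m_2)\mapsto(m_1-1,m_2+1)$, whereas the paper applies the lemma in a single jump per pairing, chaining $m_1!m_k!\leq a!(m_1+m_k-a)!$, then $m_2!(m_1+m_k-a)!\leq a!(m_1+m_2+m_k-2a)!$, and so on.
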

	
	\begin{proof}
		We may assume that $a\leq m_1\leq \cdots\leq m_k$. Applying Lemma \ref{lem:multiply factorials} successively, we get $m_1!m_k!\leq a!(m_1+m_k-a)!$, $m_2!(m_1+m_k-a)!\leq a!(m_1+m_2+m_k-2a)!$, \ldots, $m_{k-1}!(m_1+\cdots+m_{k-2}+m_k-(k-2)a)! \leq a!(M-(k-1)a)!$. Therefore, $m_1!\cdots m_k! \leq \underbrace{a!\cdots a!}_{k-1} (M - a(k-1))! = a!^{k-1} (M - a(k-1))!$.
	\end{proof}
	
	\begin{prop}
		\label{prop:partition multi-index counting}
		For any multi-index $\alpha\in\NN^n$ and positive integer $m$, let
		\[
		P(\alpha,m) := \left\{ \{\alpha_j\}_{j=1}^m : \alpha_1 + \cdots + \alpha_m = \alpha,\ \ \alpha_j\in\NN^n,\ 1\leq j\leq m \right\}.
		\]
		Then 
		\[
		\# P(\alpha,m) = \binom{\alpha + (m-1)\mathbbm{1}}{\alpha} = \prod_{k=1}^n \binom{\alpha^k + m-1}{\alpha^k} .
		\]
		where $\mathbbm{1}=(1, \cdots, 1)$ and $\alpha = (\alpha^1,\cdots,\alpha^n)$. In particular, $\# P(\alpha,m)\leq 2^{|\alpha|+n(m-1)}$.
	\end{prop}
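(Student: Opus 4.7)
The plan is to reduce the counting to a product of one-dimensional stars-and-bars problems. Writing each multi-index $\alpha_j = (\alpha_j^1,\ldots,\alpha_j^n)$ componentwise, the constraint $\alpha_1+\cdots+\alpha_m = \alpha$ in $\NN^n$ decouples into $n$ independent scalar constraints $\alpha_1^k+\cdots+\alpha_m^k = \alpha^k$ for $k=1,\ldots,n$. This gives a natural bijection
\[
P(\alpha,m) \;\longleftrightarrow\; \prod_{k=1}^n P(\alpha^k, m),
\]
where $P(N,m)$ on the right denotes the set of ordered $m$-tuples of nonnegative integers summing to $N$. Hence $\# P(\alpha,m) = \prod_{k=1}^n \# P(\alpha^k, m)$, and it suffices to evaluate $\# P(N,m)$ for $N\in\NN$.

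Next I would apply the classical stars-and-bars argument: the ordered partitions of $N$ into $m$ nonnegative parts are in bijection with the ways of placing $m-1$ ``bars'' among $N$ ``stars'' in a row of length $N+m-1$, giving $\# P(N,m) = \binom{N+m-1}{N}$. Substituting this into the product over $k$ yields
\[
\# P(\alpha,m) = \prod_{k=1}^n \binom{\alpha^k + m-1}{\alpha^k},
\]
which is exactly $\binom{\alpha+(m-1)\mathbbm{1}}{\alpha}$ in the notation of the statement.

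For the final bound, I would invoke the elementary inequality $\binom{a+b}{a}\leq 2^{a+b}$ (immediate from the binomial theorem applied to $(1+1)^{a+b}$) to estimate each factor by $2^{\alpha^k + m-1}$, and then multiply across $k=1,\ldots,n$:
\[
\# P(\alpha,m) \leq \prod_{k=1}^n 2^{\alpha^k + m-1} = 2^{|\alpha| + n(m-1)} .
\]
There is no real obstacle here; the statement is purely combinatorial and each step is routine. The only minor point to be careful about is making the componentwise bijection explicit so that the stars-and-bars formula can be applied independently in each of the $n$ coordinates.
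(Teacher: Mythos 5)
Your proof is correct and follows the same route as the paper: decompose the multi-index constraint into $n$ independent scalar constraints, apply the stars-and-bars identity $\#\{l_1,\ldots,l_m\in\NN:\sum l_j=a\}=\binom{a+m-1}{a}$ in each coordinate, and bound each factor by $2^{\alpha^k+m-1}$. No meaningful difference from the paper's argument.
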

	
	\begin{proof}
		Writing $\alpha_j = (\alpha_j^1,\cdots,\alpha_j^n)\in\NN^n$, $1\leq j\leq m$, we have
		\[
		\{\alpha_j\}_{j=1}^m \in P(\alpha,m) \iff \alpha_1^k + \cdots + \alpha_m^k = \alpha^k,\quad 1\leq k\leq n.
		\]
		The desired result then follows from the basic counting identity: 
		\begin{equation}
			\label{partition of integer}
			\#\{ l_1,\ldots,l_m \in\NN : l_1 + \cdots + l_m = a\} = \binom{a +m-1}{a} .
		\end{equation}
	\end{proof}
	
	\begin{lem}
		Suppose that $N\in\NN$, $h>0$ and $C>0$, then we have
		\begin{equation}
			\label{basic inequality 1}
			\forall t>0,\quad e^{-\frac{C t^2}{2h}}t^{N}  \leq C^{-N/2} (N!)^{1/2} h^{N/2}
		\end{equation}
	\end{lem}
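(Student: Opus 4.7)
The plan is to prove the inequality by finding the maximum of the left-hand side over $t>0$ and then bounding this maximum using a standard consequence of Stirling's approximation.

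First, I would differentiate the function $F(t) := e^{-Ct^2/(2h)} t^N$, or equivalently study $\log F(t) = -Ct^2/(2h) + N \log t$. Setting the derivative equal to zero gives $-Ct/h + N/t = 0$, so the unique critical point on $(0,\infty)$ is $t_* = \sqrt{Nh/C}$, and this is clearly a maximum. Substituting back, I would compute
\begin{equation*}
\max_{t>0} F(t) = F(t_*) = e^{-N/2} \left(\frac{Nh}{C}\right)^{N/2} = C^{-N/2} h^{N/2} \left(\frac{N}{e}\right)^{N/2} .
\end{equation*}

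Next, I would invoke the elementary lower bound $N! \geq (N/e)^N$, which follows at once from the series expansion $e^N = \sum_{k\geq 0} N^k/k! \geq N^N/N!$. Taking square roots yields $(N/e)^{N/2} \leq (N!)^{1/2}$, and combining this with the computation above gives
\begin{equation*}
e^{-Ct^2/(2h)} t^N \leq C^{-N/2} h^{N/2} (N!)^{1/2},\quad t>0,
\end{equation*}
which is exactly \eqref{basic inequality 1}.

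There is no real obstacle: this is a textbook optimization combined with the Stirling-type bound $N! \geq (N/e)^N$. The only minor point to be careful about is the case $N=0$, where both sides equal $1$ (the maximum of $F(t)=e^{-Ct^2/(2h)}$ is attained at $t=0$), so the inequality holds trivially.
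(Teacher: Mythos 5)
Your proof is correct and takes essentially the same approach as the paper: find the critical point $t_* = \sqrt{Nh/C}$, evaluate the maximum $C^{-N/2}h^{N/2}(N/e)^{N/2}$, and bound $(N/e)^{N/2}\le (N!)^{1/2}$. The only cosmetic difference is that you derive $N!\ge(N/e)^N$ from the exponential series $e^N\ge N^N/N!$, whereas the paper cites Stirling's approximation; both give the same bound.
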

	
	\begin{proof}
		Let $f(t) = e^{-\frac{C t^2}{2h}}t^{N}$, then we compute
		\[
		f'(t) = \left( \frac{N}{t} - \frac{Ct}{h} \right) e^{-\frac{C t^2}{2h}}t^{N} = (N - Ch^{-1}t^2) e^{-\frac{C t^2}{2h}} t^{N-1}.
		\]
		It follows that
		\[
		t\lessgtr (Nh/C)^{1/2} \implies f'(t)\gtrless 0,
		\]
		thus $f(t)$ has a maximum on $t\in(0,\infty)$, which is attained at $t_c := (Nh/C)^{1/2}$, i.e.,
		\[
		f(t)\leq f(t_c) = C^{-N/2} (N/e)^{N/2} h^{N/2} \leq C^{-N/2} (N!)^{1/2} h^{N/2},
		\]
		where we used the fact that $N!\geq (N/e)^N$ as a result of the following version of Stirling's approximation:
		\begin{equation}
			\label{Stirling apprx}
			\sqrt{2\pi N} \left( \frac{N}{e} \right)^N e^{\frac{1}{12N + 1}} < N! < \sqrt{2\pi N} \left( \frac{N}{e} \right)^N e^{\frac{1}{12N}},
		\end{equation}
		which holds for all $N\geq 1$.
	\end{proof}
	
	\begin{lem}
		\label{lem:Gevrey minimize}
		Suppose that $C, h>0$, $\sigma\geq 1$, and let $a_k = C^{k} k!^{\sigma} h^k$, $k\in \mathbb{N}$, then
		\begin{equation}
			\label{seq a_k decrease increase}
			a_k \leq a_{k-1}\quad \text{if }k\leq [(Ch)^{-\frac{1}{\sigma}} ]; \quad a_k\geq a_{k-1} \quad \text{if }k > [ (Ch)^{-\frac{1}{\sigma}} ],
		\end{equation}
		where $[x]$ denotes the greatest integer less than or equal to $x$. Furthermore, there exists $h_0>0$ such that for all $0<h<h_0$,
		\begin{equation}
			\label{basic inequality 2}
			e^{-\sigma(Ch)^{-\frac{1}{\sigma}}}\leq \min_{k\in\NN} a_k \leq e^{2\sigma} e^{-\frac{3}{4} \sigma(Ch)^{-\frac{1}{\sigma}}}.
		\end{equation}
	\end{lem}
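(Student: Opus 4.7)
The first claim \eqref{seq a_k decrease increase} follows immediately from the ratio computation
\begin{equation*}
\frac{a_k}{a_{k-1}} = Ch\, k^\sigma,
\end{equation*}
which is $\leq 1$ precisely when $k^\sigma \leq (Ch)^{-1}$, i.e., when $k \leq (Ch)^{-1/\sigma}$; passing to integers, the crossover occurs at $k = [(Ch)^{-1/\sigma}]$. Consequently the sequence $\{a_k\}$ is unimodal with minimum attained at $N := [(Ch)^{-1/\sigma}]$, so $\min_k a_k = a_N = (Ch)^N N!^\sigma$.

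For the two-sided estimate \eqref{basic inequality 2}, write $x := (Ch)^{-1/\sigma}$, so that $x - 1 < N \leq x$, and restrict to $0 < h < h_0$ with $h_0$ small enough that $x \geq 1$ (hence $N \geq 1$). Applying Stirling's formula \eqref{Stirling apprx} to each factor of $N!^\sigma$, together with $Ch = x^{-\sigma}$, I obtain
\begin{equation*}
(2\pi N)^{\sigma/2} \Bigl(\frac{N}{ex}\Bigr)^{\sigma N} \leq a_N \leq e^{\sigma/(12N)}\,(2\pi N)^{\sigma/2} \Bigl(\frac{N}{ex}\Bigr)^{\sigma N}.
\end{equation*}

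For the lower bound, the plan is to set $y = N/x \in (0, 1]$ and reduce to the elementary inequality $y \log y \geq y - 1$ on $(0,1]$, which follows by noting that $f(y) := y\log y - y + 1$ satisfies $f(1) = 0$ and $f'(y) = \log y \leq 0$ there, whence $f \geq 0$. This gives $(N/(ex))^{\sigma N} = e^{\sigma x\,y(\log y - 1)} \geq e^{-\sigma x}$, and combined with $(2\pi N)^{\sigma/2} \geq 1$ I conclude $a_N \geq e^{-\sigma x} = e^{-\sigma(Ch)^{-1/\sigma}}$, as required.

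For the upper bound, I would exploit the two-sided control $x - 1 < N \leq x$: the factor $(N/(ex))^{\sigma N} \leq e^{-\sigma N} \leq e^{\sigma} e^{-\sigma x}$, the Stirling correction $e^{\sigma/(12N)} \leq e^{\sigma/12}$ for $N \geq 1$, and the polynomial prefactor $(2\pi N)^{\sigma/2} \leq (2\pi x)^{\sigma/2}$. Shrinking $h_0$ further so that $(2\pi x)^{\sigma/2} \leq e^{\sigma x / 4}$ (valid for all sufficiently large $x$), these combine to give $a_N \leq e^{\sigma + \sigma/12 + \sigma x/4 - \sigma x} \leq e^{2\sigma} e^{-3\sigma x / 4}$. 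No step poses any substantial obstacle; the only mildly subtle point is that the factor $3/4$ rather than $1$, and the constant $e^{2\sigma}$, appear in \eqref{basic inequality 2} precisely to absorb this polynomial prefactor and the Stirling correction into a (slightly degraded) exponential decay rate.
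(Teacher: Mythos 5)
Your proof is correct and follows essentially the same route as the paper: identify $N=[(Ch)^{-1/\sigma}]$ as the minimizer via the ratio $a_k/a_{k-1}=Ch\,k^\sigma$, invoke Stirling's two-sided bound for $N!$, and then absorb the polynomial and correction factors into the exponential using $\log t \leq t/2$. The only cosmetic difference is in the lower bound, where you substitute $y=N/x$ and use $y\log y\geq y-1$ on $(0,1]$, whereas the paper manipulates $\log\bigl((Ch)^{1/\sigma}k_c\bigr)$ directly via $\log(1+1/k_c)\leq 1/k_c$; these are equivalent.
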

	
	\begin{proof}
		Let us justify \eqref{seq a_k decrease increase} directly, noting that $a_k/a_{k-1} = Ch k^{\sigma}$,
		\[
		k \leq [ (Ch)^{-\frac{1}{\sigma}} ] \implies k \leq (Ch)^{-\frac{1}{\sigma}} \iff a_k/a_{k-1} \leq 1; 
		\]
		\[
		k > [(Ch)^{-\frac{1}{\sigma}} ] \implies k > (Ch)^{-\frac{1}{\sigma}} \iff a_k/a_{k-1} > 1.
		\]
		It follows that the minimum  $\min_{k\in\NN} a_k$ is attained at $k = k_c := [ (Ch)^{-\frac{1}{\sigma}} ]$, for which we can compute, in view of \eqref{Stirling apprx},
		\[
		\begin{split}
			\log a_{k_c} &= k_c\log(Ch) + \sigma\log(k_c !) \\
			&> k_c\log(Ch) + \sigma\left( k_c\log k_c - k_c + \frac{1}{2}\log k_c \right) \\
			&= \sigma\left( k_c\log\big( (Ch)^{\frac{1}{\sigma}}k_c\big) - k_c + \frac{1}{2}\log k_c \right).
		\end{split}
		\]
		Recalling that $(Ch)^{-\frac{1}{\sigma}} < k_c + 1$, and using the inequality $\log(1+x)\leq x$ for all $x\geq 0$, we get
		\[
		\log\big( (Ch)^{\frac{1}{\sigma}}k_c\big) > \log\left( \frac{k_c}{k_c + 1}\right) = -\log (1+k_c^{-1}) \geq -k_c^{-1} .
		\]
		It then follows that, if $(Ch)^{-\frac{1}{\sigma}} \geq e^2 + 1$ (thus $k_c>e^2$),
		\[
		\log a_{k_c} > \sigma\left( -1 - k_c + \frac{1}{2}\log k_c \right) \geq -\sigma(Ch)^{-\frac{1}{\sigma}}.
		\]
		This gives the lower bound in \eqref{basic inequality 2} if we set $(Ch_0)^{-\frac{1}{\sigma}} = e^2 + 1$. It remains to justify the upper bound in \eqref{basic inequality 2}, again by \eqref{Stirling apprx} we have
		\[
		\begin{split}
			\log a_{k_c} &< \sigma\bigg( k_c\log\big( (Ch)^{\frac{1}{\sigma}}k_c\big) - k_c + \frac{1}{2}\log k_c + \log(\sqrt{2\pi}) + \frac{1}{12k_c} \bigg) \\
			&< \sigma \bigg( - (Ch)^{-\frac{1}{\sigma}} +\frac{1}{2}\log \big((Ch)^{-\frac{1}{\sigma}}\big) + 2\bigg),
		\end{split}
		\]
		where we used $(Ch)^{-\frac{1}{\sigma}}-1<k_c\leq (Ch)^{-\frac{1}{\sigma}}$. The upper bound in \eqref{basic inequality 2} then follows by recalling the basic inequality $\log x < x/2$, $x>0$.
	\end{proof}
	
	\section{From asymptotic to exact Bergman projections: proof of Corollary \ref{corr: asymp to exact Bergman}}\label{Appendix: prove corollary}
	
	This section is devoted to the proof of Corollary \ref{corr: asymp to exact Bergman}, showing that the operator $\widetilde{\Pi}_V$ defined in \eqref{Pi_V eq} with $V$ being a small neighborhood of $x_0\in\Omega$, enjoying the local reproducing property \eqref{reproducing property eq}, gives an approximation for the orthogonal projection $\Pi: L^2(\Omega,e^{-2\Phi/h}L(dx)) \to H_\Phi(\Omega)$, up to a $\GG^s$--type small error of the same form as in \eqref{reproducing property eq}, locally near $x_0\in\Omega$. The arguments presented below are essentially well-known and follow \cite{BBSj08} closely. See also \cite[Appendix A]{deleporte2022analytic} for the corresponding discussion in the case of real analytic weights, and \cite[Section 5]{hitrik2022smooth} for the case of smooth weights.
	
	Our starting point is to deduce pointwise estimates from the weighted $L^2$ estimates \eqref{reproducing property eq} in Theorem \ref{main thm}, using Proposition \ref{prop:L2 to pointwise}. We shall apply Proposition \ref{prop:L2 to pointwise} to a function of the form
	\begin{equation*}
		f = \widetilde{\Pi}_V u - u,\quad u\in H_\Phi(V),
	\end{equation*}
	satisfying (in view of \eqref{reproducing property eq})
	\begin{equation}\label{appendix L2 estimate f}
		\|f\|_{L_\Phi^2(U)} = \OO(1)\exp\bigl(-C^{-1}h^{-\frac{1}{2s-1}}\bigr) \|u\|_{H_{\Phi}(V)} .
	\end{equation} 
	Arguing as in the proof of Proposition \ref{Pi_V almost holomorphic Gevrey prop}, using the $\overline{\partial}$--estimates \eqref{amplitude dbar estimate} and \eqref{Psi almost holomorphic Gevrey}, with the help of \eqref{Psi basic estimate}, we obtain 
	\begin{equation}\label{appendix dbar x estimate of K 1}
		\Bigl|\partial_{\oo{x}}\Bigl(e^{\frac{2}{h}\Psi(x,\overline{y})} a(x,\overline{y};h)\Bigr) e^{-\frac{1}{h}\Phi(x)-\frac{1}{h}\Phi(y)}\Bigr| = \OO(1)\exp\bigl(-C^{-1}h^{-\frac{1}{2s-1}}\bigr) .
	\end{equation}
	Recalling the expression \eqref{Pi_V eq}, we then get by the Cauchy--Schwarz inequality,
	\begin{equation}
		\label{appendix dbar estimate f}
		\left|h\overline{\partial} f (x)\right| = \OO(1)\exp\bigl(-C^{-1}h^{-\frac{1}{2s-1}}\bigr) e^{\frac{\Phi(x)}{h}} \|u\|_{H_\Phi(V)},\quad x\in U
	\end{equation}
	Letting $\widetilde{U}\Subset U$ be an open neighborhood of $x_0$, we conclude from Proposition \ref{prop:L2 to pointwise} and \eqref{appendix L2 estimate f}, \eqref{appendix dbar estimate f} that
	\[
	\left|\widetilde{\Pi}_V u(x) - u(x)\right| = \OO(1)\exp\bigl(-C^{-1}h^{-\frac{1}{2s-1}}\bigr) e^{\frac{\Phi(x)}{h}} \|u\|_{H_\Phi(V)},\quad x\in \widetilde{U}.
	\]
	Let us rewrite the above inequality in the following form
	\begin{equation}
		\label{appendix local reproducing}
		\begin{split}
			u(x) = &\frac{1}{h^n} \int_{V} e^{\frac{2}{h}\Psi(x,\overline{y})} a(x,\overline{y};h) u(y) e^{-\frac{2}{h}\Phi(y)}\, L(dy) \\
			& + \OO(1)\exp\bigl(-C^{-1}h^{-\frac{1}{2s-1}}\bigr) e^{\frac{\Phi(x)}{h}} \|u\|_{H_\Phi(V)}
		\end{split},\quad x\in \widetilde{U},\ \; u\in H_\Phi(V),
	\end{equation}
	which is often called the local reproducing property. We shall apply \eqref{appendix local reproducing} to $u\in H_\Phi(\Omega)$. To this end, let us recall from \cite[Proposition 2.3]{deleporte2022analytic} that
	\begin{equation}
		\label{appendix u(x)}
		|u(x)|= \OO(1)h^{-n} e^{\frac{\Phi(x)}{h}} \|u\|_{H_\Phi(\Omega)},\quad x\in V.
	\end{equation}
	Let $\chi\in\CIc(V;[0,1])$ be such that $\chi=1$ near $\overline{U}$. In view of \eqref{appendix u(x)} and \eqref{Psi basic estimate}, we see that
	\begin{equation}\label{appendix 1-chi}
		\begin{gathered}
			\left| \frac{1}{h^n}\int_{V} e^{\frac{2}{h}\Psi(x,\overline{y})} (1-\chi(y)) a(x,\overline{y};h) u(y) e^{-\frac{2}{h}\Phi(y)}\, L(dy) \right|  \\
			\leq \OO(1)e^{-\frac{1}{Ch}} e^{\frac{\Phi(x)}{h}} \|u\|_{H_\Phi(\Omega)} ,\quad x\in\widetilde{U} .
		\end{gathered} 
	\end{equation}
	Combining \eqref{appendix local reproducing} and \eqref{appendix 1-chi}, we obtain, given $u\in H_\Phi(\Omega)$,
	\begin{equation}\label{appendix local reproducing Omega}
		\begin{split}
			u(y) = & \int_\Omega \widetilde{K}(y,\zbar)\chi(z)u(z)e^{-\frac{2}{h}\Phi(z)} L(dz) \\
			&+ \OO(1)\exp\bigl(-C^{-1}h^{-\frac{1}{2s-1}}\bigr) e^{\frac{\Phi(y)}{h}} \|u\|_{H_\Phi(\Omega)}
		\end{split},\quad y\in\widetilde{U},
	\end{equation}
	where we denote by 
	\begin{equation}
		\label{appendix tilde K}
		\widetilde{K}(y,\zbar) = \frac{1}{h^n} e^{\frac{2}{h}\Psi(y,\zbar)} a(y,\zbar;h),\quad (y,z)\in V\times V.
	\end{equation}
	Following \cite{BBSj08}, \cite[Appendix A]{deleporte2022analytic} and \cite[Section 5]{hitrik2022smooth}, we shall apply \eqref{appendix local reproducing Omega} to the function $y\mapsto K(y,\xbar) = \overline{K(x,\ybar)}\in H_\Phi(\Omega)$, where $K(x,\ybar)e^{-\frac{2}{h}\Phi(y)}$ is the Schwartz kernel of the Bergman projection $\Pi: L^2(\Omega, e^{-2\Phi/h}L(dx))\to H_\Phi(\Omega)$. For the properties of $K(x,\ybar)$, we refer the reader to \cite[Appendix A]{deleporte2022analytic} and the references given there. We obtain therefore 
	\begin{equation}
		\label{appendix K(y,xbar)}
		\begin{split}
			K(y,\xbar) = & \int_\Omega \widetilde{K}(y,\zbar)\chi(z)K(z,\xbar) e^{-\frac{2}{h}\Phi(z)} L(dz) \\
			&+ \OO(1)\exp\bigl(-C^{-1}h^{-\frac{1}{2s-1}}\bigr) e^{\frac{\Phi(x)+\Phi(y)}{h}} 
		\end{split},\quad x, y\in\widetilde{U}.
	\end{equation}
	Taking the complex conjugates, in view of the Hermitian property $K(x,\ybar) = \overline{K(y,\xbar)}$, and using the following expression for the Bergman projection $\Pi$:
	\[
	\Pi u(x) = \int_\Omega K(x,\ybar)u(y)e^{-\frac{2}{h}\Phi(y)} \,L(dy),\quad u\in L^2(\Omega,e^{-2\Phi/h}L(dx)),
	\]
	we can rewrite \eqref{appendix K(y,xbar)} as follows,
	\begin{equation}
		\label{appendix K(x,ybar)}
		K(x,\ybar) = \Pi\bigl(\widehat{K}(\cdot,\ybar)\chi\bigr)(x) +  \OO(1)\exp\bigl(-C^{-1}h^{-\frac{1}{2s-1}}\bigr)e^{\frac{\Phi(x)+\Phi(y)}{h}}  ,\quad x, y\in\widetilde{U} ,
	\end{equation}
	where, in view of \eqref{appendix tilde K} and \eqref{Psi as polarization Gevrey},
	\begin{equation}
		\label{appendix Hat K}
		\begin{gathered}
			\widehat{K}(z,\ybar) = \overline{\widetilde{K}(y,\zbar)} = \frac{1}{h^n} e^{\frac{2}{h}\widehat{\Psi}(z,\ybar)} b(z,\ybar;h), \\
			\widehat{\Psi}(z,\ybar) := \overline{\Psi(y,\zbar)},\quad  b(z,\ybar;h) := \overline{a(y,\zbar;h)} .  
		\end{gathered}
	\end{equation}
	We aim to show that $\Pi\bigl(\widehat{K}(\cdot,\ybar)\chi\bigr)(x)$ is close to $\widehat{K}(x,\ybar)\chi(x) = \widehat{K}(x,\ybar)$ for $x, y\in \widetilde{U}$ up to an error of the same size as in \eqref{appendix K(x,ybar)}. For this purpose, we consider the function
	\[
	\Omega\owns x\mapsto u_y(x) = \widehat{K}(x,\ybar)\chi(x) - \Pi\bigl(\widehat{K}(\cdot,\ybar)\chi\bigr)(x),
	\]
	which solves the following $\overline{\partial}$--equation in $\Omega$,
	\[
	\overline{\partial} u_y = \overline{\partial} \bigl(\widehat{K}(\cdot,\ybar)\chi\bigr) = \chi \overline{\partial}\widehat{K}(\cdot,\ybar) + \widehat{K}(\cdot,\ybar)\overline{\partial}\chi .
	\]
	Applying H\"ormander's $L^2$--estimate to this $\overline{\partial}$--equation on the pseudoconvex domain $\Omega$ with the plurisubharmonic function $\Phi$, see \cite[Proposition 4.2.5]{hormander1994convexity}, we obtain
	\begin{equation}
		\label{appendix u_y L2 integral}
		\int_\Omega |u_y(x)|^2 e^{-\frac{2}{h}\Phi(x)} L(dx) = \OO(h) \int_\Omega \frac{1}{c(x)} |\overline{\partial}_x(\widehat{K}(x,\ybar)\chi(x))|^2  e^{-\frac{2}{h}\Phi(x)} L(dx),
	\end{equation}
	uniform in $y\in\widetilde{U}$, with $c(x)$ given in \eqref{Phi strict plurisubharmonic}. We note that
	\[
	|\overline{\partial}_x(\widehat{K}(x,\ybar)\chi(x))|^2 \asymp |\overline{\partial}\chi(x)|^2 |\widehat{K}(x,\ybar)|^2 + |\chi(x)|^2 |\overline{\partial}_x \widehat{K}(x,\ybar)|^2
	\]
	In view of \eqref{appendix Hat K}, \eqref{Psi basic estimate} and the fact that $\supp \overline{\partial}\chi \subset V\setminus U$, we get
	\begin{equation}
		\label{appendix D chi part}
		\int_\Omega \frac{1}{c(x)} |\overline{\partial}\chi(x)|^2 |\widehat{K}(x,\ybar)|^2 e^{-\frac{2}{h}\Phi(x)} L(dx) = \OO(1)e^{-C^{-1}h^{-1}} e^{\frac{2}{h}\Phi(y)},\quad y\in\widetilde{U}.
	\end{equation}
	Similar as \eqref{appendix dbar x estimate of K 1}, we also have, using \eqref{amplitude dbar estimate}, \eqref{Psi almost holomorphic Gevrey} and \eqref{Psi basic estimate},
	\begin{equation}\label{appendix dbar x estimate of K 2}
		\Bigl|\partial_{\oo{x}}\Bigl(e^{\frac{2}{h}\overline{\Psi(y,\xbar)}} \overline{a(y,\overline{x};h)}\Bigr) e^{-\frac{1}{h}\Phi(x)-\frac{1}{h}\Phi(y)}\Bigr| = \OO(1)\exp\bigl(-C^{-1}h^{-\frac{1}{2s-1}}\bigr) .
	\end{equation}
	We omit again the details and refer to the proof of Proposition \ref{Pi_V almost holomorphic Gevrey prop}. Then by \eqref{appendix Hat K},
	\begin{equation}
		\label{appendix chi part}
		\int_\Omega \frac{1}{c(x)} |\chi(x)|^2 |\overline{\partial}_x \widehat{K}(x,\ybar)|^2 e^{-\frac{2}{h}\Phi(x)} L(dx) = \OO(1)e^{-C^{-1}h^{-\frac{1}{2s-1}}} e^{\frac{2}{h}\Phi(y)},\quad y\in\widetilde{U}.
	\end{equation} 
	Combining \eqref{appendix u_y L2 integral}, \eqref{appendix D chi part} and \eqref{appendix chi part}, we obtain
	\begin{equation}
		\label{appendix u_y L2 norm}
		\|u_y\|_{L^2(\Omega, e^{-2\Phi/h}L(dx))} = \OO(1)\exp\bigl(-C^{-1}h^{-\frac{1}{2s-1}}\bigr) e^{\frac{\Phi(y)}{h}},\quad y\in\widetilde{U} .
	\end{equation}
	To pass from \eqref{appendix u_y L2 norm} to a pointwise estimate, we shall apply again Proposition \ref{prop:L2 to pointwise} to $u_y$. Let us first note that
	\[
	h\overline{\partial}_z u_y(z) = h\overline{\partial}_z \widehat{K}(z,\ybar),\quad z\in U,\ y\in\widetilde{U}.
	\]
	It then follows from \eqref{appendix Hat K} and \eqref{appendix dbar x estimate of K 2} that
	\begin{equation}
		\label{appendix dbar u_y pointwise}
		\sup_{z\in U} |	h\overline{\partial}_z u_y(z)|e^{-\frac{1}{h}\Phi(z)} = \OO(1)\exp\bigl(-C^{-1}h^{-\frac{1}{2s-1}}\bigr) e^{\frac{\Phi(y)}{h}},\quad y\in\widetilde{U} .
	\end{equation}
	We conclude from \eqref{L2 and dbar to pointwise}, \eqref{appendix u_y L2 norm} and \eqref{appendix dbar u_y pointwise} that, uniformly for $x, y\in\widetilde{U}$,
	\begin{equation}
		\label{appendix u_y pointwise}
		|u_y(x)| = \left| \widehat{K}(x,\ybar)\chi(x) - \Pi\bigl(\widehat{K}(\cdot,\ybar)\chi\bigr)(x) \right| = \OO(1)\exp\bigl(-C^{-1}h^{-\frac{1}{2s-1}}\bigr) e^{\frac{\Phi(x)+\Phi(y)}{h}} .
	\end{equation}
	Combining \eqref{appendix K(x,ybar)} and \eqref{appendix u_y pointwise}, also recalling \eqref{appendix Hat K}, we obtain
	\begin{equation*}
		K(x,\ybar) = \overline{\widetilde{K}(y,\xbar)} + \OO(1)\exp\bigl(-C^{-1}h^{-\frac{1}{2s-1}}\bigr) e^{\frac{\Phi(x)+\Phi(y)}{h}},\quad x, y\in\widetilde{U},
	\end{equation*}
	taking the complex conjugates and using the Hermitian property of $K$, we get
	\begin{equation}\label{appendix tilde K to K}
		K(y,\xbar) = \widetilde{K}(y,\xbar) + \OO(1)\exp\bigl(-C^{-1}h^{-\frac{1}{2s-1}}\bigr) e^{\frac{\Phi(x)+\Phi(y)}{h}},\quad x, y\in\widetilde{U} .
	\end{equation}
	Switching the variables $x$ and $y$ in \eqref{appendix tilde K to K}, we complete the proof of Corollary \ref{corr: asymp to exact Bergman}.

\end{document}